\newtheorem{thm}{Theorem}   
\newtheorem{rem}{Remark}   
\newtheorem{lem}{Lemma}   
\newtheorem{prop}{Proposition}   
\newtheorem{cor}{Corollary}   
\newtheorem{OP}{Open Problem}    
\newtheorem{defi}{Definition} 
\title{Nonlinear Operator Ideals Between Metric Spaces and Banach Spaces $$\underline{\mathcal{PART}\  \textrm{I}}$$}
\date{}
\author{MANAF ADNAN SALEH SALEH \\ Mathematisches Institut, Universit\"at Jena \\ Ernst$-$Abbe$-$Platz 2, 07743 Jena, Germany\\ E$-$mail: manaf$_{-}$math@yahoo.com}
\begin{document}

\maketitle

\begin{abstract}

In this paper we present part I of nonlinear operator ideals theory between metric spaces and Banach spaces. Building upon the definition of operator ideal between arbitrary Banach spaces of A. Pietsch we pose three types of nonlinear versions of operator ideals. We introduce several examples of nonlinear ideals and the relationships between them. For every space ideal $\mathsf{A}$ can be generated by a
special nonlinear ideal which consists of those Lipschitz operators admitting a factorization through a Banach space $\mathbf{M}\in\mathsf{A}$. We investigate products and quotients of nonlinear ideals. We devote to constructions three types of new nonlinear ideals from given ones. A ``new'' is a rule defining nonlinear ideals $\mathfrak{A}^{L}_{new}$, $\textswab{A}^{L}_{new}$, and $\textfrak{A}^{L}_{new}$ for every $\mathfrak{A}$, $\textswab{A}^{L}$, and $\textfrak{A}^{L}$ respectively, are called a Lipschitz procedure. Considering the class of all stable objects for a given Lipschitz procedure we obtain nonlinear ideals having special properties. We present the  concept  of  a (strongly) $p-$Banach nonlinear ideal ($0<p<1$) and prove that the nonlinear ideals of Lipschitz nuclear operators, Lipschitz Hilbert operators, products and quotient are strongly $r-$Banach nonlinear ideals ($0<r<1$). 
\end{abstract}

\section{NOTATIONS AND PRELIMINARIES}
We introduce concepts and notations that will be used in this article. The letters $X$, $Y$ and $Z$ will denote pointed metric spaces, i.e. each one has a special point designated by $0$. The letters $E$, $F$ and $G$ will denote Banach spaces. The closed unit ball of a Banach space $E$ is denoted by $B_{E}$. The dual space of $E$ is $E^{*}$. The class of all bounded linear operators between arbitrary Banach spaces will be denoted by $\mathfrak{L}$. The symbols $\mathbb{R}$ and $\mathbb{N}$ stand for the set of all real numbers and the set of all natural numbers, respectively. For a Lipschitz mapping $T$ between metric spaces, $Lip(T)$ denotes its Lipschitz constant. 

Given metric spaces $X$ and $Y$, we assume that all Lipschitz functions from $X$ to $Y$ has a special point $0$. For the special case $Y=\mathbb{R}$, the Banach space of real$-$valued Lipschitz functions defined on $X$ that send $0$ to $0$ with the Lipschitz norm $Lip(\cdot)$ will be denoted by $X^{\#}$. The space $X^{\#}$ is called the Lipschitz dual of $X$. 

In this paper, we write $\mathsf{A}$ be a space ideal see \cite [Sec. 2.1] {P78}. Consider $E_{1}\times E_{2}$ be the Cartesian products of $E_{1}$ and $E_{2}$. Put $J_{1} x_{1}:=(x_{1}, 0)$, $J_{2} x_{2}:=(0, x_{2})$, $Q_{1} (x_{1}, x_{2}):=x_{1}$, and $Q_{2} (x_{1}, x_{2}):=x_{2}$.

 Recall the Lipschitz dual operator $S^{\#}$ from $Y^{\#}$ to $X^{\#}$ of a map $S\in Lip(X,Y)$ is defined by $$\left\langle g, Sx\right\rangle_{(Y^{\#},Y)}=\left\langle S^{\#}g,x\right\rangle_{(X^{\#},X)}$$ for every $x\in X$ and $g\in Y^{\#}$. This is a bounded linear operator and $Lip(S)=\left\|S^{\#}\right\|_{\mathfrak{L}(Y^{\#},X^{\#})}$, see \cite{Mass15}. The definition of the extension and lifting property of Banach space can be founded in \cite [Sec. C.3] {P78}. For a Banach space $F$. Then we put $F^{inj}:=\ell_{\infty}(B_{F^{*}})$ and $J_{F} e:=(\left\langle e, e^{*}\right\rangle)$ for $e\in F$. Clearly $J_{F}$ is a metric injection from $F$ into $F^{inj}$. In this way every Banach space can be identified with a subspace of some Banach space having the metric extension property. For a Banach space $E$. Then we put $E^{sur}:=\ell_{1}(B_{E})$ and $Q_{E} (\xi_{x}):=\sum\limits_{B_{E}} \xi_{x} x$ for $(\xi_{x})\in \ell_{1}(B_{E})$. Clearly $Q_{E}$ is a metric surjection from $E^{sur}$ onto $E$. In this way every Banach space can be identified with a quotient space of some Banach space having the metric lifting property. 
     
\section{DEFINITIONS AND ELEMENTARY PROPERTIES}

Building upon the results of M. G. Cabrera--Padilla, J. A. Ch{\'a}vez-Dom{\'{\i}}nguez, A. Jimenez--Vargas and M. Villegas--Vallecillos \cite{MJAM14} we introduce Lipschitz tensor product between pointed metric space $X$ and the topological dual space $F^{*}$ of a Banach space $F$.

Similarly to \cite [Definition 1.1] {MJAM14} we can constructed $X\boxtimes F^{*}$ as a space of linear functionals on $Lip(X,F)$ spanned by the set $\left\{\delta_{(x,y)}\boxtimes a : (x,y)\in X\times X; a\in F^{*}\right\}$.

The proof of the next lemma is similar to \cite [Lemma 1.1] {MJAM14} and is therefore omitted.

\begin{lem}
Let $\lambda\in\mathbb{K}$, $(x,y), (x_{1},y_{1}), (x_{2},y_{2})\in X\times X$ and $a, a_{1}, a_{2}\in F^{*}$.
\begin{itemize}

	\item $\lambda\left(\delta_{(x,y)}\boxtimes a\right)=\left((\lambda\delta_{(x,y)})\boxtimes a\right)=\left(\delta_{(x,y)}\boxtimes (\lambda a)\right)$.
	
	\item $\left(\delta_{(x_{1},y_{1})}+\delta_{(x_{2},y_{2})}\right)\boxtimes a=\delta_{(x_{1},y_{1})}\boxtimes a + \delta_{(x_{1},y_{1})}\boxtimes a$.
	
	\item $\delta_{(x,y)}\boxtimes \left(a_{1}+ a_{2}\right)=\delta_{(x,y)}\boxtimes a_{1} + \delta_{(x,y)}\boxtimes a_{2}$.

	\item $\left(\delta_{(x,y)}\boxtimes a\right)=\delta_{(x,y)}\boxtimes \theta=\theta$.
	
\end{itemize}

\end{lem}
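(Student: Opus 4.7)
The plan is to reduce every identity to the defining action of an elementary Lipschitz tensor on a test map $T\in Lip(X,F)$. Following the analogue of \cite[Def. 1.1]{MJAM14}, I would take as operative definition
$$\bigl(\delta_{(x,y)}\boxtimes a\bigr)(T)\;=\;\bigl\langle a,\,T(x)-T(y)\bigr\rangle\qquad(T\in Lip(X,F)),$$
and extend by linearity on the free vector space spanned by the symbols $\delta_{(x,y)}$ and by $F^{*}$. Since two functionals in $X\boxtimes F^{*}$ agree iff they agree on every $T\in Lip(X,F)$, each of the four bullets reduces to a pointwise check at an arbitrary $T$.

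For the scalar identity, I would evaluate the three candidate functionals at a fixed $T$ and use that the pairing $\langle\cdot,\cdot\rangle\colon F^{*}\times F\to\mathbb{K}$ is bilinear; each side collapses to $\lambda\langle a,T(x)-T(y)\rangle$, which settles the chain of equalities. The additivity statements in the first and second slots are handled the same way: writing $\delta_{(x_{1},y_{1})}+\delta_{(x_{2},y_{2})}$ as an element of the underlying free space acting on $T$ by $T\mapsto\bigl(T(x_{1})-T(y_{1})\bigr)+\bigl(T(x_{2})-T(y_{2})\bigr)$, I apply $a$ and distribute; for the second slot I instead fix $v=T(x)-T(y)\in F$ and use linearity of $a\mapsto\langle a,v\rangle$. (Here I would also flag what looks like a typographical slip in the statement: the right-hand side of the second bullet should read $\delta_{(x_{1},y_{1})}\boxtimes a+\delta_{(x_{2},y_{2})}\boxtimes a$.)

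The last bullet, once interpreted as $0\cdot(\delta_{(x,y)}\boxtimes a)=\delta_{(x,y)}\boxtimes\theta=\theta$, follows from the previous scalar identity with $\lambda=0$: applying the null functional $\theta\in F^{*}$ to any $T(x)-T(y)$ yields $0$, so $\delta_{(x,y)}\boxtimes\theta$ is the zero functional on $Lip(X,F)$.

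The main obstacle is not analytical depth but notational bookkeeping. One must first decide that $\lambda\delta_{(x,y)}$ and $\delta_{(x_{1},y_{1})}+\delta_{(x_{2},y_{2})}$ live in the free vector space on $\{\delta_{(x,y)}:(x,y)\in X\times X\}$ before the symbols $\boxtimes$ in the lemma are meaningful, and then verify that the induced bilinear map from that free space times $F^{*}$ into $Lip(X,F)^{\ast}$ is well-defined. After that each identity is a one-line computation using bilinearity of the duality pairing, which is exactly the content of \cite[Lemma 1.1]{MJAM14} adapted from scalar-valued to $F$-valued Lipschitz maps.
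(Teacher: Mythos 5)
Your proposal is correct and is exactly the argument the paper has in mind: the paper omits the proof, deferring to the verbatim-analogous Lemma 1.1 of \cite{MJAM14}, and that proof proceeds precisely as you describe, by evaluating both sides on an arbitrary $T\in Lip(X,F)$ and using bilinearity of the pairing $\langle\cdot,\cdot\rangle\colon F^{*}\times F\to\mathbb{K}$ together with linearity in the free vector space on the symbols $\delta_{(x,y)}$. Your flags of the two typographical slips (the subscript on the second bullet and the missing $0\cdot$ in the fourth) are also correct.
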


\begin{rem}
We say that $\delta_{(x,y)}\boxtimes a$ is an elementary Lipschitz tensor element in $X\boxtimes F^{*}$. Note that each element $v$ in $X\boxtimes F^{*}$ is of the form $v=\sum\limits_{j=1}^{m}\lambda_{j}\left(\delta_{(x_{j},y_{j})}\boxtimes a_{j}\right)$, where $n\in\mathbb{K}$, $\lambda_{j}\in\mathbb{K}$, $(x_{j},y_{j})\in X\times X$ and $a_{j}\in F^{*}$. This representation of $v$ is not unique. It is worth noting that each element $v$ of $X\boxtimes F^{*}$  can be represented as $v=\sum\limits_{j=1}^{m}\left(\delta_{(x_{j},y_{j})}\boxtimes a_{j}\right)$ since $\lambda\left(\delta_{(x,y)}\boxtimes a\right)=\left(\delta_{(x,y)}\boxtimes (\lambda a)\right)$. This representation of $v$ admits the following refinement.
\end{rem}

The proof of the next lemma is similar to \cite [Lemma 1.1] {MJAM14} and is therefore omitted.

\begin{lem}
Every nonzero Lipschitz tensor $v\in X\boxtimes F^{*}$  has a representation in the form \\ $\sum\limits_{j=1}^{m}\left(\delta_{(z_{j},0)}\boxtimes q_{j}\right)$, where

$$m=\min\left\{k\in\mathbb{N}: \exists z_{1},\cdots, z_{k}, q_{1},\cdots, q_{k}\in F^{*}| v=\sum\limits_{j=1}^{k}\left(\delta_{(z_{k},0)}\boxtimes q_{j}\right)\right\}$$

and the points $z_{1},\cdots, z_{m}$ in $X$ are distinct from the base point $0$ of $X$ and pairwise distinct.

\end{lem}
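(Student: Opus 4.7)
The plan is to start from an arbitrary representation of $v$ and transform it, via a finite sequence of admissible moves, into one of the claimed form. Since $X \boxtimes F^*$ is realized inside $\mathrm{Lip}(X,F)^{*}$ and every $T\in\mathrm{Lip}(X,F)$ satisfies $T(0)=0$, one has $Tx-Ty=(Tx-T0)-(Ty-T0)$, which yields the key structural identity
\[
\delta_{(x,y)}\boxtimes a \;=\; \delta_{(x,0)}\boxtimes a \;+\; \delta_{(y,0)}\boxtimes(-a)
\]
for every $(x,y)\in X\times X$ and $a\in F^{*}$. This identity is the engine of the whole argument; everything afterwards is bookkeeping.

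First, using the scalar-absorption bullet of Lemma~1, I rewrite a given $v=\sum_{j=1}^{n}\lambda_j(\delta_{(x_j,y_j)}\boxtimes a_j)$ as $v=\sum_{j=1}^{n}\delta_{(x_j,y_j)}\boxtimes(\lambda_j a_j)$, and then apply the key identity to each summand to obtain a representation $v=\sum_{i=1}^{2n}\delta_{(w_i,0)}\boxtimes r_i$ with at most $2n$ terms whose second coordinate is the base point. Next I clean up this sum by three elementary moves: (i) discard any term with $w_i=0$, since its functional $T\mapsto\langle r_i,T(0)-T(0)\rangle$ vanishes; (ii) discard any term with $r_i=\theta$, by the last bullet of Lemma~1; (iii) whenever $w_i=w_{i'}$ for some $i\ne i'$, collapse the pair using bilinearity in the second slot to $\delta_{(w_i,0)}\boxtimes(r_i+r_{i'})$. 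After finitely many such moves one arrives at a representation $\sum_{j=1}^{k}\delta_{(z_j,0)}\boxtimes q_j$ whose base points $z_j$ are pairwise distinct and all different from $0$. In particular the set
\[
\mathcal{S}(v) \;:=\; \bigl\{\,k\in\mathbb{N} : v=\textstyle\sum_{j=1}^{k}\delta_{(z_j,0)}\boxtimes q_j \text{ for some } z_j\in X,\ q_j\in F^{*}\bigr\}
\]
is nonempty, and because $v\ne 0$ we have $k\ge 1$, so $\mathcal{S}(v)$ admits a well-defined minimum $m$.

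Finally, to see that the minimum $m$ is actually realized by a representation whose $z_j$ are pairwise distinct and different from $0$, I note that each of moves (i)–(iii) strictly decreases the number of summands whenever it is applicable. Hence at any representation realizing the minimum $m$, none of these moves is applicable: no $z_j$ can equal $0$ and the $z_j$ must be pairwise distinct. This yields exactly the asserted form. The only genuinely non-formal step is the key identity above, which rests on interpreting $\delta_{(x,y)}\boxtimes a$ as a linear functional on $\mathrm{Lip}(X,F)$ and on the base-point convention; once this is in place, the argument reduces to a finite combinatorial simplification.
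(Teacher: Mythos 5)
Your argument is correct and is essentially the standard reduction: the identity $\delta_{(x,y)}\boxtimes a=\delta_{(x,0)}\boxtimes a+\delta_{(y,0)}\boxtimes(-a)$ converts an arbitrary representation into one whose second coordinates are all the base point, after which merging coincident base points and discarding null terms yields the reduced form, and minimality rules out any further applicable move, forcing the $z_j$ to be pairwise distinct and different from $0$. The paper itself omits the proof and defers to the corresponding lemma in \cite{MJAM14}, which proceeds by the same reduction, so your route matches the intended one; the only cosmetic point is that your key identity is purely algebraic, since $\delta_{(x,y)}=\delta_{(x,0)}-\delta_{(y,0)}$ already as difference functionals, so the appeal to the base-point normalization $T(0)=0$ is not actually needed there.
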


We can concatenate the representations of two elements of $X\boxtimes F^{*}$ to get a representation of their sum.

\begin{lem}
Let $v_{1},\; v_{2}\in X\boxtimes F^{*}$ and let $\sum\limits_{j=1}^{m}\left(\delta_{(x_{j},y_{j})}\boxtimes a_{j}\right)$ and $\sum\limits_{j=1}^{m'}\left(\delta_{(x'_{j},y'_{j})}\boxtimes a'_{j}\right)$ be representations of $v_{1}$ and $v_{2}$, respectively. Then $\sum\limits_{j=1}^{m+m'}\left(\delta_{(x''_{j},y''_{j})}\boxtimes a''_{j}\right)$, where
\begin{equation}\nonumber 
\left(x''_{j},y''_{j}\right)=\left\{
\begin{array}{rl}
(x_{j},y_{j})   & \text{if } j=1,\cdots,    m    \\
(x'_{j-m},y'_{j-m}) & \text{if } j=m+1,\cdots, m+m' \\
\end{array} \right.
\end{equation}
\begin{equation}\nonumber 
a''_{j}=\left\{
\begin{array}{rl}
a_{j}   & \text{if } j=1,\cdots,    m    \\
a'_{j-m} & \text{if } j=m+1,\cdots, m+m' \\
\end{array} \right.
\end{equation}

is a representation of $v_{1}+ v_{2}$.

\end{lem}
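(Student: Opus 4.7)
The proof is essentially a bookkeeping exercise in the vector space $X\boxtimes F^{*}$: because the construction at the start of Section 2 realises $X\boxtimes F^{*}$ as the linear span inside $\mathrm{Lip}(X,F)^{\#}$ of the elementary tensors $\delta_{(x,y)}\boxtimes a$, a ``representation'' of an element is literally an expression of it as a finite sum of such elementary tensors, and the operation of addition in $X\boxtimes F^{*}$ is inherited from the ambient vector space. So the plan is simply to unpack the two piecewise definitions of $(x''_{j},y''_{j})$ and $a''_{j}$, plug them into $\sum_{j=1}^{m+m'}\delta_{(x''_{j},y''_{j})}\boxtimes a''_{j}$, and split the sum at $j=m$.

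More precisely, I would first write
$$\sum_{j=1}^{m+m'}\delta_{(x''_{j},y''_{j})}\boxtimes a''_{j}=\sum_{j=1}^{m}\delta_{(x''_{j},y''_{j})}\boxtimes a''_{j}+\sum_{j=m+1}^{m+m'}\delta_{(x''_{j},y''_{j})}\boxtimes a''_{j},$$
which is legitimate because addition in the vector space $X\boxtimes F^{*}$ is associative. For the first block, the definitions give $(x''_{j},y''_{j})=(x_{j},y_{j})$ and $a''_{j}=a_{j}$ for $j=1,\dots,m$, so that block equals $\sum_{j=1}^{m}\delta_{(x_{j},y_{j})}\boxtimes a_{j}=v_{1}$ by hypothesis. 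For the second block, I would reindex with $k=j-m$ (so that $k$ runs from $1$ to $m'$), obtaining $(x''_{j},y''_{j})=(x'_{k},y'_{k})$ and $a''_{j}=a'_{k}$, whence the block equals $\sum_{k=1}^{m'}\delta_{(x'_{k},y'_{k})}\boxtimes a'_{k}=v_{2}$. Adding the two identities gives the claim.

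There is really no obstacle here beyond index juggling: the only conceptual ingredient is that the elementary tensors live inside a vector space and that a representation is just a decomposition as a finite linear combination (in fact a sum, by the second bullet of the remark absorbing scalars into the $F^{*}$-component). Consequently the lemma holds without any hypothesis on whether the points $(x_{j},y_{j})$ and $(x'_{k},y'_{k})$ overlap or on whether any $a_{j}, a'_{k}$ vanish — the resulting concatenated representation need not be reduced in the sense of the previous lemma, but it is still a valid representation of $v_{1}+v_{2}$.
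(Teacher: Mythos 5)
Your proof is correct, and since the paper omits the proof of this lemma entirely (it is stated without justification, in analogy with the corresponding lemma of \cite{MJAM14}), your straightforward argument — split the sum at $j=m$, reindex the tail, and invoke the given representations of $v_1$ and $v_2$ — is exactly the intended verification. The closing observation that the concatenated representation need not be reduced is a sensible remark and does not affect the validity of the claim.
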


We now describe the action of a Lipschitz tensor $v\in X\boxtimes F^{*}$ on a function $f\in Lip(X,F)$.

The proof of the next lemma is similar to \cite [Lemma 1.4] {MJAM14} and is therefore omitted.

\begin{lem}\label{Auto}
Let $v=\sum\limits_{j=1}^{m}\left(\delta_{(x_{j},y_{j})}\boxtimes a_{j}\right)\in X\boxtimes F^{*}$ and $f\in Lip(X,F)$. Then

$$v(f)=\sum\limits_{j=1}^{m}\left\langle fx_{j}-fy_{j}, a_{j}\right\rangle.$$ 

\end{lem}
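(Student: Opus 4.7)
The plan is to unfold the construction of $X\boxtimes F^{*}$ as a space of linear functionals on $Lip(X,F)$ and then invoke linearity. By analogy with \cite{MJAM14}, the elementary Lipschitz tensor $\delta_{(x,y)}\boxtimes a$ is defined as the linear functional on $Lip(X,F)$ given by
$$\bigl(\delta_{(x,y)}\boxtimes a\bigr)(f) := \bigl\langle f(x)-f(y),\, a\bigr\rangle,$$
and $X\boxtimes F^{*}$ is its linear span inside the algebraic dual of $Lip(X,F)$. Once this evaluation map is in hand, the lemma follows by expanding a general $v$ as a linear combination of elementary tensors and applying linearity.

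First I would verify that the prescription $f\mapsto\langle f(x)-f(y),a\rangle$ is compatible with the bilinearity relations listed in Lemma 1, so that it descends to a well-defined functional on the quotient of the free vector space on $\{\delta_{(x,y)}\boxtimes a\}$ by those relations. For example, for $\lambda\in\mathbb{K}$,
$$\lambda\bigl(\delta_{(x,y)}\boxtimes a\bigr)(f)=\lambda\langle f(x)-f(y),a\rangle=\langle f(x)-f(y),\lambda a\rangle=\bigl(\delta_{(x,y)}\boxtimes(\lambda a)\bigr)(f),$$
and the additivity on each slot is checked identically. This handles the only real subtlety, namely that the representation of $v$ is not unique.

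Next, starting from a representation $v=\sum_{j=1}^{m}(\delta_{(x_{j},y_{j})}\boxtimes a_{j})$, I invoke linearity of evaluation in the ambient dual space to conclude
$$v(f)=\sum_{j=1}^{m}\bigl(\delta_{(x_{j},y_{j})}\boxtimes a_{j}\bigr)(f)=\sum_{j=1}^{m}\bigl\langle f(x_{j})-f(y_{j}),\, a_{j}\bigr\rangle,$$
which is precisely the claimed identity.

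The main (and only mild) obstacle is the representation-independence that I addressed in the first step: two representations of the same $v$ must yield the same scalar $v(f)$. Because $v$ is by construction a single element of the algebraic dual of $Lip(X,F)$, the value $v(f)$ is intrinsic, and the compatibility of the elementary-tensor formula with the bilinearity relations of Lemma 1 is exactly what guarantees that the right-hand side of the displayed formula does not depend on the chosen representation.
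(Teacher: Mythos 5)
Your proposal is correct, and the core of it --- unfold the definition of the elementary tensor $\delta_{(x,y)}\boxtimes a$ as the functional $f\mapsto\langle f(x)-f(y),a\rangle$ and then apply linearity of evaluation --- is exactly the argument behind \cite[Lemma~1.4]{MJAM14}, to which the paper defers. One remark: the well-definedness step you flag as ``the only real subtlety'' is actually vacuous in this construction. The paper (following \cite{MJAM14}) builds $X\boxtimes F^{*}$ not as a quotient of a free vector space by bilinearity relations, but directly as the linear span, inside the algebraic dual $\left(Lip(X,F)\right)'$, of the functionals $\delta_{(x,y)}\boxtimes a$. Under that construction $v$ \emph{is} a functional by fiat, so $v(f)$ is intrinsic from the outset; Lemma~1's bilinearity identities are consequences of the definition, not relations one must check compatibility against. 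The lemma then reduces immediately to the linearity computation in your third paragraph, with no descent argument needed.
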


Our next aim is to characterize the zero Lipschitz tensor. For it we need the following Lipschitz operators.

The proof of the next lemma is similar to \cite [Lemma 1.5] {MJAM14} and is therefore omitted.

\begin{lem}\label{Auto1}
Let $g\in X^{\#}$ and $e\in F$. The map $g\cdot e :X\longrightarrow F$, given by
$$\left(g\cdot e\right)(x)=g(x)\cdot e,$$ belongs to $Lip(X,F)$ and $Lip\left(g\cdot e\right)=Lip(g)\cdot\left\|e\right\|$. 
\end{lem}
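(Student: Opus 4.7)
The plan is very direct: verify the basepoint-preserving property, derive the Lipschitz inequality from the definition, and then produce a matching lower bound to achieve equality.

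First I would check that $g\cdot e$ maps the base point of $X$ to the zero vector of $F$. Since $g\in X^{\#}$ satisfies $g(0)=0$ by definition of the Lipschitz dual, one has $(g\cdot e)(0)=g(0)\cdot e=0\cdot e=0$, which is what we need in order to place $g\cdot e$ inside $Lip(X,F)$ under the convention adopted in the preliminaries.

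Next I would verify the Lipschitz bound. For arbitrary $x,y\in X$,
\[
\left\|(g\cdot e)(x)-(g\cdot e)(y)\right\|_{F}=\left\|g(x)e-g(y)e\right\|_{F}=\left|g(x)-g(y)\right|\cdot\left\|e\right\|\leq Lip(g)\,d(x,y)\,\left\|e\right\|.
\]
This shows both that $g\cdot e$ is Lipschitz and that $Lip(g\cdot e)\leq Lip(g)\cdot\left\|e\right\|$.

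For the reverse inequality I would split into cases. If $e=0$ the equality is trivial, since then $g\cdot e$ is identically zero and $Lip(g)\cdot\left\|e\right\|=0$. If $e\neq 0$, I would observe that the equality $\left\|(g\cdot e)(x)-(g\cdot e)(y)\right\|_{F}=\left|g(x)-g(y)\right|\cdot\left\|e\right\|$ above is an equality, not merely an inequality. Dividing by $d(x,y)$ for $x\neq y$ and taking the supremum over all such pairs yields
\[
Lip(g\cdot e)=\sup_{x\neq y}\frac{\left|g(x)-g(y)\right|\cdot\left\|e\right\|}{d(x,y)}=\left\|e\right\|\cdot Lip(g),
\]
which gives the required equality.

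There is essentially no obstacle here: the argument reduces to the fact that scalar multiplication by a fixed vector $e\in F$ scales norms exactly by $\left\|e\right\|$, so both the upper and lower Lipschitz estimates collapse to a single identity. The only subtlety worth flagging explicitly is the $e=0$ case, which has to be handled separately to avoid dividing by $\left\|e\right\|$, but it is immediate once noted.
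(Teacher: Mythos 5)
Your proof is correct and is exactly the direct computation one would expect; the paper itself omits the argument, deferring to the analogous Lemma 1.5 of \cite{MJAM14}, and your verification (basepoint preservation, the exact scaling identity $\lVert (g\cdot e)(x)-(g\cdot e)(y)\rVert = |g(x)-g(y)|\,\lVert e\rVert$, and taking the supremum) is precisely that argument. One small remark: the case split on $e=0$ is not strictly necessary, since your supremum formula already returns $0$ on both sides when $e=0$, but flagging it does no harm.
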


The proof of the next proposition is similar to \cite [Proposition 1.6] {MJAM14} and is therefore omitted.

\begin{prop}
If $v=\sum\limits_{j=1}^{m}\left(\delta_{(x_{j},y_{j})}\boxtimes a_{j}\right)\in X\boxtimes F^{*}$, then the following assertions are equivalent:
\begin{itemize}
	\item $v=0$.
	\item $\sum\limits_{j=1}^{m} \left(gx_{j}-gy_{j}\right)\cdot\left\langle a_{j}, e\right\rangle=0$, for every $g\in B_{{X}^{\#}}$ and $e\in B_{E}$.
	\item $\sum\limits_{j=1}^{m} \left(gx_{j}-gy_{j}\right)\cdot a_{j}=0$, for every $g\in B_{{X}^{\#}}$.
\end{itemize}
\end{prop}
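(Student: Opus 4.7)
The plan is to establish a cycle (1) $\Rightarrow$ (2) $\Leftrightarrow$ (3) $\Rightarrow$ (1). Throughout, I would use Lemma \ref{Auto} to reformulate (1) as: $v(f) = \sum_{j=1}^{m}\langle fx_{j}-fy_{j}, a_{j}\rangle = 0$ for every $f\in Lip(X,F)$. (I also read the ``$B_{E}$'' in clause (2) as a typo for $B_{F}$, since $F^{*}$ is the relevant dual space.)

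For (1) $\Rightarrow$ (2), I would feed into $v$ the scalar-times-vector test functions supplied by Lemma \ref{Auto1}: for fixed $g\in B_{X^{\#}}$ and $e\in B_{F}$, the map $g\cdot e$ lies in $Lip(X,F)$, and Lemma \ref{Auto} computes
$$v(g\cdot e)=\sum_{j=1}^{m}\langle (gx_{j}-gy_{j})\,e,\,a_{j}\rangle=\sum_{j=1}^{m}(gx_{j}-gy_{j})\,\langle e,a_{j}\rangle,$$
which must vanish by (1). Next, (2) $\Leftrightarrow$ (3) is a duality bookkeeping step: for each fixed $g$ the quantity $w_{g}:=\sum_{j}(gx_{j}-gy_{j})\,a_{j}$ is an element of $F^{*}$, and a functional in $F^{*}$ is zero iff it pairs to zero with every $e\in B_{F}$; so (2) is precisely the assertion $w_{g}=0$ of (3).

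The core implication is (3) $\Rightarrow$ (1). Given $f\in Lip(X,F)$, set $u_{j}:=fx_{j}-fy_{j}\in F$. These finitely many vectors span a finite-dimensional subspace $M\subseteq F$. I would fix a basis $\{e_{1},\dots,e_{n}\}$ of $M$ with dual basis $\{e_{k}^{*}\}\subseteq M^{*}$, Hahn--Banach-extend each $e_{k}^{*}$ to $\bar e_{k}^{*}\in F^{*}$, and set $g_{k}:=\bar e_{k}^{*}\circ f$. Since $f(0)=0$ and $\bar e_{k}^{*}$ is bounded linear, $g_{k}\in X^{\#}$. Writing $u_{j}=\sum_{k}c_{jk}e_{k}$ one reads off $c_{jk}=\bar e_{k}^{*}(u_{j})=g_{k}(x_{j})-g_{k}(y_{j})$, so
$$v(f)=\sum_{j=1}^{m}\langle u_{j},a_{j}\rangle=\sum_{k=1}^{n}\Big\langle e_{k},\sum_{j=1}^{m}(g_{k}x_{j}-g_{k}y_{j})\,a_{j}\Big\rangle,$$
and each inner sum vanishes by applying (3) to the unit-ball rescaling of $g_{k}$ (the identity is homogeneous in $g$).

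The main obstacle is precisely the step (3) $\Rightarrow$ (1): transporting a statement about scalar-valued test functions $g\in X^{\#}$ to a statement about arbitrary $F$-valued Lipschitz maps $f$. The key that breaks the obstruction is that only finitely many values of $f$ enter into $v(f)$, so the vector-valued data lives in a finite-dimensional slice of $F$; Hahn--Banach then converts $f$ into finitely many scalar Lipschitz functions $g_{k}$ that feed exactly into hypothesis (3).
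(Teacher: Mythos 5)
Your proof is correct, and it is the natural argument for this duality statement. The paper itself omits its proof, deferring to Proposition~1.6 of the Cabrera--Padilla et al.\ preprint, so a line-by-line comparison is not possible; but the cycle $(1)\Rightarrow(2)\Leftrightarrow(3)\Rightarrow(1)$ is the expected decomposition, and the finite-dimensional slicing plus Hahn--Banach step in $(3)\Rightarrow(1)$ is precisely the device that converts the scalar-valued hypothesis into a statement about all $F$-valued Lipschitz maps. You also correctly identify the ``$B_E$'' as a typo for ``$B_F$''. One trivial edge case deserves one extra sentence in a formal write-up: if some $g_k$ satisfies $Lip(g_k)=0$, then $g_k\equiv 0$ (since $g_k(0)=\bar e_k^{*}(f(0))=0$), so the corresponding inner sum vanishes trivially and the rescaling step is safe; with $Lip(g_k)>0$ the homogeneity argument you invoke applies directly.
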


The proof of the next theorem is similar to \cite [Theorem 1.7] {MJAM14} and is therefore omitted.

\begin{thm}
$\left\langle X\boxtimes F^{*}, Lip(X,F)\right\rangle$ forms a dual pair, where the bilinear form $\left\langle \cdot, \cdot\right\rangle$associated to the dual pair is given, for $v=\sum\limits_{j=1}^{m}\left(\delta_{(x_{j},y_{j})}\boxtimes a_{j}\right)\in X\boxtimes F^{*}$ and 
$f\in Lip(X,F)$, by $$\left\langle v, f\right\rangle=\sum\limits_{j=1}^{m}\left\langle fx_{j}-fy_{j}, a_{j}\right\rangle.$$
\end{thm}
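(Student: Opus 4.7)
To prove that $\langle X\boxtimes F^{*},Lip(X,F)\rangle$ is a dual pair, I must verify three things: (i) the bilinear form is well--defined on $X\boxtimes F^{*}$ (independent of the chosen representation of $v$) and bilinear, (ii) it separates the points of $X\boxtimes F^{*}$, and (iii) it separates the points of $Lip(X,F)$.

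For (i), well--definedness is essentially free. By construction, $X\boxtimes F^{*}$ is defined as a subspace of the linear dual of $Lip(X,F)$ spanned by the elementary tensors $\delta_{(x,y)}\boxtimes a$, whose action on $f\in Lip(X,F)$ is $\langle fx-fy,a\rangle$. Lemma \ref{Auto} then records that an arbitrary $v=\sum_{j=1}^{m}\delta_{(x_{j},y_{j})}\boxtimes a_{j}$ acts by $\sum_{j=1}^{m}\langle fx_{j}-fy_{j},a_{j}\rangle$; since $v$ itself is already a functional, this value depends only on $v$ and $f$, not on the representation. Bilinearity in the first slot follows immediately from the concatenation recipe given in the sum lemma together with the linearity relations of the previous lemma; bilinearity in the second slot is obvious from the expression $\langle fx_{j}-fy_{j},a_{j}\rangle$ being linear in $f$.

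For (ii), assume $\langle v,f\rangle=0$ for every $f\in Lip(X,F)$. Choose $g\in B_{X^{\#}}$ and $e\in B_{F}$ and apply the identity to the Lipschitz map $g\cdot e\in Lip(X,F)$ supplied by Lemma \ref{Auto1}. Since $(g\cdot e)(x_{j})-(g\cdot e)(y_{j})=(gx_{j}-gy_{j})\,e$, the assumption becomes
\[
0=\sum_{j=1}^{m}(gx_{j}-gy_{j})\,\langle e,a_{j}\rangle\qquad\text{for all }g\in B_{X^{\#}},\ e\in B_{F},
\]
which is precisely the second condition in the preceding proposition; therefore $v=0$.

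For (iii), assume $\langle v,f\rangle=0$ for all $v\in X\boxtimes F^{*}$. Testing against the elementary tensors $v=\delta_{(x,0)}\boxtimes a$ with arbitrary $x\in X$ and $a\in F^{*}$, and recalling that $f(0)=0$, gives $\langle fx,a\rangle=0$ for every $a\in F^{*}$; by Hahn--Banach $fx=0$ for every $x$, hence $f=0$. The main obstacle is step (ii), which is the one place where nontrivial content is needed; fortunately the earlier proposition already reduces it to the test maps $g\cdot e$ constructed in Lemma \ref{Auto1}, so the argument collapses to a single substitution.
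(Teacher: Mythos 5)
The paper itself gives no proof of this theorem; it only remarks that the argument is analogous to Theorem~1.7 of the cited Lipschitz tensor product paper, so your job was to supply it. What you wrote is correct and is the standard argument for a dual pair: well-definedness and bilinearity, then separation in each slot, with the non-degeneracy on the $X\boxtimes F^{*}$ side fed through the preceding characterization of the zero tensor and the test maps $g\cdot e$, and on the $Lip(X,F)$ side through elementary tensors $\delta_{(x,0)}\boxtimes a$ plus the fact that $F^{*}$ separates points of $F$. One small remark: since $X\boxtimes F^{*}$ is constructed outright as a linear subspace of $Lip(X,F)'$, the implication ``$\langle v,f\rangle=0$ for all $f$ implies $v=0$'' is already immediate from the definition of $v$ as a functional; the detour through the proposition and the maps $g\cdot e$ is not wrong, but it is doing work that was already done when the proposition itself was established, and it somewhat inverts the logical dependence (that proposition is a nontrivial \emph{consequence} of the dual-pair setup, giving a checkable criterion in terms of $g\cdot e$, rather than an ingredient one needs to invoke to get separation). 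Apart from that harmless redundancy, the proof is sound and in the spirit of the omitted reference.
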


Since $\left\langle X\boxtimes F^{*}, Lip(X,F)\right\rangle$ is a dual pair, $Lip(X,F)$ can be identified with a linear subspace of $\left( X\boxtimes F^{*}\right)'$ as follows.

The proof of the next corollary is similar to \cite [Corollary 1.8] {MJAM14} and is therefore omitted.

\begin{cor}
For every map $f\in Lip(X,F)$, the functional $\Lambda(f):X\boxtimes F^{*}\longrightarrow\mathbb{K}$, given by

$$\Lambda(f)(v)=\sum\limits_{j=1}^{m}\left\langle fx_{j}-fy_{j}, a_{j}\right\rangle$$
for $v=\sum\limits_{j=1}^{m}\left(\delta_{(x_{j},y_{j})}\boxtimes a_{j}\right)\in X\boxtimes F^{*}$, is linear. We say $\Lambda(f)$ is the linear functional on $X\boxtimes F^{*}$ associated to $f$. The map $f\mapsto \Lambda(f)$ is a linear monomorphism from  $Lip(X,F)$ into $\left( X\boxtimes F^{*}\right)'$. 
\end{cor}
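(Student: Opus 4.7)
The plan is to derive everything from the dual pair theorem stated just above. Since $\langle X\boxtimes F^{*}, Lip(X,F)\rangle$ forms a dual pair, for each fixed $f \in Lip(X,F)$ the map $v \mapsto \langle v, f\rangle$ is automatically a linear functional on $X\boxtimes F^{*}$, and this map is precisely $\Lambda(f)$. Thus the well-definedness of $\Lambda(f)$ (independence from the chosen representation of $v$) is already encoded in the dual pair statement: if $v=\sum_{j=1}^{m}(\delta_{(x_{j},y_{j})}\boxtimes a_{j})$ and $v=\sum_{k=1}^{m'}(\delta_{(x'_{k},y'_{k})}\boxtimes a'_{k})$ are two representations, their difference is a zero tensor, so by Proposition preceding the theorem (the zero tensor characterization) applied via Lemma \ref{Auto} the two expressions $\sum\langle fx_{j}-fy_{j}, a_{j}\rangle$ and $\sum\langle fx'_{k}-fy'_{k}, a'_{k}\rangle$ agree.

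Next I would verify additivity and homogeneity of $\Lambda(f)$ on $X\boxtimes F^{*}$. Given $v_{1},v_{2}\in X\boxtimes F^{*}$, I would invoke the concatenation lemma (Lemma on sums of representations) to produce an explicit representation of $v_{1}+v_{2}$, and then observe that by construction
\[
\Lambda(f)(v_{1}+v_{2}) = \sum_{j=1}^{m+m'}\langle fx''_{j}-fy''_{j}, a''_{j}\rangle = \Lambda(f)(v_{1})+\Lambda(f)(v_{2}).
\]
Homogeneity $\Lambda(f)(\lambda v)=\lambda\Lambda(f)(v)$ follows from Lemma~1 (first bullet), which shows $\lambda(\delta_{(x,y)}\boxtimes a)=\delta_{(x,y)}\boxtimes(\lambda a)$, so a scalar multiple of a representation of $v$ is a valid representation of $\lambda v$ with the scalar absorbed into the $a_{j}$'s.

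For the linearity of the assignment $f\mapsto\Lambda(f)$, I would fix a representation $v=\sum_{j=1}^{m}(\delta_{(x_{j},y_{j})}\boxtimes a_{j})$ and simply use bilinearity of the duality pairing $\langle \cdot,\cdot\rangle_{(F^{*},F)}$ in the second slot: for $f,h\in Lip(X,F)$ and $\lambda\in\mathbb{K}$,
\[
\Lambda(\lambda f + h)(v) = \sum_{j=1}^{m}\langle (\lambda f+h)x_{j}-(\lambda f+h)y_{j}, a_{j}\rangle = \lambda\Lambda(f)(v)+\Lambda(h)(v).
\]
Finally, for injectivity, suppose $\Lambda(f)=0$. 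Evaluating at the elementary tensors $v=\delta_{(x,0)}\boxtimes a$ with $x\in X$ and $a\in F^{*}$ arbitrary, and using the standing convention that Lipschitz maps send the base point to $0$ (so that $f(0)=0$), I obtain $\langle fx, a\rangle=0$ for every $a\in F^{*}$. By a Hahn--Banach separation this forces $fx=0$ for every $x\in X$, hence $f=0$. The only step requiring any genuine care is the well-definedness, which is the translation of the zero-tensor proposition through Lemma~\ref{Auto}; everything else is formal from the dual pair structure.
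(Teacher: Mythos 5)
Your proposal is correct, and since the paper omits its own proof (deferring to the cited Corollary~1.8 of Cabrera--Padilla et al.), the comparison is with the natural argument that proof would mirror. The only remark worth making is that you work harder than necessary on well-definedness and on linearity of $\Lambda(f)$ in $v$: by construction $X\boxtimes F^{*}$ is a linear subspace of the algebraic dual of $Lip(X,F)$, and Lemma~\ref{Auto} identifies $\Lambda(f)(v)$ with the evaluation $v(f)$. Thus well-definedness is immediate (it does not need the zero-tensor characterization), and linearity of $v\mapsto v(f)$ is the tautology $(v_{1}+\lambda v_{2})(f)=v_{1}(f)+\lambda v_{2}(f)$, with no appeal to the concatenation lemma or to Lemma~1. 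Your arguments for linearity of $f\mapsto\Lambda(f)$ (bilinearity of the pairing in the $F^{*}$--$F$ slot) and for injectivity (evaluate on $\delta_{(x,0)}\boxtimes a$, use the base-point convention $f(0)=0$, and the Hahn--Banach separation that $F^{*}$ separates points of $F$) are exactly the expected ones and are correct. So: a sound proof, with a small amount of redundant machinery that the evaluation-functional viewpoint would have eliminated.
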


Similarly to \cite [Definition 2.1] {MJAM14} we introduce the concept of Lipschitz tensor product functional of a Lipschitz functional and a bounded linear functional as follows.

\begin{defi}
Let $X$ be a pointed metric space and $F$ a Banach space. Let $g\in X^{\#}$ and $a_{j}\in F^{*}$. The map 
$g\boxtimes e : X\boxtimes F^{*}\longrightarrow\mathbb{K}$, given by

$$\left(g\boxtimes e\right)(v)=\sum\limits_{j=1}^{m} \left(gx_{j}-gy_{j}\right)\cdot\left\langle a_{j}, e\right\rangle$$

for $v=\sum\limits_{j=1}^{m}\left(\delta_{(x_{j},y_{j})}\boxtimes a_{j}\right)\in X\boxtimes F^{*}$, is called the Lipschitz tensor product functional of $g$ and $e$.

By Lemma \ref{Auto} note that

$$\left(g\boxtimes e\right)(v)=\sum\limits_{j=1}^{m} \left((g\cdot e)x_{j}-(g\cdot e)y_{j}\right)\cdot\left\langle a_{j}, e\right\rangle=v(g\cdot e).$$
\end{defi}

Similarly to \cite [Lemma 2.1] {MJAM14}, the following result which follows easily from this formula gathers some properties of these functionals.

\begin{lem}
Let $g\in X^{\#}$ and $e\in F$. The functional $\left(g\boxtimes e\right): X\boxtimes F^{*}\longrightarrow\mathbb{K}$ is a well--defined linear map satisfying $\lambda\left(g\boxtimes e\right)=\left(\lambda\cdot g\right)\boxtimes e=g\boxtimes \left(\lambda\cdot e\right)$ for any $\lambda\in\mathbb{K}$. Moreover $\left(g_{1}+g_{2}\right)\boxtimes e=g_{1}\boxtimes e + g_{2}\boxtimes e$ for all $g_{1}, g_{2}\in X^{\#}$ and $g\boxtimes \left(e_{1} + e_{2} \right)=g\boxtimes e_{1} + g\boxtimes e_{2}$ for all $e_{1},\; e_{2}\in F$.
\end{lem}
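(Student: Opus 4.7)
The strategy is to exploit the identity, already observed immediately after the definition, that
\[
\left(g\boxtimes e\right)(v)=v(g\cdot e)
\]
for every $v=\sum_{j=1}^{m}\bigl(\delta_{(x_{j},y_{j})}\boxtimes a_{j}\bigr)\in X\boxtimes F^{*}$. Once this reformulation is in hand, every claim in the lemma reduces to a property of the bilinear map $(g,e)\mapsto g\cdot e$ (Lemma \ref{Auto1}) combined with the fact that each $v$ is a well--defined linear functional on $Lip(X,F)$ (this was established when we set up the dual pair $\langle X\boxtimes F^{*},Lip(X,F)\rangle$).

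First I would verify that $g\boxtimes e$ is well--defined. Lemma \ref{Auto1} gives $g\cdot e\in Lip(X,F)$, so $v(g\cdot e)$ makes sense; since $v$, regarded as an element of $(Lip(X,F))'$ via the dual pairing, depends only on $v$ and not on the chosen representation, the value $(g\boxtimes e)(v)$ is independent of the representation used in the definition. Linearity of $g\boxtimes e$ on $X\boxtimes F^{*}$ then follows from the concatenation formula of Lemma~3 and from $\lambda(\delta_{(x,y)}\boxtimes a)=\delta_{(x,y)}\boxtimes(\lambda a)$: writing $v_{1}+v_{2}$ and $\lambda v$ by means of the explicit representations produces, under the formula $(g\boxtimes e)(v)=v(g\cdot e)$, precisely the sum and scalar multiple of the corresponding scalars.

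For the remaining identities, Lemma \ref{Auto1} (and the very definition of $g\cdot e$) gives, pointwise on $X$,
\[
(\lambda g)\cdot e=\lambda(g\cdot e)=g\cdot(\lambda e),\qquad (g_{1}+g_{2})\cdot e=g_{1}\cdot e+g_{2}\cdot e,\qquad g\cdot(e_{1}+e_{2})=g\cdot e_{1}+g\cdot e_{2},
\]
all three being members of $Lip(X,F)$. Applying $v$ to each of these equalities and using $(h\boxtimes d)(v)=v(h\cdot d)$ yields the three displayed identities of the lemma. For example, for any $v$,
\[
\bigl((\lambda g)\boxtimes e\bigr)(v)=v\bigl((\lambda g)\cdot e\bigr)=v\bigl(\lambda(g\cdot e)\bigr)=\lambda\,v(g\cdot e)=\lambda(g\boxtimes e)(v),
\]
and the other cases are entirely analogous.

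The only point that requires any genuine care is the well--definedness step, because the raw formula for $(g\boxtimes e)(v)$ is written in terms of a specific representation of $v$; the shortest route around this obstacle is precisely the reformulation $(g\boxtimes e)(v)=v(g\cdot e)$, which I would state and justify once and then use throughout. After that, everything reduces to the already--proved bilinearity of $(g,e)\mapsto g\cdot e$, so no further calculation of any depth is needed.
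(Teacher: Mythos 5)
Your proof is correct and takes essentially the route the paper itself points to: it says the lemma ``follows easily from this formula'' $(g\boxtimes e)(v)=v(g\cdot e)$, and you make that reduction explicit, deriving well--definedness and linearity from the fact that each $v\in X\boxtimes F^{*}$ is already a (representation--independent) linear functional on $Lip(X,F)$, and obtaining the three bilinearity identities from the corresponding pointwise identities for $(g,e)\mapsto g\cdot e$ of Lemma~\ref{Auto1}. The only cosmetic remark is that linearity in $v$ follows even more directly by observing that evaluation $v\mapsto v(g\cdot e)$ is tautologically linear on a space of linear functionals, so the appeal to the concatenation lemma, while valid, is more machinery than is needed.
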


Similarly to \cite [Definition 2.2] {MJAM14} we introduce the concept of associated Lipschitz tensor product space of $X\boxtimes F^{*}$.

\begin{defi}
Let $X$ be a pointed metric space and $F$ a Banach space. The space $X^{\#}\boxplus F$ is defined as the linear subspace of $\left(X\boxtimes F^{*}\right)'$ spanned by the set $\left\{g\boxtimes e : g\in X^{\#},\; e\in F\right\}$. This space is called the associated Lipschitz tensor product of $X\boxtimes F^{*}$. 
\end{defi}

Similarly to \cite [Lemma 2.2] {MJAM14} we also derive easily the following fact.

\begin{lem}
For any $\sum\limits_{j=1}^{m} g_{j}\boxtimes e_{j}\in X^{\#}\boxplus F$ and $\sum\limits_{j=1}^{m}\delta_{(x_{j},y_{j})}\boxtimes a_{j}\in X\boxtimes F^{*}$, we have

$$\left(\sum\limits_{j=1}^{m} g_{j}\boxtimes e_{j}\right)\left(\sum\limits_{j=1}^{m}\delta_{(x_{j},y_{j})}\boxtimes a_{j}\right)=\left(\sum\limits_{j=1}^{m}\delta_{(x_{j},y_{j})}\boxtimes a_{j}\right)\left(\sum\limits_{j=1}^{m} g_{j}\cdot e_{j}\right).$$

\end{lem}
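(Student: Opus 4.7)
The statement is really a bilinearity/linearity bookkeeping fact, and the key input has already been recorded in the paper: in the definition of the Lipschitz tensor product functional it was observed that for a single pair $g\in X^{\#}$, $e\in F$ and any $v\in X\boxtimes F^{*}$ one has the identity $(g\boxtimes e)(v)=v(g\cdot e)$. My plan is to take this as the atomic fact and extend it by linearity on both sides.

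First I would fix $v:=\sum_{j=1}^{m}\delta_{(x_{j},y_{j})}\boxtimes a_{j}\in X\boxtimes F^{*}$ and expand the left-hand side as a sum of linear functionals evaluated at $v$:
$$\left(\sum_{k=1}^{m}g_{k}\boxtimes e_{k}\right)(v)=\sum_{k=1}^{m}(g_{k}\boxtimes e_{k})(v),$$
which is legitimate because $X^{\#}\boxplus F$ was defined as a linear subspace of $(X\boxtimes F^{*})'$, so addition of its elements is pointwise. Then for each $k$ I would apply the identity recalled above to rewrite $(g_{k}\boxtimes e_{k})(v)=v(g_{k}\cdot e_{k})$, using Lemma \ref{Auto1} to guarantee that $g_{k}\cdot e_{k}\in Lip(X,F)$ so that the pairing $v(g_{k}\cdot e_{k})$ makes sense via the dual pair $\langle X\boxtimes F^{*},Lip(X,F)\rangle$.

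At this point the left-hand side equals $\sum_{k=1}^{m}v(g_{k}\cdot e_{k})$. The last step is to pull the finite sum inside the argument of $v$: since $v$, regarded through the dual pair, is a linear functional on $Lip(X,F)$, I may write $\sum_{k=1}^{m}v(g_{k}\cdot e_{k})=v\!\left(\sum_{k=1}^{m}g_{k}\cdot e_{k}\right)$, which is precisely the right-hand side of the claimed equality. Linearity of $v$ in this sense is the content of the dual-pair theorem stated just above (for $v=\sum_{j}\delta_{(x_{j},y_{j})}\boxtimes a_{j}$, the pairing $v(f)=\sum_{j}\langle fx_{j}-fy_{j},a_{j}\rangle$ is evidently linear in $f$).

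There is no real obstacle here; the only place one must be mildly careful is making sure not to conflate the two summation indices (the displayed statement uses the letter $m$ in both sums, but the argument is independent of this coincidence, and nothing in the chain of equalities requires the two lengths to agree). In particular, no representation-independence issue arises, because we are evaluating fixed representations on both sides and only invoking the linearity of the pairing.
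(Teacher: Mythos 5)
Your proof is correct and is exactly the intended argument: the paper omits the proof (citing the analogue in the referenced work) but has already recorded the atomic identity $(g\boxtimes e)(v)=v(g\cdot e)$ in the definition of the Lipschitz tensor product functional, and the lemma follows by extending this additively in $g\boxtimes e$ (since $X^{\#}\boxplus F\subset(X\boxtimes F^{*})'$ so sums act pointwise) and then using linearity of $v$ on $Lip(X,F)$ to pull the sum inside. Your side remark that the shared bound $m$ on both sums is incidental is also well taken.
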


Each element $v^{*}$ in  $X^{\#}\boxplus F$ has the form $\sum\limits_{j=1}^{m} \lambda_{j} \left(g_{j}\boxtimes e_{j}\right)$, where $m\in\mathbb{N}$, $\lambda_{j}\in\mathbb{K}$, $g_{j}\in X^{\#}$ and $e_{j}\in F$  but this representation is not unique. Since $\lambda\left(g\boxtimes e\right)=\left(\lambda g\right)\boxtimes e= g\boxtimes\left(\lambda e\right)$, each element of $X^{\#}\boxplus F$ can be expressed
as $\sum\limits_{j=1}^{m} g_{j}\boxtimes e_{j}$. This representation can be improved as follows.

The proof of the next lemma is similar to \cite [Lemma 2.3] {MJAM14} and is therefore omitted.

\begin{lem}
Every nonzero element $v^{*}$ in $X^{\#}\boxplus F$ has a representation $\sum\limits_{j=1}^{m} g_{j}\boxtimes e_{j}$ such that the functions
$g_{1},\cdots, g_{m}$ in $X^{\#}$ are nonzero and the elements $e_{1},\cdots, e_{m}$ in $F$ are linearly independent.
\end{lem}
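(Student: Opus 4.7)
The plan is to start from the representation guaranteed by the definition of $X^{\#}\boxplus F$ and then clean it up in two stages: first remove trivial summands, and then use the bilinearity relations to force linear independence of the $F$-components. Concretely, since $v^{*}\neq 0$ lies in $X^{\#}\boxplus F$, we may write $v^{*}=\sum_{j=1}^{n} g_{j}\boxtimes e_{j}$ for some $n\in\mathbb{N}$, $g_{j}\in X^{\#}$, $e_{j}\in F$. Using the identity $0\boxtimes e=0$ (which follows from the bilinearity lemma applied with $\lambda=0$), we discard every index with $g_{j}=0$; this cannot empty the representation, for otherwise $v^{*}=0$, contradicting the hypothesis.

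Next I would run a finite induction on the length of the representation. If the remaining vectors $e_{1},\dots,e_{m}$ are already linearly independent, we are done. Otherwise some $e_{k}$ is a linear combination of the others, say $e_{k}=\sum_{j\neq k}\lambda_{j}e_{j}$. Exploiting the identities
\[
g_{k}\boxtimes e_{k}=g_{k}\boxtimes\Bigl(\sum_{j\neq k}\lambda_{j}e_{j}\Bigr)=\sum_{j\neq k}g_{k}\boxtimes(\lambda_{j}e_{j})=\sum_{j\neq k}(\lambda_{j}g_{k})\boxtimes e_{j},
\]
recorded in the preceding lemma, together with the additivity $(g_{j}+\lambda_{j}g_{k})\boxtimes e_{j}=g_{j}\boxtimes e_{j}+(\lambda_{j}g_{k})\boxtimes e_{j}$, I rewrite
\[
v^{*}=\sum_{j\neq k}\bigl(g_{j}+\lambda_{j}g_{k}\bigr)\boxtimes e_{j},
\]
which is a representation with strictly fewer than $m$ terms. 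After at most $m$ such reductions the induction halts at a representation whose $F$-components are linearly independent.

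Finally, I clean up once more: in the resulting expression delete any summand whose scalar function $g_{j}$ has become zero. Since the remaining $e_{j}$'s are still linearly independent (a subset of an independent family), and since a total cancellation would again give $v^{*}=0$, what is left is an honest representation $\sum_{j=1}^{m}g_{j}\boxtimes e_{j}$ with all $g_{j}\neq 0$ and $\{e_{j}\}$ linearly independent in $F$.

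The only real obstacle is bookkeeping: ensuring that the reduction step actually lowers the number of summands and that the process terminates at a nonzero expression. Both are handled by the strict decrease of $m$ at each step and by the observation that a fully empty representation would contradict $v^{*}\neq 0$; all algebraic manipulations used are explicitly licensed by the bilinearity properties established in the previous lemma.
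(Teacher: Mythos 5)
Your argument is correct and is essentially the standard one; the paper itself omits this proof, deferring to \cite[Lemma 2.3]{MJAM14}, which (like the analogous minimal-length Lemma 2 earlier in this paper for $X\boxtimes F^{*}$) takes a representation of minimal length $m$ and derives a contradiction from linear dependence of the $e_{j}$'s or vanishing of some $g_{j}$, whereas you phrase the same reduction as an explicit decreasing iteration --- mathematically equivalent. One small cosmetic tightening: in the initial cleanup you may as well also discard terms with $e_{j}=0$ (using $g\boxtimes 0=0$), which makes the degenerate case of a length-one representation with $e_{1}=0$ disappear immediately rather than being absorbed into the dependence-reduction step.
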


For a pointed metric space $X$ and a Banach space $E$, we denote by $\textfrak{F}^{L}(X,F)$ the set of all Lipschitz finite rank operators from $X$ to $F$. Clearly, $Lip_{F}(X,F)$ is a linear subspace of $Lip(X,F)$. For any $g\in X^{\#}$ and $e\in F$, we consider in Lemma \ref{Auto1} the elements $g\cdot e$ of $Lip_{F}(X,F)$ defined by $(g\cdot e)(x) = g(x)\cdot e$ for all $x\in X$. Note that
rank$(g\cdot e) = 1$ if $g\neq 0$ and $e\neq 0$. Now we prove that these elements generate linearly the space $Lip_{F}(X,F)$.

The proof of the next lemma is similar to \cite [Lemma 2.4] {MJAM14} and is therefore omitted.

\begin{lem}
Every element $T\in Lip_{F}(X,F)$ has a representation in the form 
\begin{equation}\label{Auto2}
T=\sum\limits_{j=1}^{m} g_{j}\cdot e_{j},
\end{equation}
where $m=rank(f)$, $g_{1},\cdots, g_{m}$ in $X^{\#}$ and $e_{1},\cdots, e_{m}$ in $F$.
\end{lem}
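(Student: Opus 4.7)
The plan is to reduce the representation problem to choosing a basis of the finite-dimensional image space and extracting the coordinate functions via Hahn--Banach. Let $T\in Lip_{F}(X,F)$ and set $m:=\mathrm{rank}(T)$, so that $M:=\mathrm{span}(T(X))$ is an $m$-dimensional subspace of $F$. Fix a basis $e_{1},\ldots,e_{m}$ of $M$.

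For each $x\in X$, the vector $T(x)\in M$ has a unique expansion $T(x)=\sum_{j=1}^{m}g_{j}(x)\,e_{j}$ for uniquely determined scalars $g_{j}(x)\in\mathbb{K}$. This defines scalar-valued functions $g_{1},\ldots,g_{m}:X\to\mathbb{K}$, and the crux of the proof is to check that each $g_{j}$ belongs to $X^{\#}$. To do this, let $\pi_{j}:M\to\mathbb{K}$ denote the $j$-th coordinate functional relative to the fixed basis. Since $M$ is finite-dimensional, $\pi_{j}$ is automatically continuous, and by Hahn--Banach it extends to a bounded linear functional $\varphi_{j}\in F^{*}$. Then $g_{j}=\varphi_{j}\circ T$, which immediately gives that $g_{j}$ is Lipschitz with $Lip(g_{j})\leq \|\varphi_{j}\|\cdot Lip(T)$; and since $T(0)=0$ by our convention on pointed Lipschitz maps, $g_{j}(0)=\varphi_{j}(0)=0$. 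Hence $g_{j}\in X^{\#}$.

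It remains to assemble the representation. For every $x\in X$,
\[
T(x)=\sum_{j=1}^{m}g_{j}(x)\,e_{j}=\sum_{j=1}^{m}(g_{j}\cdot e_{j})(x),
\]
where the second equality uses the notation $(g\cdot e)(x)=g(x)\cdot e$ from Lemma~\ref{Auto1}. Thus $T=\sum_{j=1}^{m}g_{j}\cdot e_{j}$, and Lemma~\ref{Auto1} guarantees that each summand $g_{j}\cdot e_{j}$ is itself in $Lip(X,F)$, so this is a genuine identity in $Lip_{F}(X,F)$ with exactly $m=\mathrm{rank}(T)$ elementary terms.

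There is no serious obstacle here; the one point that deserves emphasis is that the boundedness of the coordinate functionals $\pi_{j}$ relies on $\dim M<\infty$, so the argument is genuinely specific to the finite-rank situation. Everything else is routine linear algebra combined with the closure of $X^{\#}$ under composition with bounded linear functionals on $F$.
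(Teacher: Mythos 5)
Your proof is correct and is essentially the standard argument (the paper itself omits the proof, deferring to \cite[Lemma 2.4]{MJAM14}, which runs along the same lines): fix a basis $e_1,\dots,e_m$ of $M=\mathrm{span}\,T(X)$, define $g_j$ by the coordinate expansion of $T(x)$ in that basis, and verify $g_j\in X^{\#}$ via boundedness of the coordinate functionals and the fact that $T(0)=0$. One small simplification you could note: the Hahn--Banach step is not actually needed, since $T$ takes values in $M$, so $g_j=\pi_j\circ T$ is already well-defined and Lipschitz with $Lip(g_j)\le\|\pi_j\|\,Lip(T)$ using only the continuity of $\pi_j$ on the finite-dimensional space $M$; extending $\pi_j$ to all of $F$ costs nothing but also adds nothing.
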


The proof of the next theorem is similar to \cite [Theorem 2.5] {MJAM14} and is therefore omitted.

\begin{lem}\label{Auto11}
Let $v^{*}=\sum\limits_{j=1}^{m} g_{j}\boxtimes e_{j}$ in $X^{\#}\boxplus F$. The map $\widetilde{v}$ from $X$ into $F$, defined by 
$$\widetilde{v}(x)=\sum\limits_{j=1}^{m} g_{j}(x)\cdot e_{j}\stackrel{def}{=} v^{*}(x),$$
is a linear isomorphism.
\end{lem}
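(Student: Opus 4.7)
The statement as written is slightly garbled, so I will read it in the spirit of \cite[Theorem 2.5]{MJAM14}: the correspondence $v^{*}\mapsto \widetilde{v}$ is a linear isomorphism between $X^{\#}\boxplus F$ and $Lip_{F}(X,F)$. The plan is to establish four facts in order: (i) the assignment $v^{*}\mapsto \widetilde{v}$ is well-defined, independent of the chosen representation of $v^{*}$; (ii) each $\widetilde{v}$ actually lies in $Lip_{F}(X,F)$; (iii) the correspondence is linear and injective; (iv) it is surjective.

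For well-definedness, the subtle point is that an element $v^{*}\in X^{\#}\boxplus F$ is by definition a functional on $X\boxtimes F^{*}$, so two representations $\sum g_{j}\boxtimes e_{j}=\sum g'_{k}\boxtimes e'_{k}$ agree when evaluated on every $v\in X\boxtimes F^{*}$. I would test the equality on the elementary tensors $\delta_{(x,0)}\boxtimes a$ with $x\in X$ and $a\in F^{*}$, use the convention $g(0)=0$ valid in $X^{\#}$, and obtain $\langle a,\sum_{j}g_{j}(x)e_{j}\rangle=\langle a,\sum_{k}g'_{k}(x)e'_{k}\rangle$ for every $a\in F^{*}$, which by Hahn--Banach forces $\sum_{j}g_{j}(x)e_{j}=\sum_{k}g'_{k}(x)e'_{k}$ pointwise on $X$. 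This is the step I expect to be the main obstacle, because it is the only place where one genuinely uses the dual-pair structure proved earlier rather than only manipulating formal symbols.

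Once $\widetilde{v}$ is well defined, fact (ii) follows from Lemma \ref{Auto1}: each summand $g_{j}\cdot e_{j}$ belongs to $Lip(X,F)$ with Lipschitz constant $Lip(g_{j})\cdot\|e_{j}\|$, and $\widetilde{v}=\sum_{j}g_{j}\cdot e_{j}$ is a finite sum of such rank-one Lipschitz maps, hence lies in $Lip_{F}(X,F)$. Linearity in (iii) is immediate from the formula $\widetilde{v}(x)=\sum_{j}g_{j}(x)\cdot e_{j}$ and the linear structure on $X^{\#}\boxplus F$. For injectivity, suppose $\widetilde{v}\equiv 0$ on $X$; then for any $v=\sum_{i}\delta_{(x_{i},y_{i})}\boxtimes a_{i}\in X\boxtimes F^{*}$ one computes
\[
v^{*}(v)=\sum_{i}\Bigl\langle a_{i},\ \sum_{j}g_{j}(x_{i})e_{j}-\sum_{j}g_{j}(y_{i})e_{j}\Bigr\rangle=\sum_{i}\langle a_{i},\widetilde{v}(x_{i})-\widetilde{v}(y_{i})\rangle=0,
\]
so $v^{*}=0$ as a functional on $X\boxtimes F^{*}$.

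Finally, surjectivity (iv) is precisely the content of the lemma preceding this one: every $T\in Lip_{F}(X,F)$ admits a representation $T=\sum_{j=1}^{m} g_{j}\cdot e_{j}$ with $g_{j}\in X^{\#}$ and $e_{j}\in F$; setting $v^{*}=\sum_{j=1}^{m}g_{j}\boxtimes e_{j}\in X^{\#}\boxplus F$ yields $\widetilde{v}(x)=\sum_{j}g_{j}(x)\cdot e_{j}=T(x)$ for all $x\in X$. Combining the four steps, the assignment $v^{*}\mapsto\widetilde{v}$ provides the claimed linear isomorphism $X^{\#}\boxplus F\cong Lip_{F}(X,F)$.
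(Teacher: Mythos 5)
Your proof is correct, and since the paper omits this proof (deferring to \cite[Theorem 2.5]{MJAM14}), you have supplied the argument the paper implicitly relies on; your reinterpretation of the garbled statement as the assertion that $v^{*}\mapsto\widetilde{v}$ is a linear isomorphism $X^{\#}\boxplus F \to Lip_{F}(X,F)$ is the only sensible reading. The four-step structure --- well-definedness via evaluation on tensors $\delta_{(x,0)}\boxtimes a$ together with $g(0)=0$ and Hahn--Banach, membership in $Lip_{F}(X,F)$ via Lemma~\ref{Auto1}, injectivity via the dual-pair computation, and surjectivity via the preceding representation lemma --- is exactly the expected argument, and each step checks out.
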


\begin{rem}\label{Auto9}
Let $g\in X^{\#}$ and $e\in F$. Obviously $$g\boxtimes e : x \longrightarrow \left\langle x, g\right\rangle\cdot e$$
is a Lipschitz finite rank operator. Also we have $Lip(g\boxtimes e)= Lip(g)\cdot\left\|e\right\|$.
\end{rem}

Recall that the definition of operator ideal between arbitrary Banach spaces of A. Pietsch \cite {P07} and \cite{P87} is as follows. Suppose that, for every pair of Banach spaces $E$ and $F$, we are given a subset $\mathfrak{A}(E,F)$ of $\mathfrak{L}(E,F)$. The class  
$$\mathfrak{A}:=\bigcup_{E,F}\mathfrak{A}(E,F),$$
is said to be an \textbf{operator ideal}, or just an \textbf{ideal}, if the following conditions are satisfied:

\begin{description}\label{Auto5}
	\item[$\bf (OI_0)$] $a^{*}\otimes e\in\mathfrak{A}(E,F)$ for $a^{*}\in E^{*}$ and $e\in F$.
	\item[$\bf (OI_1)$] $S + T\in\mathfrak{A}(E,F)$ for $S,\: T\in\mathfrak{A}(E,F)$.
	\item[$\bf (OI_2)$] $BTA\in\mathfrak{A}(E_{0},F_{0})$ for $A\in \mathfrak{L}(E_{0},E)$, $T\in\mathfrak{A}(E,F)$, and $B\in\mathfrak{L}(F,F_{0})$.
\end{description}

Condition $\bf (OI_0)$ implies that $\mathfrak{A}$ contains nonzero operators. \\ Building upon the linear version of aforementioned concept ''operator ideals'' we present three types of nonlinear versions of operator ideals as follow. 

The fist type is called nonlinear ideal between arbitrary Banach spaces $E$ and $F$ as follow.

\begin{defi}
Suppose that, for every pair of Banach spaces $E$ and $F$, we are given a subset $\textswab{A}^{L}(E,F)$ of $Lip(E,F)$. The class 
 $$\textswab{A}^{L}:=\bigcup_{E,F}\textswab{A}^{L}(E,F),$$
is said to be a \textbf{nonlinear operator ideal}, or just a \textbf{nonlinear ideal}, if the following conditions are satisfied:

\begin{description}\label{Auto6}
	\item[$\bf (NOI_0)$] $h\boxtimes e\in\textswab{A}^{L}(E,F)$ for $h\in E^{\#}$ and $e\in F$.
	\item[$\bf (NOI_1)$] $S + T\in\textswab{A}^{L}(E,F)$ for $S,\: T\in\textswab{A}^{L}(E,F)$.
	\item[$\bf (NOI_2)$] $BTA\in\textswab{A}^{L}(E_{0},F_{0})$ for $A\in Lip(E_{0},E)$, $T\in\textswab{A}^{L}(E,F)$, and $B\in\mathfrak{L}(F,F_{0})$.
\end{description}
\end{defi}

Condition $\bf (NOI_0)$ implies that $\textswab{A}^{L}$ contains nonzero Lipschitz operators. 

\begin{rem}
Suppose that, for every pair of metric spaces $X$ and Banach spaces $F$, the class
$$Lip:=\bigcup_{X,F} Lip(X,F),$$ stands for all Lipschitz maps acting between arbitrary metric spaces and Banach spaces.
\end{rem}

The second type is called nonlinear ideal between arbitrary metric spaces $X$ and arbitrary Banach spaces $F$ as follow.
\begin{defi}\label{Auto7}
Suppose that, for every pair of metric spaces $X$ and Banach spaces $F$, we are given a subset $\textfrak{A}^{L}(X,F)$ of $Lip(X,F)$. The class  
$$\textfrak{A}^{L}:=\bigcup_{X,F}\textfrak{A}^{L}(X,F),$$
is said to be a \textbf{nonlinear operator ideal}, or just a \textbf{nonlinear ideal}, if the following conditions are satisfied:

\begin{description}
	\item[$\bf (\widetilde{NOI_0})$] $g\boxtimes e\in\textfrak{A}^{L}(X,F)$ for $g\in X^{\#}$ and $e\in F$.
	\item[$\bf (\widetilde{NOI_1})$] $S + T\in\textfrak{A}^{L}(X,F)$ for $S,\: T\in\textfrak{A}^{L}(X,F)$.
	\item[$\bf (\widetilde{NOI_2})$] $BTA\in\textfrak{A}^{L}(X_{0},F_{0})$ for $A\in Lip(X_{0},X)$, $T\in\textfrak{A}^{L}(X,F)$, and $B\in\mathfrak{L}(F,F_{0})$.
\end{description}

\end{defi}

Condition $\bf (\widetilde{NOI_0})$ implies that $\textfrak{A}^{L}$ contains nonzero Lipschitz operators.

\begin{rem}
Suppose that, for every pair of metric spaces $X$ and $Y$, the class
$$\mathscr{Lip}:=\bigcup_{X,Y} \mathscr{L}(X,Y),$$ stands for all Lipschitz maps acting between arbitrary metric spaces $X$ and $Y$.
\end{rem}

The third type is called nonlinear ideal between arbitrary metric spaces $X$ and $Y$ as follow.
\begin{defi}\label{Auto7}
Suppose that, for every pair of metric spaces $X$ and $Y$, we are given a subset $\mathscr{A}^{L}(X,Y)$ of $\mathscr{Lip}(X,Y)$. The class  
$$\mathscr{A}^{L}:=\bigcup_{X,Y}\mathscr{A}^{L}(X,Y),$$
is said to be a \textbf{nonlinear operator ideal}, or just a \textbf{nonlinear ideal}, if the following conditions are satisfied:

\begin{description}
	\item[$\bf (\widetilde{\widetilde{NOI_0\,}})$] $\textfrak{A}^{L}\subset\mathscr{A}^{L}$.
	\item[$\bf (\widetilde{\widetilde{NOI_1\,}})$] $BTA\in\mathscr{A}^{L}(X_{0}, Y_{0})$ for $A\in \mathscr{L}(X_{0},X)$, $T\in\mathscr{A}^{L}(X, Y)$, and $B\in\mathscr{L}(Y, Y_{0})$.
\end{description}

\end{defi}

Mathematical interpretation of condition $\bf (\widetilde{\widetilde{NOI_0\,}})$ that the linear space $\textfrak{A}^{L}(X,F)$ define as follow. 
$$\textfrak{A}^{L}(X,F)=\left\{T\in Lip(X,F) : T\in\mathscr{A}^{L}(X,F)\right\},$$ which implies that $\mathscr{A}^{L}$ contains nonzero Lipschitz operators.

\begin{rem}
\begin{itemize}

	\item It is obvious that the so--called components $\textfrak{A}^{L}(X,F)$ of a given nonlinear ideal $\textfrak{A}^{L}$ are linear subspaces of the corresponding $Lip(X,F)$ and that $X^{\#}\boxplus F$ contained in $\textfrak{A}^{L}(X,F)$.

 \item In the Definition \ref{Auto6} and Definition \ref{Auto5}, for every pair of Banach spaces $E$ and $F$ if we define the linear spaces $\mathfrak{A}(E,F)$, $\textswab{A}^{L}(E,F)$, and $\textfrak{A}^{L}(E,F)$, respectively as follow:
$$\textfrak{A}^{L}(E,F)=\left\{T\in Lip(E,F) : T\in\mathscr{A}^{L}(E,F)\right\},$$
$$\textswab{A}^{L}(E,F)=\left\{T\in Lip(E,F) : T\in\textfrak{A}^{L}(E,F)\right\},$$
$$\mathfrak{A}(E,F)=\left\{T\in\mathfrak{L}(E,F) : T\in\textswab{A}^{L}(E,F)\right\},$$ then $\mathfrak{A}\subset\textswab{A}^{L}\subset\textfrak{A}^{L}\subseteq\mathscr{A}^{L}$.

\item Two nonlinear ideals $\textfrak{A}^{L}$ $\textfrak{B}^{L}$ are called equal, $\textfrak{A}^{L}=\textfrak{B}^{L}$, if they coincide component--wise. More generally, we shall frequently write $\textfrak{A}^{L}\subset\textfrak{B}^{L}$ to mean that $\textfrak{A}^{L}(X,F)\subset\textfrak{B}^{L}(X,F)$ holds for every pair of metric spaces $X$ and Banach spaces $F$.   

\end{itemize}

\end{rem}

\begin{prop}\label{Auto4}
Let $\textfrak{A}^{L}$ be a nonlinear ideal. Then all components $\textfrak{A}^{L}(X,F)$ are linear spaces.
\end{prop}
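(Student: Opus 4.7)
The plan is to verify the three defining axioms of a linear subspace for $\textfrak{A}^{L}(X,F)$ viewed as a subset of $Lip(X,F)$: namely that it contains the zero map, is closed under addition, and is closed under scalar multiplication. Two of these fall out of the definition almost for free, so the only real content is handling scalar multiplication, which must be engineered from axiom $\bf (\widetilde{NOI_2})$ since the definition of nonlinear ideal does not mention it explicitly.

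First I would address the presence of the zero operator. Taking $g = 0 \in X^{\#}$ and any $e \in F$ (or equivalently any $g$ and $e = 0$), Remark~\ref{Auto9} shows that $g \boxtimes e$ is the zero Lipschitz operator from $X$ to $F$, and axiom $\bf (\widetilde{NOI_0})$ therefore places $0 \in \textfrak{A}^{L}(X,F)$. Closure under addition is just a restatement of axiom $\bf (\widetilde{NOI_1})$, so no further argument is required there.

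The step requiring care is closure under scalar multiplication. Given $T \in \textfrak{A}^{L}(X,F)$ and $\lambda \in \mathbb{K}$, I would apply $\bf (\widetilde{NOI_2})$ with the specific choices $X_{0} = X$, $F_{0} = F$, $A = \mathrm{id}_{X} \in Lip(X,X)$, and $B = \lambda \cdot \mathrm{id}_{F} \in \mathfrak{L}(F,F)$. Then the composition $B T A$ equals $\lambda T$, and axiom $\bf (\widetilde{NOI_2})$ forces $\lambda T \in \textfrak{A}^{L}(X,F)$. I expect the main (minor) obstacle to be remembering that $A$ is allowed to be merely Lipschitz while $B$ must be linear, so one must scalar-multiply on the Banach-space side $F$ rather than pre-compose with a dilation on $X$ (which would also work, but is less natural and uses the wrong axiom slot).

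Combining these three observations shows that $\textfrak{A}^{L}(X,F)$ satisfies the subspace criterion inside the vector space $Lip(X,F)$ (which is itself a linear space under pointwise operations, since $F$ is a Banach space), concluding the proof. The same argument, verbatim with $X^{\#}$ replaced by $E^{\#}$ and $g \boxtimes e$ reinterpreted via the Banach-space version, applies to the ideals $\textswab{A}^{L}(E,F)$ of the first type.
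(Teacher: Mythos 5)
Your proof is correct and matches the paper's argument: both reduce the problem to closure under scalar multiplication and obtain it from the identity $\lambda\cdot T=(\lambda\cdot I_{F})\circ T\circ I_{X}$ together with $\bf (\widetilde{NOI_2})$, with closure under addition coming directly from $\bf (\widetilde{NOI_1})$. Your extra check that $0\in\textfrak{A}^{L}(X,F)$ via $\bf (\widetilde{NOI_0})$ is a harmless addition (it also follows from scalar multiplication by $\lambda=0$).
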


\begin{proof}
By $\bf (\widetilde{NOI_1})$ it remains to show that $T\in\textfrak{A}^{L}(X,F)$ and $\lambda\in\mathbb{K}$ imply $\lambda\cdot T\in\textfrak{A}^{L}(X,F)$. This follows from $\lambda\cdot T=\left(\lambda\cdot I_{F}\right)\circ T\circ I_{X}$ and $\bf (\widetilde{NOI_2})$. \\
\end{proof}

\begin{defi}
A nonlinear ideal $\textfrak{A}^{L}$ is said to be closed if all components $\textfrak{A}^{L}(X,F)$ are closed in $Lip(X,F)$ with respect to the Lipschitz operator norm.
\end{defi}

\begin{rem}
The adjectives '' largest'' and ''smallest'' used below in connection with nonlinear ideals always refer to this '' order relation''.
\end{rem}
Certain important nonlinear ideals of Lipschitz operators present as follow. 

\subsection{Largest Nonlinear Ideals}

The class $Lip:=\bigcup_{X,F} Lip(X,F)$ is the largest nonlinear ideal obviously obtained by considering all Lipschitz operators between metric spaces and Banach spaces.


\subsection{Lipschitz Finite Operators (Smallest Nonlinear Ideal)}
We write formula (\ref{Auto2}) in the language of Lipschitz tensor products. A Lipschitz operator $T\in Lip(X,F)$ is finite if it can be written in the form

\begin{equation}\label{Auto3}
T=\sum\limits_{j=1}^{m} g_{j}\boxtimes e_{j},
\end{equation}
where $g_{1},\cdots, g_{m}$ in $X^{\#}$ and $e_{1},\cdots, e_{m}$ in $F$. 

The class of all Lipschitz finite rank operators is denoted by $\textfrak{F}^{L}$.

\begin{lem}
$\textfrak{F}^{L}$ is a smallest nonlinear ideal. 
\end{lem}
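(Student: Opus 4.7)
The plan is to split the claim into two parts: verifying that $\textfrak{F}^{L}$ is a nonlinear ideal in the sense of Definition~\ref{Auto7}, and then checking that it sits inside every nonlinear ideal. Both parts reduce to routine manipulations using the representation in \eqref{Auto3} together with the axioms $(\widetilde{NOI_0})$--$(\widetilde{NOI_2})$, so no substantial obstacle is expected; the main care is only in confirming that the basic building blocks $g\boxtimes e$ behave well under composition with Lipschitz and bounded linear maps.

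First I would verify the three axioms for $\textfrak{F}^{L}$. Axiom $(\widetilde{NOI_{0}})$ is immediate, since $g\boxtimes e$ is by construction a Lipschitz operator of rank one (Remark~\ref{Auto9}), hence belongs to $\textfrak{F}^{L}(X,F)$. For $(\widetilde{NOI_{1}})$, if $S=\sum_{j=1}^{m}g_{j}\boxtimes e_{j}$ and $T=\sum_{k=1}^{m'}g_{k}'\boxtimes e_{k}'$ are two finite rank Lipschitz operators, then their sum admits the concatenated representation of length $m+m'$ and is therefore again in $\textfrak{F}^{L}(X,F)$. For $(\widetilde{NOI_{2}})$, let $A\in Lip(X_{0},X)$, $T=\sum_{j=1}^{m}g_{j}\boxtimes e_{j}\in\textfrak{F}^{L}(X,F)$ and $B\in\mathfrak{L}(F,F_{0})$. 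A direct computation using the definition of $g\boxtimes e$ (viewed as the operator $x\mapsto g(x)\cdot e$) gives
$$
(BTA)(x_{0})=\sum_{j=1}^{m}(g_{j}\circ A)(x_{0})\cdot B(e_{j})=\sum_{j=1}^{m}\bigl((g_{j}\circ A)\boxtimes B(e_{j})\bigr)(x_{0}).
$$
Here $g_{j}\circ A\in X_{0}^{\#}$ because $A$ is a Lipschitz map between pointed spaces (so $A(0)=0$) and $g_{j}$ is Lipschitz with $g_{j}(0)=0$, while $B(e_{j})\in F_{0}$. Hence $BTA$ has a finite representation of the required form and belongs to $\textfrak{F}^{L}(X_{0},F_{0})$.

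Second, I would show that $\textfrak{F}^{L}$ is contained in every nonlinear ideal $\textfrak{A}^{L}$. Pick any $T\in\textfrak{F}^{L}(X,F)$; by the representation lemma immediately preceding Lemma~\ref{Auto11} we may write $T=\sum_{j=1}^{m}g_{j}\boxtimes e_{j}$ with $g_{j}\in X^{\#}$ and $e_{j}\in F$. By $(\widetilde{NOI_{0}})$ each summand $g_{j}\boxtimes e_{j}$ lies in $\textfrak{A}^{L}(X,F)$, and by iterated application of $(\widetilde{NOI_{1}})$ the finite sum $T$ itself lies in $\textfrak{A}^{L}(X,F)$. Thus $\textfrak{F}^{L}(X,F)\subseteq\textfrak{A}^{L}(X,F)$ for every pair $(X,F)$, which is exactly the assertion that $\textfrak{F}^{L}$ is the smallest nonlinear ideal under the order relation introduced before the statement.

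The only minor technical point worth double-checking in the write-up is the verification that $g_{j}\circ A\in X_{0}^{\#}$, because the definition of $X^{\#}$ requires the base point to be preserved; this is built into the standing convention that Lipschitz maps between pointed metric spaces send base point to base point, and I would mention it explicitly so the ideal property is not just ``obviously'' quoted.
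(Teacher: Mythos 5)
Your proof is correct and follows essentially the same route as the paper: verify the three nonlinear ideal axioms for $\textfrak{F}^{L}$ (with the same computation $(BTA)(x_0)=\sum_j (g_j\circ A)(x_0)\cdot Be_j$ for $(\widetilde{NOI_2})$), and then observe that any $T\in\textfrak{F}^{L}(X,F)$ lands in every $\textfrak{A}^{L}(X,F)$; the only cosmetic difference is that the paper cites the earlier remark identifying $X^{\#}\boxplus F=\textfrak{F}^{L}(X,F)$ and noting $X^{\#}\boxplus F\subset\textfrak{A}^{L}(X,F)$, whereas you re-derive that inclusion directly from $(\widetilde{NOI_0})$ and iterated $(\widetilde{NOI_1})$.
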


\begin{proof}
By Lemma \ref{Auto11}, we have $X^{\#}\boxplus F=\textfrak{F}^{L}(X,F)$ for all metric spaces $X$ and Banach spaces $F$. Since $X^{\#}\boxplus F\subset\textfrak{A}^{L}(X,F)$ holds for every nonlinear ideal $\textfrak{A}^{L}$. This proves $\textfrak{F}^{L}\subseteq\textfrak{A}^{L}$. The verification of the nonlinear ideal properties is following. 

The condition $\bf (\widetilde{NOI_0})$ is hold. To prove the condition $\bf (\widetilde{NOI_1})$, let $T$ and $S$ in $\textfrak{F}^{L}(X,F)$. Then $\sum\limits_{j=1}^{m} g_{j}\cdot e_{j}$ and $\sum\limits_{j=1}^{n} g'_{j}\cdot e'_{j}$ be representation of $T$ and $S$, respectively. Then $\sum\limits_{j=1}^{m + n} g''_{j}\cdot e''_{j}$, where
\begin{equation}\nonumber 
g''_{j}=\left\{
\begin{array}{rl}
g _{j}       & \text{if } j=1,\cdots,    m    \\
g'_{j-m}     & \text{if } j=m + 1,\cdots, m + n \\
\end{array} \right.
\end{equation}
\begin{equation}\nonumber 
e''_{j}=\left\{
\begin{array}{rl}
e_{j}       & \text{if } j=1,\cdots,    m    \\
e'_{j-m}    & \text{if } j=m + 1,\cdots, m + n \\
\end{array} \right.
\end{equation}

is a representation of $T + S$. Since by the definition, both $g : X \longrightarrow \mathbb{K}$ and $A : X_{0} \longrightarrow X$ are Lipschitz maps with $A (0)=0$, then $g\circ A : X_{0}\longrightarrow \mathbb{K}$ is a Lipschitz map and $g\circ A\in X_{0}^{\#}$. Let $h=g\circ A$, $A\in Lip(X_{0},X)$, $T\in\textfrak{F}^{L}(X,F)$, and $B\in\mathfrak{L}(F,F_{0})$, to show that $BTA\in\textfrak{F}^{L}(X_{0},F_{0})$. 
\begin{align}
\left(BTA\right)(x_{0})=\left[B \left(\sum\limits_{j=1}^{m} g_{j}\cdot e_{j}\right) A\right](x_{0})&=\left[B \left(\sum\limits_{j=1}^{m} g_{j}\cdot e_{j}\right)\right] A(x_{0})=B\left[\left(\sum\limits_{j=1}^{m} g_{j}\cdot e_{j}\right) A(x_{0})\right]   \nonumber \\
&=B\left[\left(\sum\limits_{j=1}^{m} g_{j}(A x_{0})\cdot e_{j}\right)\right]=\sum\limits_{j=1}^{m} g_{j}(A x_{0})\cdot Be_{j}   \nonumber \\
&=\sum\limits_{j=1}^{m} \left(g_{j} \circ A\right) x_{0}\cdot Be_{j}=\sum\limits_{j=1}^{m} h_{j} (x_{0})\cdot Be_{j} \nonumber \\
&=\sum\limits_{j=1}^{m} \left(h_{j}\cdot Be_{j}\right)(x_{0}). \nonumber
\end{align}
Hence $\sum\limits_{j=1}^{m} h_{j}\cdot Be_{j}$ be a representation of $BTA$, the condition $\bf (\widetilde{NOI_2})$ satisfied. \\
\end{proof}

\subsection{Lipschitz Approximable Operators}\label{Auto 15}
Recall that the definition of Lipschitz approximable operators of A. Jim{\'e}nez--Vargas, J. M. Sepulcre, and Mois{\'e}s Villegas--Vallecillos, \cite{JAJM14} is as follows. A Lipschitz operator $T\in Lip(X,F)$ is called Lipschitz approximable if there are $T_{1}, T_{2}, T_{3},\cdots\in\textfrak{F}^{L}(X, F)$ with $\lim\limits_{n} Lip\left(T - T_{n}\right)=0$. 

The class of all Lipschitz approximable operators is denoted by $\textfrak{G}^{L}$.

\begin{lem}
$\textfrak{G}^{L}$ is a nonlinear ideal. 
\end{lem}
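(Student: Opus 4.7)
The plan is to verify the three axioms $\bf (\widetilde{NOI_0})$, $\bf (\widetilde{NOI_1})$, $\bf (\widetilde{NOI_2})$ from Definition \ref{Auto7} by reducing each to the corresponding fact for $\textfrak{F}^{L}$, which was established in the previous lemma, together with routine passage to the Lipschitz-norm limit. The underlying principle is that $\textfrak{G}^{L}(X,F)$ is, by definition, the Lipschitz-norm closure of $\textfrak{F}^{L}(X,F)$ inside $Lip(X,F)$, so any operation that is continuous in $Lip(\cdot)$ and preserves $\textfrak{F}^{L}$ automatically preserves $\textfrak{G}^{L}$.

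First I would dispose of $\bf (\widetilde{NOI_0})$: given $g\in X^{\#}$ and $e\in F$, Remark \ref{Auto9} shows $g\boxtimes e\in\textfrak{F}^{L}(X,F)$, and the constant approximating sequence $T_{n}:=g\boxtimes e$ satisfies $\lim_{n} Lip(T-T_{n})=0$, whence $g\boxtimes e\in\textfrak{G}^{L}(X,F)$. For $\bf (\widetilde{NOI_1})$, pick $T,S\in\textfrak{G}^{L}(X,F)$ with approximating sequences $T_{n},S_{n}\in\textfrak{F}^{L}(X,F)$. By the previous lemma $T_{n}+S_{n}\in\textfrak{F}^{L}(X,F)$, and the triangle inequality for the Lipschitz seminorm yields
\begin{equation}\nonumber
Lip\bigl((T+S)-(T_{n}+S_{n})\bigr)\le Lip(T-T_{n})+Lip(S-S_{n})\longrightarrow 0,
\end{equation}
so $T+S\in\textfrak{G}^{L}(X,F)$.

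For $\bf (\widetilde{NOI_2})$, fix $A\in Lip(X_{0},X)$, $T\in\textfrak{G}^{L}(X,F)$ and $B\in\mathfrak{L}(F,F_{0})$, and choose $T_{n}\in\textfrak{F}^{L}(X,F)$ with $Lip(T-T_{n})\to 0$. By the previous lemma, $BT_{n}A\in\textfrak{F}^{L}(X_{0},F_{0})$ for every $n$. The key estimate is the submultiplicativity of $Lip(\cdot)$ combined with the compatibility of the Lipschitz seminorm with pre-composition by a Lipschitz map and post-composition by a bounded linear map, which gives
\begin{equation}\nonumber
Lip(BTA-BT_{n}A)=Lip\bigl(B(T-T_{n})A\bigr)\le \|B\|_{\mathfrak{L}(F,F_{0})}\cdot Lip(T-T_{n})\cdot Lip(A)\longrightarrow 0.
\end{equation}
Hence $BTA\in\textfrak{G}^{L}(X_{0},F_{0})$, and all three axioms are satisfied.

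There is no real obstacle here; the only point that requires a brief sanity check is the submultiplicative estimate in the displayed line above, which is standard but should be cited or verified on the spot, since it is exactly the continuity statement that makes the closure $\textfrak{G}^{L}$ inherit ideal behaviour from $\textfrak{F}^{L}$. Everything else is bookkeeping on approximating sequences.
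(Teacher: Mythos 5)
Your proof follows essentially the same route as the paper's: all three axioms are reduced to the corresponding facts for $\textfrak{F}^{L}$ and then pushed through by passing to the Lipschitz-norm limit of the approximating sequences. The only difference is that you spell out the triangle inequality and the submultiplicative estimate $Lip(B(T-T_{n})A)\le\|B\|\,Lip(T-T_{n})\,Lip(A)$ explicitly, which the paper leaves implicit but which is indeed the crux of the continuity argument.
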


\begin{proof}
The Lipschitz tensor product $g\boxtimes e\in\textfrak{G}^{L}(X, F)$. Since there is a sequence $\left(T_{n}\right)_{n\in\mathbb{N}}=g\boxtimes e$ in $\textfrak{F}^{L}(X, F)$ with $\lim\limits_{n} Lip\left(g\boxtimes e - T_{n}\right)=0$. To prove the condition $\bf (\widetilde{NOI_1})$, let $T$ and $S$ in $\textfrak{G}^{L}(X,F)$. Then there are sequences $\left(T_{n}\right)_{n\in\mathbb{N}}$ and $\left(S_{n}\right)_{n\in\mathbb{N}}$ in $\textfrak{F}^{L}(X, F)$ with $\lim\limits_{n} Lip\left(T - T_{n}\right)=\lim\limits_{n} Lip\left(S - S_{n}\right)=0$. From Proposition \ref{Auto4} there is a sequence $\left((T_{n} + S_{n})\right)_{n\in\mathbb{N}}$ in $\textfrak{F}^{L}(X, F)$ such that $\lim\limits_{n} Lip\left(T + S - (T_{n} + S_{n})\right)=0$.

Let $A\in Lip(X_{0},X)$, $T\in\textfrak{G}^{L}(X, F)$, and $B\in\mathfrak{L}(F,F_{0})$. Then there is a sequence $\left(T_{n}\right)_{n\in\mathbb{N}}$ in $\textfrak{F}^{L}(X, F)$ with $\lim\limits_{n} Lip\left(T - T_{n}\right)=0$. To show that $BTA\in\textfrak{G}^{L}(X_{0},F_{0})$, by using the nonlinear composition ideal property there is a sequence $\left((BT_{n}A)\right)_{n\in\mathbb{N}}$ in $\textfrak{F}^{L}(X_{0},F_{0})$ such that $\lim\limits_{n} Lip\left(BTA - BT_{n}A \right)$\\=0, the condition $\bf (\widetilde{NOI_2})$ satisfied. \\
\end{proof}

\subsection{Lipschitz Compact Operators}
Recall that the definition of Lipschitz compact operators of A. Jim{\'e}nez--Vargas, J. M. Sepulcre, and Mois{\'e}s Villegas--Vallecillos, \cite{JAJM14} is as follows. A Lipschitz operator $T\in Lip(X,F)$ is called Lipschitz compact if its Lipschitz image is relatively compact in $F$.  

The class of all Lipschitz compact operators is denoted by $\textfrak{R}^{L}$. By \cite{JAJM14} the following result is evident.

\begin{lem}
$\textfrak{R}^{L}$ is a nonlinear ideal. 
\end{lem}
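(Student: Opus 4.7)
The plan is to verify the three axioms $\bf(\widetilde{NOI_0})$, $\bf(\widetilde{NOI_1})$, $\bf(\widetilde{NOI_2})$ by working directly with the characterization that $T\in\textfrak{R}^{L}(X,F)$ if and only if the Lipschitz image set
\[
\mathrm{Im}_{L}(T):=\left\{\frac{Tx-Ty}{d(x,y)}: x,y\in X,\; x\neq y\right\}
\]
is relatively compact in $F$. Each axiom will correspond to a standard stability property of relatively compact subsets under addition, scaling, and continuous linear maps.

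For $\bf(\widetilde{NOI_0})$, I would observe that $(g\boxtimes e)(x)=g(x)\cdot e$, so every difference quotient of $g\boxtimes e$ lies in the compact interval $[-Lip(g),Lip(g)]\cdot e$ in the one-dimensional subspace $\mathbb{K}\cdot e$; hence $g\boxtimes e\in\textfrak{R}^{L}(X,F)$. For $\bf(\widetilde{NOI_1})$, given $S,T\in\textfrak{R}^{L}(X,F)$ one has the inclusion $\mathrm{Im}_{L}(S+T)\subseteq\mathrm{Im}_{L}(S)+\mathrm{Im}_{L}(T)$, and the right-hand side is contained in $\overline{\mathrm{Im}_{L}(S)}+\overline{\mathrm{Im}_{L}(T)}$, which is the continuous image of the compact product $\overline{\mathrm{Im}_{L}(S)}\times\overline{\mathrm{Im}_{L}(T)}$ under the addition map $F\times F\to F$, and therefore compact.

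For $\bf(\widetilde{NOI_2})$, fix $A\in Lip(X_{0},X)$, $T\in\textfrak{R}^{L}(X,F)$, and $B\in\mathfrak{L}(F,F_{0})$. For $x_{0}\neq y_{0}$ in $X_{0}$ with $Ax_{0}=Ay_{0}$ the corresponding difference quotient of $BTA$ is zero; otherwise one factorizes
\[
\frac{B(T(Ax_{0}))-B(T(Ay_{0}))}{d(x_{0},y_{0})}=\frac{d(Ax_{0},Ay_{0})}{d(x_{0},y_{0})}\;B\!\left(\frac{T(Ax_{0})-T(Ay_{0})}{d(Ax_{0},Ay_{0})}\right),
\]
where the scalar coefficient lies in $[0,Lip(A)]$ and the vector inside $B$ lies in $\mathrm{Im}_{L}(T)$. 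Consequently
\[
\mathrm{Im}_{L}(BTA)\subseteq\{0\}\cup[0,Lip(A)]\cdot B(\mathrm{Im}_{L}(T)),
\]
and the right-hand side is relatively compact as the image of the compact set $[0,Lip(A)]\times\overline{B(\mathrm{Im}_{L}(T))}$ under scalar multiplication, using that the bounded linear map $B$ sends the relatively compact set $\mathrm{Im}_{L}(T)$ to a relatively compact subset of $F_{0}$.

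The only nontrivial point is step $\bf(\widetilde{NOI_2})$: a purely linear proof would apply $B$ directly to $\mathrm{Im}_{L}(TA)$, but since $A$ is merely Lipschitz and not linear, the auxiliary scalar factor $d(Ax_{0},Ay_{0})/d(x_{0},y_{0})$ must be introduced and absorbed (using its uniform bound $Lip(A)$) before invoking the standard compactness preservation properties of $B$. Everything else reduces to elementary manipulations with relatively compact sets.
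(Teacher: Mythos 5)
Your proof is correct, and it fills in an argument that the paper itself omits (the paper merely cites Jim\'enez-Vargas, Sepulcre, and Villegas-Vallecillos and declares the lemma ``evident''). Working directly with the Lipschitz image $\mathrm{Im}_{L}(T)$ is exactly the right device, and all three verifications are sound: $(\widetilde{NOI_0})$ because the difference quotients of $g\boxtimes e$ land in a bounded subset of the one-dimensional space $\mathbb{K}\cdot e$; $(\widetilde{NOI_1})$ because $\mathrm{Im}_{L}(S+T)$ sits inside the Minkowski sum $\mathrm{Im}_{L}(S)+\mathrm{Im}_{L}(T)$, which inherits relative compactness; and $(\widetilde{NOI_2})$ via the factorization of each difference quotient of $BTA$ into a scalar in $[0,Lip(A)]$ times an element of $B(\mathrm{Im}_{L}(T))$. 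You correctly identified the one subtlety, namely that $A$ is not linear, so one cannot simply write $\mathrm{Im}_{L}(TA)\subseteq\mathrm{Im}_{L}(T)$; the uniformly bounded auxiliary scalar $d(Ax_{0},Ay_{0})/d(x_{0},y_{0})$ handles this cleanly, together with a separate (trivial) treatment of pairs with $Ax_{0}=Ay_{0}$. Since the paper gives no proof to compare against, your reconstruction stands on its own, and it is the natural one.
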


\subsection{Lipschitz Weakly Compact Operators}
Recall that the definition of Lipschitz weakly compact operators of A. Jim{\'e}nez--Vargas, J. M. Sepulcre, and Mois{\'e}s Villegas--Vallecillos, \cite{JAJM14} is as follows. A Lipschitz operator $T\in Lip(X,F)$ is called Lipschitz weakly compact if its Lipschitz image is relatively weakly compact in $F$. 

The class of all Lipschitz compact operators is denoted by $\textfrak{W}^{L}$. By \cite{JAJM14} the following result is evident.

\begin{lem}
$\textfrak{W}^{L}$ is a nonlinear ideal. 
\end{lem}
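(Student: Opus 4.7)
The plan is to verify the three defining conditions $(\widetilde{NOI_0})$, $(\widetilde{NOI_1})$, $(\widetilde{NOI_2})$ of a nonlinear ideal for the class $\textfrak{W}^{L}$. Throughout, for $T\in Lip(X,F)$ let
$$\mathcal{I}(T):=\left\{\frac{Tx-Ty}{d(x,y)}\,:\, x,y\in X,\, x\neq y\right\}\subset F$$
denote the Lipschitz image; by definition $T\in\textfrak{W}^{L}(X,F)$ iff $\overline{\mathcal{I}(T)}^{\,w}$ is weakly compact in $F$.

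For $(\widetilde{NOI_0})$, given $g\in X^{\#}$ and $e\in F$ one computes directly $\mathcal{I}(g\boxtimes e)\subset [-Lip(g),Lip(g)]\cdot e$, which is a norm-bounded subset of the one-dimensional subspace $\mathbb{K}\cdot e$. Being relatively norm compact it is a fortiori relatively weakly compact, so $g\boxtimes e\in\textfrak{W}^{L}(X,F)$.

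For $(\widetilde{NOI_1})$, given $S,T\in\textfrak{W}^{L}(X,F)$ the Lipschitz quotient of $S+T$ at $(x,y)$ is the sum of the corresponding quotients of $S$ and $T$, so $\mathcal{I}(S+T)\subset\mathcal{I}(S)+\mathcal{I}(T)$. Since the sum of two relatively weakly compact sets is again relatively weakly compact (the weak closure is contained in $\overline{\mathcal{I}(S)}^{\,w}+\overline{\mathcal{I}(T)}^{\,w}$, a continuous image of a product of weakly compact sets under addition), the claim follows.

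For $(\widetilde{NOI_2})$, consider $A\in Lip(X_0,X)$, $T\in\textfrak{W}^{L}(X,F)$ and $B\in\mathfrak{L}(F,F_0)$. For $x_0\neq y_0$ in $X_0$ with $Ax_0\neq Ay_0$ I would write
$$\frac{BTAx_0-BTAy_0}{d(x_0,y_0)}=\frac{d(Ax_0,Ay_0)}{d(x_0,y_0)}\cdot B\!\left(\frac{TAx_0-TAy_0}{d(Ax_0,Ay_0)}\right),$$
so the scalar factor lies in $[0,Lip(A)]$ and the vector factor lies in $\mathcal{I}(T)$; the case $Ax_0=Ay_0$ contributes only $0$. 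Hence $\mathcal{I}(BTA)\subset B\bigl([0,Lip(A)]\cdot\overline{\mathcal{I}(T)}^{\,w}\bigr)$. The main (and essentially only) subtlety is the preservation of weak compactness through this operation: scalar multiplication $(\lambda,v)\mapsto \lambda v$ is jointly continuous from $\mathbb{R}\times(F,w)$ to $(F,w)$, so the image of the compact set $[0,Lip(A)]\times\overline{\mathcal{I}(T)}^{\,w}$ is weakly compact in $F$, and $B$, being bounded linear, is weak-to-weak continuous, so it maps this weakly compact set to a weakly compact set in $F_0$. Therefore $\overline{\mathcal{I}(BTA)}^{\,w}$ is weakly compact and $BTA\in\textfrak{W}^{L}(X_0,F_0)$, completing the verification. (All three steps are direct analogues of the corresponding arguments in \cite{JAJM14} for $\textfrak{R}^{L}$, replacing norm compactness by weak compactness.)
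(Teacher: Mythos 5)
Your proof is correct. Note that the paper itself supplies no argument here: it simply states the result is ``evident'' from \cite{JAJM14}, so there is no ``paper proof'' to compare against. What you have done is reconstruct the verification that the citation implicitly relies on, checking $(\widetilde{NOI_0})$, $(\widetilde{NOI_1})$, $(\widetilde{NOI_2})$ directly in terms of the Lipschitz image $\mathcal{I}(T)$. Each step is sound: $\mathcal{I}(g\boxtimes e)$ is bounded and one-dimensional, hence relatively weakly compact; $\mathcal{I}(S+T)\subset\mathcal{I}(S)+\mathcal{I}(T)$ and addition is weakly continuous; and your factorization of the difference quotient of $BTA$ through the scalar $d(Ax_0,Ay_0)/d(x_0,y_0)\in[0,Lip(A)]$ and an element of $\mathcal{I}(T)$ is exactly right, with the degenerate case $Ax_0=Ay_0$ absorbed by $0\in[0,Lip(A)]$. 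The joint continuity of $(\lambda,v)\mapsto\lambda v$ from $\mathbb{R}\times(F,w)$ to $(F,w)$ is legitimate (it holds in any topological vector space, in particular $(F,w)$), so $[0,Lip(A)]\cdot\overline{\mathcal{I}(T)}^{\,w}$ is weakly compact, and bounded linear $B$ is weak-to-weak continuous, completing the argument. In short: the approach is the natural one, it mirrors the compact case by replacing norm with weak compactness throughout, and it fills in the detail the paper leaves to the cited reference.
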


\subsection{Order Relations between Nonlinear Ideals}
The order relations are collected in the following chain of inclusions.
$$smallest:=\textfrak{F}^{L}\subset\textfrak{G}^{L}\subset\textfrak{R}^{L}\subset\textfrak{W}^{L}\subset {Lip}=:largest.$$

\subsection{$\mathsf{A}$--Factorable Lipschitz Operators}\label{Auto17}
Let $\mathsf{A}$ be a space ideal. A Lipschitz operator $T\in Lip(X,F)$ is called $\mathsf{A}$--factorable if there exists a factorization $T=BA$ such that $A\in Lip(X,\mathbf{M})$, $B\in\mathfrak{L}(\mathbf{M},F)$, and $\mathbf{M}\in\mathsf{A}$. 

The class of all $\mathsf{A}$--factorable Lipschitz Operators is denoted by $Op^{L}(\mathsf{A})$. 
\begin{prop}
$Op^{L}(\mathsf{A})$ is a nonlinear ideal. 
\end{prop}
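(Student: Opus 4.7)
The plan is to verify the three axioms $\bf (\widetilde{NOI_0})$--$\bf (\widetilde{NOI_2})$ in turn, exploiting two closure properties of any space ideal $\mathsf{A}$ in the sense of Pietsch: that $\mathbb{K}\in\mathsf{A}$, and that $\mathbf{M}_{1}\oplus\mathbf{M}_{2}\in\mathsf{A}$ whenever $\mathbf{M}_{1},\mathbf{M}_{2}\in\mathsf{A}$.

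For $\bf (\widetilde{NOI_0})$, I would factor the rank--one Lipschitz map $g\boxtimes e:X\to F$, $x\mapsto g(x)\cdot e$, through the one--dimensional space $\mathbb{K}\in\mathsf{A}$. Take $A:=g\in Lip(X,\mathbb{K})$ and $B:\mathbb{K}\to F$, $\lambda\mapsto\lambda e$, which lies in $\mathfrak{L}(\mathbb{K},F)$. Since $BA=g\boxtimes e$ and $\mathbb{K}\in\mathsf{A}$, this gives $g\boxtimes e\in Op^{L}(\mathsf{A})(X,F)$.

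The main obstacle is $\bf (\widetilde{NOI_1})$, because two summands need not factor through the same intermediate Banach space. Given $S,T\in Op^{L}(\mathsf{A})(X,F)$ with $S=B_{1}A_{1}$ through $\mathbf{M}_{1}\in\mathsf{A}$ and $T=B_{2}A_{2}$ through $\mathbf{M}_{2}\in\mathsf{A}$, I would form $\mathbf{M}:=\mathbf{M}_{1}\oplus\mathbf{M}_{2}\in\mathsf{A}$ and paste the factorizations together by means of the canonical injections $J_{1},J_{2}$ and projections $Q_{1},Q_{2}$ of Section~1: set $A:=J_{1}A_{1}+J_{2}A_{2}:X\to\mathbf{M}$ and $B:=B_{1}Q_{1}+B_{2}Q_{2}:\mathbf{M}\to F$. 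Then $A(0)=0$, $A$ is Lipschitz as a sum of Lipschitz maps (each $J_{i}A_{i}$ being the composition of the Lipschitz map $A_{i}$ with the bounded linear $J_{i}$), $B$ is bounded linear, and a direct computation gives $BA=B_{1}A_{1}+B_{2}A_{2}=S+T$. Hence $S+T$ factors through $\mathbf{M}\in\mathsf{A}$, so $S+T\in Op^{L}(\mathsf{A})(X,F)$.

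Finally $\bf (\widetilde{NOI_2})$ is immediate and requires no new intermediate space: given $T=B_{0}A_{0}$ through $\mathbf{M}\in\mathsf{A}$, $A\in Lip(X_{0},X)$ and $B\in\mathfrak{L}(F,F_{0})$, I would rewrite $BTA=(BB_{0})(A_{0}A)$, noting that $A_{0}A\in Lip(X_{0},\mathbf{M})$ as a composition of Lipschitz maps (with $A_{0}A(0)=0$) and $BB_{0}\in\mathfrak{L}(\mathbf{M},F_{0})$ as a composition of bounded linear maps. This exhibits $BTA$ as a factorization through the same $\mathbf{M}\in\mathsf{A}$, completing the verification.
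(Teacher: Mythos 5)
Your proof is correct and follows essentially the same route as the paper: factor $g\boxtimes e$ through $\mathbb{K}\in\mathsf{A}$, glue the two factorizations of $S$ and $T$ through $\mathbf{M}_1\times\mathbf{M}_2\in\mathsf{A}$ using the canonical injections and projections, and handle composition by absorbing $A$ and $B$ into the two factors. The only difference is that you spell out a few verifications (that $A(0)=0$, that the pasted maps are Lipschitz/bounded) that the paper leaves implicit.
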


\begin{proof}
The condition $\bf (\widetilde{NOI_0})$ satisfied. Since $\mathbb{K}\in\mathsf{A}$, we have $g\boxtimes e =(1\otimes e)\circ (g\boxtimes 1)\in Op_{(X, F)}^{L}(\mathsf{A})$, where $1\otimes e\in\mathfrak{L}(\mathbb{K},F)$ and $g\boxtimes 1\in Lip(X,\mathbb{K})$. 

To prove the condition $\bf (\widetilde{NOI_1})$, let $T_{i}\in Lip(X,F)$ be $\mathsf{A}$--factorable Lipschitz operators. Then $T_{i}=B_{i} A_{i}$ with $A_{i}\in Lip(X,\mathbf{M}_{i})$, $B_{i}\in\mathfrak{L}(\mathbf{M}_{i},F)$, and $\mathbf{M}_{i}\in\mathsf{A}$.

$$T_{1} + T_{2}=\left(B_{1}\circ Q_{1} + B_{2}\circ Q_{2}\right) \left(J_{1}\circ A_{1} + J_{2}\circ A_{2}\right).$$
Hence $T_{1} + T_{2}$ factors through $\mathbf{M}_{1}\times \mathbf{M}_{2}\in\mathsf{A}$. This proves that $T_{1} + T_{2}\in Op_{(X, F)}^{L}(\mathsf{A})$. 

Let $A\in Lip(X_{0},X)$, $T\in Op^{L}(\mathsf{A})$, and $B\in\mathfrak{L}(F,F_{0})$. Then $T$ admits a factorization 
$$T: X\stackrel{\widetilde{A}}{\longrightarrow} \mathbf{M}\stackrel{\widetilde{B}}{\longrightarrow} F,$$
where $\widetilde{B}\in\mathfrak{L}(\mathbf{M},F)$ and $\widetilde{A}\in Lip(X,\mathbf{M})$.  To show that $BTA\in Op^{L}$. We obtain $B\circ\widetilde{B}\in\mathfrak{L}\left(\mathbf{M}, F_{0}\right)$ and $\widetilde{A}\circ A\in Lip\left(X_{0}, \mathbf{M}\right)$. Hence the Lipschitz operator $BTA$ admits a factorization 
$$BTA: X_{0}\stackrel{\widetilde{\widetilde{A\,}}}{\longrightarrow} \mathbf{M}\stackrel{\widetilde{\widetilde{B\,}}}{\longrightarrow} F_{0},$$ where $\widetilde{\widetilde{B\,}}=B\circ\widetilde{B}$ and $\widetilde{\widetilde{A\,}}=\widetilde{A}\circ A$, hence the condition $\bf (\widetilde{NOI_2})$ satisfied. \\
\end{proof}






\subsection{Products of Nonlinear Ideals}
Let $\mathfrak{A}$ be an ideal and $\textfrak{A}^{L}$ be nonlinear ideals. A Lipschitz operator $T\in Lip(X,F)$ belongs to the product $\mathfrak{A}\circ\textfrak{A}^{L}$ if there is a factorization $T=B\circ A$ with $B\in\mathfrak{A}\left(M, F\right)$ and $A\in\textfrak{A}^{L}\left(X, M\right)$. Here $M$ is a suitable Banach space.

\begin{prop}\label{Auto8}
$\mathfrak{A}\circ\textfrak{A}^{L}$ is a nonlinear ideal. 
\end{prop}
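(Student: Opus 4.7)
The plan is to verify the three defining axioms $(\widetilde{NOI_0})$, $(\widetilde{NOI_1})$, $(\widetilde{NOI_2})$ by, in each case, producing an explicit factorization $T = B \circ A$ with $B \in \mathfrak{A}(M,F)$ and $A \in \textfrak{A}^{L}(X,M)$ for a suitable Banach space $M$. In each step the hypotheses already packaged into the ideal $\mathfrak{A}$ and the nonlinear ideal $\textfrak{A}^{L}$ do essentially all the work; the only place where genuine thought is required is $(\widetilde{NOI_1})$, where the factorization spaces of the two summands need not agree.

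For $(\widetilde{NOI_0})$, given $g\in X^{\#}$ and $e\in F$, I would factor $g\boxtimes e$ through the scalar field by writing $g\boxtimes e = (1\otimes e)\circ(g\boxtimes 1)$, exactly as in the proof for $Op^{L}(\mathsf{A})$. Here $g\boxtimes 1 \in \textfrak{A}^{L}(X,\mathbb{K})$ by $(\widetilde{NOI_0})$ applied to $\textfrak{A}^{L}$, and $1\otimes e\in\mathfrak{A}(\mathbb{K},F)$ by $(OI_0)$ applied to $\mathfrak{A}$. With $M=\mathbb{K}$ this displays $g\boxtimes e$ as an element of $\mathfrak{A}\circ\textfrak{A}^{L}(X,F)$.

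For $(\widetilde{NOI_1})$, suppose $T_{1}, T_{2}\in\mathfrak{A}\circ\textfrak{A}^{L}(X,F)$ with factorizations $T_{i}=B_{i}A_{i}$, where $A_{i}\in\textfrak{A}^{L}(X,M_{i})$ and $B_{i}\in\mathfrak{A}(M_{i},F)$. Using the Cartesian product $M_{1}\times M_{2}$ together with the canonical $J_{i}$ and $Q_{i}$ from the preliminaries, I would write
$$T_{1}+T_{2}=\bigl(B_{1}Q_{1}+B_{2}Q_{2}\bigr)\circ\bigl(J_{1}A_{1}+J_{2}A_{2}\bigr).$$
The right factor lies in $\textfrak{A}^{L}(X,M_{1}\times M_{2})$: each $J_{i}A_{i}$ is in $\textfrak{A}^{L}(X,M_{1}\times M_{2})$ by $(\widetilde{NOI_2})$ (postcomposition with the bounded linear $J_{i}$), and then $(\widetilde{NOI_1})$ closes under sums. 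The left factor lies in $\mathfrak{A}(M_{1}\times M_{2},F)$ by the analogous axioms $(OI_{1})$--$(OI_{2})$ for the linear ideal $\mathfrak{A}$. This is the step I expect to be the mild obstacle, because one has to remember that the two factorizations need to be merged through a product space before $(\widetilde{NOI_1})$ can be invoked.

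For $(\widetilde{NOI_2})$, let $A\in Lip(X_{0},X)$, $T\in\mathfrak{A}\circ\textfrak{A}^{L}(X,F)$ with factorization $T=\widetilde{B}\widetilde{A}$, $\widetilde{A}\in\textfrak{A}^{L}(X,M)$, $\widetilde{B}\in\mathfrak{A}(M,F)$, and $B\in\mathfrak{L}(F,F_{0})$. Then
$$BTA=(B\widetilde{B})\circ(\widetilde{A}A)$$
is the desired factorization through $M$: $\widetilde{A}A\in\textfrak{A}^{L}(X_{0},M)$ by $(\widetilde{NOI_{2}})$ for $\textfrak{A}^{L}$, and $B\widetilde{B}\in\mathfrak{A}(M,F_{0})$ by $(OI_{2})$ for $\mathfrak{A}$. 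This shows $BTA\in\mathfrak{A}\circ\textfrak{A}^{L}(X_{0},F_{0})$ and completes the verification.
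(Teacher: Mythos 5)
Your proof is correct and follows essentially the same route as the paper: factoring $g\boxtimes e$ through $\mathbb{K}$ for $(\widetilde{NOI_0})$, merging the two factorizations through the product space $M_1\times M_2$ via the canonical $J_i$, $Q_i$ for $(\widetilde{NOI_1})$, and composing through the same intermediate space for $(\widetilde{NOI_2})$. You also spell out explicitly why each factor lands in the right ideal, which the paper leaves more implicit.
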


\begin{proof}
The condition $\bf (\widetilde{NOI_0})$ satisfied. Since the elementary Lipschitz tensor $g\boxtimes e$ admits a factorization $$g\boxtimes e : X\stackrel{g\boxtimes 1}{\longrightarrow} \mathbb{K}\stackrel{1\otimes e}{\longrightarrow} F,$$
where $1\otimes e\in\mathfrak{A}\left(\mathbb{K}, F\right)$ and $g\boxtimes 1\in\textfrak{A}^{L}\left(X, \mathbb{K}\right)$. 

To prove the condition $\bf (\widetilde{NOI_1})$, let $T_{i}\in\mathfrak{A}\circ\textfrak{A}^{L}(X, F)$. Then $T_{i}=B_{i}\circ A_{i}$ with $B_{i}\in\mathfrak{A}\left(M_{i}, F\right)$ and $A_{i}\in\textfrak{A}^{L}\left(X, M_{i}\right)$. Put $B:=B_{1}\circ Q_{1} + B_{2}\circ Q_{2}$, $A:=J_{1}\circ A_{1} + J_{2}\circ A_{2}$, and $M:=M_{1}\times M_{2}$. Now $T_{1} + T_{2}= B\circ A$, $B\in\mathfrak{A}\left(M, F\right)$ and $A\in\textfrak{A}^{L}\left(X, M\right)$ imply $T_{1} + T_{2}\in\mathfrak{A}\circ\textfrak{A}^{L}(X, F)$. 

Let $A\in Lip(X_{0},X)$, $T\in\mathfrak{A}\circ\textfrak{A}^{L}(X,F)$, and $B\in\mathfrak{L}(F,F_{0})$. Then $T$ admits a factorization 
$$T: X\stackrel{\widetilde{A}}{\longrightarrow} M\stackrel{\widetilde{B}}{\longrightarrow} F,$$
where $\widetilde{B}\in\mathfrak{A}\left(M, F\right)$ and $\widetilde{A}\in\textfrak{A}^{L}\left(X, M\right)$.  To show that $BTA\in\mathfrak{A}\circ\textfrak{A}^{L}(X_{0},F_{0})$. By using non--linear composition ideal properties, we obtain $B\circ\widetilde{B}\in\mathfrak{A}\left(M, F_{0}\right)$ and $\widetilde{A}\circ A\in\textfrak{A}^{L}\left(X_{0}, M\right)$. Hence the Lipschitz operator $BTA$ admits a factorization 
$$BTA: X_{0}\stackrel{\widetilde{\widetilde{A\,}}}{\longrightarrow} M\stackrel{\widetilde{\widetilde{B\,}}}{\longrightarrow} F_{0},$$ where $\widetilde{\widetilde{B\,}}=B\circ\widetilde{B}$ and $\widetilde{\widetilde{A\,}}=\widetilde{A}\circ A$, hence the condition $\bf (\widetilde{NOI_2})$ satisfied. \\
\end{proof}

We raise the following problem which we think is interesting.

\begin{OP}
Is it true that $\mathfrak{B}\circ\textfrak{W}^{L}=\textfrak{R}^{L} ?$
\end{OP}



\subsection{Quotients of Nonlinear Ideals}
Let $\mathfrak{A}$ be an operator ideal and $\textfrak{A}^{L}$ be a nonlinear ideal. A Lipschitz operator $T\in Lip(X,F)$ belongs to the quotient $\mathfrak{A}^{-1}\circ\textfrak{A}^{L}$ if
$B\circ T\in\textfrak{A}^{L}(X,F_{0})$ for all $B\in\mathfrak{A}(F,F_{0})$, where $F_{0}$ is an arbitrary Banach space. 


\begin{rem}
The single symbol $\mathfrak{A}^{-1}$ is without any meaning.
\end{rem}

\begin{prop}\label{Auto13}
$\mathfrak{A}^{-1}\circ\textfrak{A}^{L}$ is a nonlinear ideal. 
\end{prop}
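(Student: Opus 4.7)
The plan is to verify the three axioms $\bf (\widetilde{NOI_0})$, $\bf (\widetilde{NOI_1})$, and $\bf (\widetilde{NOI_2})$ directly from the definition of the quotient: $T \in (\mathfrak{A}^{-1} \circ \textfrak{A}^{L})(X,F)$ iff $B \circ T \in \textfrak{A}^{L}(X,F_{0})$ for every Banach space $F_{0}$ and every $B \in \mathfrak{A}(F,F_{0})$. The core idea throughout is that the three conditions reduce quickly to the corresponding axioms for $\textfrak{A}^{L}$ itself, combined with the operator ideal axioms $\bf (OI_0)$--$\bf (OI_2)$ of $\mathfrak{A}$.

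For $\bf (\widetilde{NOI_0})$, I would take $g \in X^{\#}$, $e \in F$, and an arbitrary $B \in \mathfrak{A}(F,F_{0})$. Using Remark \ref{Auto9}, I would compute $B \circ (g \boxtimes e) = g \boxtimes (Be)$, since $\bigl(B \circ (g \boxtimes e)\bigr)(x) = B(g(x) \cdot e) = g(x) \cdot Be$. This is again an elementary Lipschitz tensor, hence lies in $X^{\#} \boxplus F_{0} \subset \textfrak{A}^{L}(X,F_{0})$ by $\bf (\widetilde{NOI_0})$ for $\textfrak{A}^{L}$. For $\bf (\widetilde{NOI_1})$, given $T,S \in (\mathfrak{A}^{-1}\circ\textfrak{A}^{L})(X,F)$ and any $B \in \mathfrak{A}(F,F_{0})$, both $B\circ T$ and $B\circ S$ lie in $\textfrak{A}^{L}(X,F_{0})$, so $B\circ(T+S) = B\circ T + B\circ S \in \textfrak{A}^{L}(X,F_{0})$ by $\bf (\widetilde{NOI_1})$ for $\textfrak{A}^{L}$. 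Since $F_{0}$ and $B$ were arbitrary, $T+S \in (\mathfrak{A}^{-1}\circ\textfrak{A}^{L})(X,F)$.

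The main step to set up correctly is $\bf (\widetilde{NOI_2})$. Given $A \in Lip(X_{0},X)$, $T \in (\mathfrak{A}^{-1}\circ\textfrak{A}^{L})(X,F)$ and $B \in \mathfrak{L}(F,F_{0})$, I must show $BTA \in (\mathfrak{A}^{-1}\circ\textfrak{A}^{L})(X_{0},F_{0})$. Fix any Banach space $F_{1}$ and any $C \in \mathfrak{A}(F_{0},F_{1})$; the goal is $C\circ(BTA) \in \textfrak{A}^{L}(X_{0},F_{1})$. Here the key observation is that $C\circ B \in \mathfrak{A}(F,F_{1})$ by the operator ideal property $\bf (OI_2)$ applied with the linear $B$ on the right. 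Hence, by hypothesis on $T$, $(CB)\circ T \in \textfrak{A}^{L}(X,F_{1})$. Finally I apply $\bf (\widetilde{NOI_2})$ for $\textfrak{A}^{L}$ with the Lipschitz map $A \in Lip(X_{0},X)$ on the right and the identity on $F_{1}$ on the left, which yields $C\circ(BTA) = \bigl((CB)\circ T\bigr)\circ A \in \textfrak{A}^{L}(X_{0},F_{1})$, as required.

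I expect no serious obstacle: the only subtlety is remembering that $\mathfrak{A}$ is a linear operator ideal, so composing $B \in \mathfrak{L}(F,F_{0})$ with $C \in \mathfrak{A}(F_{0},F_{1})$ really does land back in $\mathfrak{A}(F,F_{1})$ via $\bf (OI_2)$ of the linear ideal; this is precisely what makes the definition of the quotient well-behaved under $\bf (\widetilde{NOI_2})$. Everything else is a direct substitution into the axioms already verified for $\textfrak{A}^{L}$.
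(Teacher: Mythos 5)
Your proof is correct and follows essentially the same route as the paper: reduce each axiom to the corresponding axiom of $\textfrak{A}^{L}$, with the $\bf (\widetilde{NOI_2})$ step hinging on $C\circ B\in\mathfrak{A}(F,F_{1})$ via $\bf (OI_2)$ of the linear ideal $\mathfrak{A}$, followed by the composition axiom for $\textfrak{A}^{L}$. The only cosmetic difference is in $\bf (\widetilde{NOI_0})$, where you compute $B\circ(g\boxtimes e)=g\boxtimes(Be)$ explicitly, whereas the paper simply invokes the composition property $\bf (\widetilde{NOI_2})$ of $\textfrak{A}^{L}$ to conclude $B\circ(g\boxtimes e)\in\textfrak{A}^{L}(X,F_{0})$; both are valid.
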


\begin{proof}
Let $g\in X^{\#}$, $e\in F$ and $B$ be an arbitrary operator in $\mathfrak{A}(F,F_{0})$. Since $g\boxtimes e\in\textfrak{A}^{L}(X,F)$ and from nonlinear composition ideal property, we have $B\circ\left(g\boxtimes e\right)\in\textfrak{A}^{L}\left(X, F_{0}\right)$, hence $g\boxtimes e\in\textfrak{Y}^{L}$. 

Let $T_{i}\in\textfrak{Y}^{L}$ and let $B$ be an arbitrary operator in $\mathfrak{A}(F,F_{0})$. To prove the condition $\bf (\widetilde{NOI_1})$, i.e. to show that $B\circ \left(T_{1} + T_{2} \right)\in\textfrak{A}^{L}(X,F_{0})$. Let $x$ be an arbitrary element in $X$, from the linearity of the operator $B$ and Proposition \ref{Auto4}, we have  
\begin{align}
\left[B\circ \left(T_{1} + T_{2} \right)\right](x)&=B\left[\left(T_{1} + T_{2} \right)(x)\right]   \nonumber \\
&=B\left[T_{1}(x) + T_{2}(x)\right]                                                                \nonumber \\
&=B\left(T_{1} x\right) + B\left(T_{2} x\right)                                                    \nonumber \\
&=\left(B\circ T_{1}\right) (x) + \left(B\circ T_{2}\right) (x).                                   \nonumber 
\end{align}
Hence $B\circ \left(T_{1} + T_{2} \right)=B\circ T_{1} + B\circ T_{2}$. 

Let $A\in Lip(X_{0},X)$, $T\in\textfrak{Y}^{L}(X,F)$, and $B\in\mathfrak{L}(F,F_{0})$. Then $\widetilde{B}\circ T\in\textfrak{A}^{L}(X,G)$ for all $\widetilde{B}\in\mathfrak{A}(F,G)$, where $G$ is an arbitrary Banach space. To show that $BTA\in\textfrak{Y}^{L}(X_{0},F_{0})$. Let $\widetilde{\widetilde{B\,}}$ be an arbitrary operator in $\mathfrak{A}(F_{0},G)$, by using the non--linear composition ideal property, and the aforementioned assumption we have $\widetilde{\widetilde{B\,}}\circ B\in\mathfrak{A}(F,G)$, $(\widetilde{\widetilde{B\,}}\circ B)\circ T\in\textfrak{A}^{L}(X,G)$ and $\left[(\widetilde{\widetilde{B\,}}\circ B)\circ T\right]\circ A\in\textfrak{A}^{L}(X_{0},G)$. Hence $\widetilde{\widetilde{B\,}}\circ\left(BTA\right)\in\textfrak{A}^{L}(X_{0},G)$. The condition $\bf (\widetilde{NOI_2})$ satisfied. \\ 
\end{proof}

\begin{rem}
\begin{itemize}
	\item  A Lipschitz operator belonging to the nonlinear ideal $\mathfrak{B}^{-1}\circ\textfrak{R}^{L}$ should be called a \textbf{Rosenthal Lipschitz operator}.
	\item  A Lipschitz operator belonging to the nonlinear ideal $\mathfrak{X}^{-1}\circ\textfrak{W}^{L}$ should be called a \textbf{Grothendieck Lipschitz operator}.
  
\end{itemize}
\end{rem}
\section{Operator Ideals with Special Properties}

\subsection{Lipschitz Procedures}
A rule $$new: \mathfrak{A}\longrightarrow \mathfrak{A}^{L}_{new}$$

which defines a new nonlinear ideal $\mathfrak{A}^{L}_{new}$ for every ideal  $\mathfrak{A}$ is called a \textbf{semi--Lipschitz procedure}.

A rule $$new : \textswab{A}^{L}\longrightarrow \textswab{A}^{L}_{new}$$

which defines a new nonlinear ideal $\textswab{A}^{L}_{new}$ for every nonlinear ideal $\textswab{A}^{L}$ is called a \textbf{Lipschitz procedure}.

A rule $$new : \textfrak{A}^{L}\longrightarrow \textfrak{A}^{L}_{new}$$

which defines a new nonlinear ideal $\textfrak{A}^{L}_{new}$ for every nonlinear ideal  $\textfrak{A}^{L}$ is called a \textbf{Lipschitz procedure}.

\begin{rem}
We now list the following special property:

\begin{description}
	\item[$\bf (M)$] If $\textfrak{A}^{L}\subseteq\textfrak{B}^{L}$, then $\textfrak{A}^{L}_{new}\subseteq\textfrak{B}^{L}_{new}$  $\left(\textbf{strong monotony}\right)$.
	
	\item[$\bf (M')$] If $\mathfrak{A}\subseteq\mathfrak{B}$, then $\textfrak{A}^{L}_{new}\subseteq\textfrak{B}^{L}_{new}$  $\left(\textbf{monotony}\right)$.
	
	\item[$\bf (I)$] $\left(\textfrak{A}^{L}_{new}\right)_{new}=\textfrak{A}^{L}_{new}$ for all $\textfrak{A}^{L}$  $\left(\textbf{idempotence}\right)$.
	
\end{description}

A strong monotone and idempotent Lipschitz procedure is called a \textbf{hull Lipschitz procedure} if $\textfrak{A}^{L}\subseteq\textfrak{A}^{L}_{new}$. 
\end{rem}
\subsection{Closed Nonlinear Ideals}
Let $\textfrak{A}^{L}$ be a nonlinear ideal. A Lipschitz operator $T\in Lip(X,F)$ belongs to the closure $\textfrak{A}^{L}_{clos}$ if there are $T_{1}, T_{2}, T_{3},\cdots\in\textfrak{A}^{L}(X, F)$ with $\lim\limits_{n} Lip\left(T - T_{n}\right)=0$. 

\begin{prop}
$\textfrak{A}^{L}_{clos}$ is a nonlinear ideal. 
\end{prop}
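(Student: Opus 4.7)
The plan is to verify each of the three nonlinear ideal axioms in turn, exploiting that the Lipschitz seminorm behaves well under addition and under pre-/post-composition by Lipschitz, resp.\ bounded linear, maps, and that $\textfrak{A}^{L}$ already satisfies the corresponding axioms.

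First I would check $\bf(\widetilde{NOI_0})$: given $g\in X^{\#}$ and $e\in F$, the operator $g\boxtimes e$ already lies in $\textfrak{A}^{L}(X,F)$ since $\textfrak{A}^{L}$ is a nonlinear ideal, so the constant sequence $T_{n}:=g\boxtimes e$ witnesses $g\boxtimes e\in\textfrak{A}^{L}_{clos}(X,F)$ (in particular this also shows $\textfrak{A}^{L}\subseteq\textfrak{A}^{L}_{clos}$).

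For $\bf(\widetilde{NOI_1})$, given $S,T\in\textfrak{A}^{L}_{clos}(X,F)$, pick approximating sequences $(S_{n})$ and $(T_{n})$ in $\textfrak{A}^{L}(X,F)$ with $Lip(S-S_{n})\to 0$ and $Lip(T-T_{n})\to 0$. By $\bf(\widetilde{NOI_1})$ applied to $\textfrak{A}^{L}$ each $S_{n}+T_{n}$ lies in $\textfrak{A}^{L}(X,F)$, and by the triangle inequality for the Lipschitz seminorm,
$$Lip\bigl((S+T)-(S_{n}+T_{n})\bigr)\le Lip(S-S_{n})+Lip(T-T_{n})\longrightarrow 0,$$
so $S+T\in\textfrak{A}^{L}_{clos}(X,F)$.

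For $\bf(\widetilde{NOI_2})$, take $A\in Lip(X_{0},X)$, $T\in\textfrak{A}^{L}_{clos}(X,F)$ and $B\in\mathfrak{L}(F,F_{0})$, and pick $(T_{n})\subset\textfrak{A}^{L}(X,F)$ with $Lip(T-T_{n})\to 0$. Applying $\bf(\widetilde{NOI_2})$ for $\textfrak{A}^{L}$ gives $BT_{n}A\in\textfrak{A}^{L}(X_{0},F_{0})$, and the standard submultiplicative estimate yields
$$Lip(BTA-BT_{n}A)=Lip\bigl(B(T-T_{n})A\bigr)\le\|B\|\,Lip(T-T_{n})\,Lip(A)\longrightarrow 0,$$
so $BTA\in\textfrak{A}^{L}_{clos}(X_{0},F_{0})$. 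There is really no hard step here: the whole argument is a routine completeness/continuity exercise, and the only thing one must be careful about is consistently using that $\|B\|<\infty$ and $Lip(A)<\infty$ so that the product bound above actually forces convergence.
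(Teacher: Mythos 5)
Your proof is correct and follows essentially the same route as the paper: verify the three axioms by choosing approximating sequences in $\textfrak{A}^{L}$, then use the triangle inequality for $Lip(\cdot)$ and the submultiplicative estimate $Lip(B(T-T_{n})A)\leq\|B\|\,Lip(T-T_{n})\,Lip(A)$ to transfer the approximation. You spell out the elementary estimates that the paper leaves implicit (it merely cites Proposition \ref{Auto4} and the ``nonlinear composition ideal property''), but the substance is identical.
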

 
\begin{proof}
The condition $\bf (\widetilde{NOI_0})$ satisfied. Since there is a sequence $\left(T_{n}\right)_{n\in\mathbb{N}}=g\boxtimes e$ in $\textfrak{A}^{L}(X, F)$ with $\lim\limits_{n} Lip\left(g\boxtimes e - T_{n}\right)=0$. To prove the condition $\bf (\widetilde{NOI_1})$, let $T$ and $S$ in $\textfrak{A}^{L}_{clos}(X,F)$. Then there are sequences $\left(T_{n}\right)_{n\in\mathbb{N}}$ and $\left(S_{n}\right)_{n\in\mathbb{N}}$ in $\textfrak{A}^{L}(X, F)$ with $\lim\limits_{n} Lip\left(T - T_{n}\right)=\lim\limits_{n} Lip\left(S - S_{n}\right)=0$. From Proposition \ref{Auto4} there is a sequence $\left((T_{n} + S_{n})\right)_{n\in\mathbb{N}}$ in $\textfrak{A}^{L}(X, F)$ such that $\lim\limits_{n} Lip\left(T + S - (T_{n} + S_{n})\right)=0$.

Let $A\in Lip(X_{0},X)$, $T\in\textfrak{A}^{L}_{clos}(X, F)$, and $B\in\mathfrak{L}(F,F_{0})$. Then there is a sequence $\left(T_{n}\right)_{n\in\mathbb{N}}$ in $\textfrak{A}^{L}(X, F)$ with $\lim\limits_{n} Lip\left(T - T_{n}\right)=0$. To show that $BTA\in\textfrak{A}^{L}_{clos}(X_{0},F_{0})$, by using the nonlinear composition ideal property there is a sequence $\left((BT_{n}A)\right)_{n\in\mathbb{N}}$ in $\textfrak{A}^{L}(X_{0},F_{0})$ such that \\ $\lim\limits_{n} Lip\left(BTA - BT_{n}A \right)=0$, the condition $\bf (\widetilde{NOI_2})$ satisfied. \\
\end{proof}

The following statement is evident.

\begin{prop}
The rule $$clos: \textfrak{A}^{L}\longrightarrow\textfrak{A}^{L}_{clos}$$
is a hull Lipschitz procedure. 
\end{prop}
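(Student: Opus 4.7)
The plan is to unpack the three conditions defining a hull Lipschitz procedure and check each one. Recall that I must verify (a) strong monotony \textbf{(M)}, (b) idempotence \textbf{(I)}, and (c) the embedding $\textfrak{A}^{L}\subseteq\textfrak{A}^{L}_{clos}$.

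First I would dispatch the embedding $\textfrak{A}^{L}\subseteq\textfrak{A}^{L}_{clos}$: given $T\in\textfrak{A}^{L}(X,F)$, take the constant sequence $T_{n}:=T$, which lies in $\textfrak{A}^{L}(X,F)$ and satisfies $\lim_{n}Lip(T-T_{n})=0$. Hence $T\in\textfrak{A}^{L}_{clos}(X,F)$. Next, for strong monotony, assume $\textfrak{A}^{L}\subseteq\textfrak{B}^{L}$ and pick $T\in\textfrak{A}^{L}_{clos}(X,F)$. Then there exist $T_{n}\in\textfrak{A}^{L}(X,F)\subseteq\textfrak{B}^{L}(X,F)$ with $Lip(T-T_{n})\to 0$, so directly $T\in\textfrak{B}^{L}_{clos}(X,F)$.

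The only nontrivial step is idempotence. The inclusion $\textfrak{A}^{L}_{clos}\subseteq(\textfrak{A}^{L}_{clos})_{clos}$ is immediate from the embedding property applied to $\textfrak{A}^{L}_{clos}$. For the reverse inclusion, take $T\in(\textfrak{A}^{L}_{clos})_{clos}(X,F)$ and choose $T_{n}\in\textfrak{A}^{L}_{clos}(X,F)$ with $Lip(T-T_{n})<1/(2n)$. For each $n$, since $T_{n}\in\textfrak{A}^{L}_{clos}(X,F)$, pick $S_{n}\in\textfrak{A}^{L}(X,F)$ with $Lip(T_{n}-S_{n})<1/(2n)$. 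The triangle inequality
\[
Lip(T-S_{n})\le Lip(T-T_{n})+Lip(T_{n}-S_{n})<\tfrac{1}{n}
\]
then shows $T$ is the Lipschitz-norm limit of a sequence from $\textfrak{A}^{L}(X,F)$, hence $T\in\textfrak{A}^{L}_{clos}(X,F)$. This establishes \textbf{(I)}.

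The main (and really the only) obstacle is this diagonal/triangle step for idempotence, and it is entirely routine once the approximating sequences are arranged with errors bounded by $1/(2n)$. Combining the three verifications with the previous proposition (which already shows $\textfrak{A}^{L}_{clos}$ is itself a nonlinear ideal, so $clos$ is a bona fide Lipschitz procedure), we conclude that $clos$ is a hull Lipschitz procedure, which completes the proof.
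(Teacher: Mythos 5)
Your proof is correct, and the paper itself gives no argument here—it simply labels the proposition as ``evident'' immediately after establishing that $\textfrak{A}^{L}_{clos}$ is a nonlinear ideal. Your three verifications (embedding via the constant sequence, strong monotony via term-by-term inclusion, and idempotence via the $1/(2n)$ diagonal/triangle estimate) are exactly the routine argument that the paper is implicitly invoking, so you have filled in the gap in the expected way.
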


\begin{rem}
The nonlinear ideal $\textfrak{A}^{L}$ is called closed if $\textfrak{A}^{L}=\textfrak{A}^{L}_{clos}$.
\end{rem} 

\begin{lem}
$\textfrak{G}^{L}$ is the smallest closed nonlinear ideals. 
\end{lem}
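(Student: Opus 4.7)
The plan is to recognize that by the very definition of Lipschitz approximable operators in \S\ref{Auto 15}, the class $\textfrak{G}^{L}$ coincides with the closure hull $\textfrak{F}^{L}_{clos}$ of the smallest nonlinear ideal $\textfrak{F}^{L}$: a Lipschitz operator $T$ is approximable exactly when there is a sequence $T_{n}\in\textfrak{F}^{L}(X,F)$ with $Lip(T-T_{n})\to 0$, which is precisely the condition defining $\textfrak{F}^{L}_{clos}(X,F)$. Once this identification is noted, the statement splits into two halves: $\textfrak{G}^{L}$ is closed, and every closed nonlinear ideal contains $\textfrak{G}^{L}$.

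For the first half, I would appeal directly to the idempotence property $\bf (I)$ contained in the preceding proposition that $clos$ is a hull Lipschitz procedure. This gives
$$\bigl(\textfrak{G}^{L}\bigr)_{clos}=\bigl(\textfrak{F}^{L}_{clos}\bigr)_{clos}=\textfrak{F}^{L}_{clos}=\textfrak{G}^{L},$$
so $\textfrak{G}^{L}$ is closed in the sense stipulated in the remark preceding this lemma. Conceptually this is the usual diagonal argument (a Lipschitz-norm limit of Lipschitz-norm limits of finite rank operators is itself a Lipschitz-norm limit of finite rank operators), but it has already been packaged as idempotence of $clos$, so there is no calculation to redo.

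For the second half, let $\textfrak{A}^{L}$ be an arbitrary closed nonlinear ideal, so $\textfrak{A}^{L}=\textfrak{A}^{L}_{clos}$. Since $\textfrak{F}^{L}$ is the smallest nonlinear ideal (the earlier lemma on $\textfrak{F}^{L}$), we have $\textfrak{F}^{L}\subseteq\textfrak{A}^{L}$. Strong monotony $\bf (M)$ of the closure Lipschitz procedure then yields
$$\textfrak{G}^{L}=\textfrak{F}^{L}_{clos}\subseteq\textfrak{A}^{L}_{clos}=\textfrak{A}^{L},$$
which is the required minimality.

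I do not anticipate a serious obstacle: both halves reduce to bookkeeping using properties $\bf (M)$ and $\bf (I)$ already established for the closure procedure, together with the fact that $\textfrak{F}^{L}$ is the smallest nonlinear ideal. The only point worth making explicit, to avoid a circular appearance, is the verbatim identification $\textfrak{G}^{L}=\textfrak{F}^{L}_{clos}$ from the definitions, which justifies substituting the closure machinery for the original approximation definition.
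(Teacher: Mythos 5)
Your proof is correct and follows essentially the same route as the paper, which likewise rests on the identification $\textfrak{G}^{L}=\textfrak{F}^{L}_{clos}$, with closedness coming from idempotence of the $clos$ hull procedure and minimality from $\textfrak{F}^{L}$ being the smallest nonlinear ideal together with monotony of $clos$ (the paper packages the minimality step as the immediately following lemma). Your write-up is merely more explicit than the paper's one-line argument.
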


\begin{proof}
By the definition of Lipschitz approximable operator in (\ref{Auto 15}) we have $\textfrak{G}^{L}:=\textfrak{F}^{L}_{clos}$.

\end{proof} 

\begin{lem}\label{Auto18}
Let $\textfrak{A}^{L}$ be a closed nonlinear ideal. Then $\textfrak{G}^{L}\subseteq\textfrak{A}^{L}$. 									 
\end{lem}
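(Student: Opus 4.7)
The plan is to exploit the identification $\textfrak{G}^{L}=\textfrak{F}^{L}_{clos}$ (established in the previous lemma) together with the monotonicity of the closure Lipschitz procedure and the fact that $\textfrak{F}^{L}$ is the smallest nonlinear ideal.

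First I would recall that, as shown in the subsection on Lipschitz finite operators, every nonlinear ideal contains $\textfrak{F}^{L}$ component-wise; in particular $\textfrak{F}^{L}\subseteq\textfrak{A}^{L}$. Next I would apply the closure procedure $clos$ to both sides. Since $clos$ was proved to be a hull Lipschitz procedure, it is (strongly) monotone, which yields $\textfrak{F}^{L}_{clos}\subseteq\textfrak{A}^{L}_{clos}$.

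Finally, I would invoke the two identifications: the hypothesis $\textfrak{A}^{L}=\textfrak{A}^{L}_{clos}$ (because $\textfrak{A}^{L}$ is closed) and the preceding lemma $\textfrak{G}^{L}=\textfrak{F}^{L}_{clos}$. Combining these with the inclusion from the previous step gives
\[
\textfrak{G}^{L}=\textfrak{F}^{L}_{clos}\subseteq\textfrak{A}^{L}_{clos}=\textfrak{A}^{L},
\]
which is the desired conclusion.

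If one prefers to argue component-wise instead of appealing to the monotony of $clos$, the only work is to check that an operator $T\in\textfrak{G}^{L}(X,F)$ belongs to $\textfrak{A}^{L}(X,F)$: pick Lipschitz finite rank approximants $T_{n}\in\textfrak{F}^{L}(X,F)\subseteq\textfrak{A}^{L}(X,F)$ with $Lip(T-T_{n})\to 0$, and use that $\textfrak{A}^{L}(X,F)$ is closed in $Lip(X,F)$ with respect to the Lipschitz norm to conclude $T\in\textfrak{A}^{L}(X,F)$. No real obstacle is expected, since the lemma is essentially a direct consequence of two earlier facts; the only subtlety is making sure that the smallest-ideal property of $\textfrak{F}^{L}$ and the definition of a closed nonlinear ideal are applied in the correct order.
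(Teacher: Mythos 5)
Your argument is correct and follows essentially the same route as the paper: the paper's terse ``hence $T\in\textfrak{A}^{L}_{clos}$'' is precisely the chain $\textfrak{G}^{L}=\textfrak{F}^{L}_{clos}\subseteq\textfrak{A}^{L}_{clos}$ that you spell out via the smallest-ideal property and the monotony of $clos$, followed by the hypothesis $\textfrak{A}^{L}_{clos}=\textfrak{A}^{L}$. Your alternative component-wise version is also fine and is just the same reasoning unwound at the level of individual Lipschitz operators.
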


\begin{proof}
Suppose $T\in\textfrak{G}^{L}$, hence $T\in\textfrak{A}^{L}_{clos}$. Since $\textfrak{A}^{L}$ is closed we have $T\in\textfrak{A}^{L}$.  \\
\end{proof} 

\subsection{Radical}
Let $\textswab{A}^{L}$ be a nonlinear operator. A Lipschitz operator $T\in Lip(E,F)$ belongs to the radical $\textswab{A}^{L}_{rad}$ if for every $L\in\mathfrak{L}(F,E)$ there exist $U\in\mathfrak{L}(E)$ and $S\in\textswab{A}^{L}(E)$ such that 
\begin{equation}\label{Auto10}
U\left(I_{E} - L T\right)= I_{E} - S.
\end{equation}
\begin{prop}
$\textswab{A}^{L}_{rad}$ is a nonlinear ideal. 
\end{prop}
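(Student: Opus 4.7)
The plan is to verify the three nonlinear ideal axioms $(NOI_0)$, $(NOI_1)$, $(NOI_2)$ for $\textswab{A}^{L}_{rad}$ by feeding the defining identity \eqref{Auto10} through the corresponding axioms that $\textswab{A}^{L}$ itself already satisfies. A useful preliminary observation is $\textswab{A}^{L}\subseteq\textswab{A}^{L}_{rad}$ componentwise: if $T\in\textswab{A}^{L}(E,F)$ and $L\in\mathfrak{L}(F,E)$, then $LT\in\textswab{A}^{L}(E)$ by $(NOI_2)$ for $\textswab{A}^{L}$, so the choice $U:=I_E$, $S:=LT$ witnesses the radical condition.

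For $(NOI_0)$, I would fix $h\in E^{\#}$, $e\in F$, and an arbitrary $L\in\mathfrak{L}(F,E)$; the pointwise computation $L\circ(h\boxtimes e)(x)=L(h(x)\cdot e)=h(x)\cdot Le$ identifies $L(h\boxtimes e)$ with $h\boxtimes(Le)\in E^{\#}\boxplus E\subseteq\textswab{A}^{L}(E)$ by $(NOI_0)$ for $\textswab{A}^{L}$, so the choices $U:=I_E$ and $S:=h\boxtimes(Le)$ discharge this axiom. For $(NOI_1)$, given $T_1,T_2\in\textswab{A}^{L}_{rad}(E,F)$ and $L\in\mathfrak{L}(F,E)$, I would first apply the radical condition to $T_1$ with input $L$ to obtain $U_1\in\mathfrak{L}(E)$, $S_1\in\textswab{A}^{L}(E)$ with $U_1(I_E-LT_1)=I_E-S_1$, and then apply it to $T_2$ with the new linear input $U_1L\in\mathfrak{L}(F,E)$ to obtain $U_2,S_2$ with $U_2(I_E-U_1LT_2)=I_E-S_2$. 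Eliminating $U_1LT_1$ and $U_2U_1LT_2$ in turn yields
\begin{equation*}
U_2U_1\bigl(I_E-L(T_1+T_2)\bigr)=I_E-(U_2S_1+S_2),
\end{equation*}
so $U:=U_2U_1\in\mathfrak{L}(E)$ and $S:=U_2S_1+S_2\in\textswab{A}^{L}(E)$ (the latter by $(NOI_2)$ applied to the linear left-factor $U_2$, then $(NOI_1)$) complete this step.

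The step I expect to be the main obstacle is $(NOI_2)$. Given $A\in Lip(E_0,E)$, $T\in\textswab{A}^{L}_{rad}(E,F)$, $B\in\mathfrak{L}(F,F_0)$, and $L_0\in\mathfrak{L}(F_0,E_0)$, the classical Pietsch route would choose $L:=AL_0B$, extract $U(I_E-AL_0BT)=I_E-S$ from the radical of $T$, and exploit the commutation $(I_E-AL_0BT)A=A(I_{E_0}-L_0BTA)$ together with post-composition by $L_0B$ to fabricate the required $U_0,S_0$ on $E_0$. Both moves here rely crucially on linearity of $A$: the composite $AL_0B$ is merely Lipschitz and hence not an admissible input for the radical condition (whose $L$ must lie in $\mathfrak{L}(F,E)$), and the commutation identity genuinely fails when $A$ is nonlinear. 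My plan to close the gap is to (i) verify that the radical condition extends automatically from linear $L$ to base-point-preserving Lipschitz $L\in Lip(F,E)$, using that $(NOI_2)$ for $\textswab{A}^{L}$ already permits Lipschitz left-factors, and (ii) replace the failed commutation by a pointwise version whose Lipschitz defect $A(x_0-L_0BTA(x_0))-\bigl(A(x_0)-A(L_0BTA(x_0))\bigr)$ is absorbed into $\textswab{A}^{L}(E_0)$ via another application of the same axiom, after which post-composition by $L_0B$ furnishes $U_0\in\mathfrak{L}(E_0)$ and $S_0\in\textswab{A}^{L}(E_0)$. Step (ii) is the crux, and I expect it to require either an additional factorisation hypothesis on $A$ or a finer structural use of $\textswab{A}^{L}$ beyond its three defining axioms.
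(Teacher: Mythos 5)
Your verifications of $(NOI_0)$ and $(NOI_1)$ are correct and coincide exactly with the paper's: for $(NOI_0)$, take $U:=I_E$ and $S:=L\circ(h\boxtimes e)=h\boxtimes(Le)$; for $(NOI_1)$, extract $U_1,S_1$ from the radical condition for $T_1$ with input $L$, then $U_2,S_2$ from the radical condition for $T_2$ with input $U_1L$, and compose. You also correctly diagnose the obstruction in $(NOI_2)$, and this is in fact a gap in the paper's own proof. The paper applies the radical condition for $T$ with the substituted multiplier $ALB$, but $A$ is merely Lipschitz, so $ALB\notin\mathfrak{L}(F,E)$ and this step is not justified by the definition. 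Moreover the paper's chosen $U_0:=I_{E_0}+LBTUA$ involves the Lipschitz maps $T$ and $A$ and hence is not an element of $\mathfrak{L}(E_0)$, as the definition of the radical requires; and the displayed chain beginning $U_0(I_{E_0}-LBTA)=I_{E_0}-LBTA+LBTU(I_E-ALBT)A$ tacitly uses additivity of $A$ (and, in the last equality, additivity of $T$), which fails for Lipschitz maps.

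Your proposed repairs, however, do not close the gap, and item (i) is wrong as stated: $(NOI_2)$ for $\textswab{A}^L$ requires the left factor $B\in\mathfrak{L}(F,F_0)$ to be linear; only the right factor $A$ may be Lipschitz. Hence post-composing an element of $\textswab{A}^L$ by a nonlinear $L\in Lip(F,E)$ need not remain in $\textswab{A}^L$, and the radical condition does not automatically extend to Lipschitz $L$. Item (ii) would require a stability property for $\textswab{A}^L$ beyond its three axioms, as you yourself acknowledge. The diagnosis is accurate, but neither your sketch nor the paper furnishes a valid argument for $(NOI_2)$; with $U$ and $L$ restricted to $\mathfrak{L}$, the statement appears to need either an additional hypothesis on $A$ or a modified definition of the radical.
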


\begin{proof}
The condition $\bf (NOI_0)$ satisfied. Since for arbitrary operator $L$ in $\mathfrak{L}(F,E)$, put $U:= I_{E}$ and $S:= L\circ\left(h\boxtimes e\right)$. Then $S\in\textswab{A}^{L}(E)$ such that (\ref{Auto10}) fulfilled. To prove the condition $\bf (NOI_1)$, let $T_{1}$ and $T_{2}$ in $\textswab{A}^{L}_{rad}(E,F)$. Then, given $L\in\mathfrak{L}(F,E)$, there are $U_{1}\in\mathfrak{L}(E)$ and $S_{1}\in\textswab{A}^{L}(E)$ with 
$U_{1}\left(I_{E} - L T_{1}\right) = I_{E} - S_{1}$. We now choose $U_{2}\in\mathfrak{L}(E)$ and $S_{2}\in\textswab{A}^{L}(E)$ such that $U_{2}\left(I_{E} - U_{1} L T_{2}\right) = I_{E} - S_{2}$. Then $$U_{2} U_{1}\left[I_{E} - L (T_{1} + T_{1})\right]=U_{2}\left[I_{E} - S_{1}- U_{1} L T_{2}\right]=I_{E} - S_{2} - U_{2} S_{1}.$$ 
Since $S_{2} + U_{2} S_{1}\in\textswab{A}^{L}(E)$, we have $T_{1} + T_{2}\in\textswab{A}^{L}_{rad}(E,F)$. 

Let $A\in Lip(E_{0},E)$, $T\in\textswab{A}^{L}_{rad}(E,F)$, and $B\in\mathfrak{L}(F,F_{0})$. Given $L\in\mathfrak{L}(F_{0},E_{0})$, there exist $U\in\mathfrak{L}(E)$ and $S\in\textswab{A}^{L}(E)$ with $U\left(I_{E} - A L B T\right)= I_{E} - S$. Define  the operators $U_{0}:=I_{E_{0}}+LBTUA$ and $S_{0}:=LBTSA$. Clearly $S_{0}\in\textswab{A}^{L}(E_{0})$ and 
\begin{align}
U_{0}\left(I_{E_{0}} - LBTA\right)&= I_{E_{0}} - LBTA + LBTU\left(I_{E} - ALBT\right)A   \nonumber \\
&=I_{E_{0}} - LBTA + LBT\left(I_{E} - S\right)A                                         \nonumber \\
&=I_{E_{0}} - LBTSA = I_{E} - S_{0}.                                                    \nonumber 
\end{align}

Therefore  $BTA\in\textswab{A}^{L}_{rad}(E_{0},F_{0})$. \\
\end{proof}

\begin{prop}
The rule $$rad: \textswab{A}^{L}\longrightarrow\textswab{A}^{L}_{rad}$$
is a hull Lipschitz procedure. 
\end{prop}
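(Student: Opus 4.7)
The plan is to verify, in order, the three properties a Lipschitz procedure must satisfy to be called a hull: extensivity $\textswab{A}^{L}\subseteq\textswab{A}^{L}_{rad}$, strong monotony $\bf (M)$, and idempotence $\bf (I)$. Of these, the first two are essentially bookkeeping, while idempotence is the substantive step.

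First I would establish extensivity. Given $T\in\textswab{A}^{L}(E,F)$ and an arbitrary $L\in\mathfrak{L}(F,E)$, I set $U:=I_{E}$ and $S:=LT$. Applying $\bf (NOI_{2})$ with $A:=I_{E}\in Lip(E,E)$, $B:=L\in\mathfrak{L}(F,E)$, and the given $T$, we obtain $S=LT\in\textswab{A}^{L}(E)$. The tautology $U(I_{E}-LT)=I_{E}-S$ then witnesses $T\in\textswab{A}^{L}_{rad}(E,F)$. Strong monotony is equally immediate: if $\textswab{A}^{L}\subseteq\textswab{B}^{L}$, any witness pair $(U,S)$ arising in the definition of $\textswab{A}^{L}_{rad}$ automatically qualifies for $\textswab{B}^{L}_{rad}$, since then $S\in\textswab{A}^{L}(E)\subseteq\textswab{B}^{L}(E)$.

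The main step is idempotence, $\bigl(\textswab{A}^{L}_{rad}\bigr)_{rad}=\textswab{A}^{L}_{rad}$. The inclusion $\supseteq$ follows by applying the just-established extensivity to the nonlinear ideal $\textswab{A}^{L}_{rad}$ itself. For the reverse inclusion, I would take $T\in\bigl(\textswab{A}^{L}_{rad}\bigr)_{rad}(E,F)$ and fix an arbitrary $L\in\mathfrak{L}(F,E)$. The outer radical definition yields $V\in\mathfrak{L}(E)$ and $R\in\textswab{A}^{L}_{rad}(E)$ with
$$V(I_{E}-LT)=I_{E}-R.$$
The key move is now to apply the inner radical condition to $R\in\textswab{A}^{L}_{rad}(E)$ with the specific choice $L':=I_{E}\in\mathfrak{L}(E,E)$; this produces $U'\in\mathfrak{L}(E)$ and $S'\in\textswab{A}^{L}(E)$ such that $U'(I_{E}-R)=I_{E}-S'$. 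Composing gives
$$(U'V)(I_{E}-LT)=U'(I_{E}-R)=I_{E}-S',$$
so setting $U:=U'V\in\mathfrak{L}(E)$ and $S:=S'\in\textswab{A}^{L}(E)$ produces the required witness for $T\in\textswab{A}^{L}_{rad}(E,F)$.

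The main obstacle to anticipate is precisely this idempotence direction, and the crucial trick is the choice $L'=I_{E}$ when invoking the definition at the inner level; it is what collapses the two layers of radicality into a single pair over $\textswab{A}^{L}$. Everything else reduces either to a trivial identity ($U=I_{E}$ in extensivity) or to a set-theoretic inclusion (monotony).
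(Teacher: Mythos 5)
Your proof is correct and follows essentially the same route as the paper: extensivity via $U=I_E$, $S=LT$, monotony by inspection, and idempotence by composing the two witness identities, where the inner radical is invoked at $L'=I_E$. The only difference is presentational — you make the choice $L'=I_E$ explicit, whereas the paper leaves it implicit when it passes from $S_1\in\textswab{A}^{L}_{rad}(E)$ to $U_2(I_E-S_1)=I_E-S_2$.
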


\begin{proof}
The property $\bf (M)$ is obvious. If $T\in\textswab{A}^{L}(E, F)$ and $L\in\mathfrak{L}(F,E)$, put $U:= I_{E}$ and $S:= L\circ T$. Then 
$S\in\textswab{A}^{L}(E)$ such that (\ref{Auto10}) hold. Consequently $T\in\textswab{A}^{L}_{rad}(E, F)$. This proves that $\textswab{A}^{L}\subseteq\textswab{A}^{L}_{rad}$. To prove property $\bf (I)$, let $T\in Lip(E, F)$ belong to $\left(\textswab{A}^{L}_{rad}\right)_{rad}$. Then, given $L\in\mathfrak{L}(F,E)$, there are $U_{1}\in\mathfrak{L}(E,E)$ and $S_{1}\in\textswab{A}^{L}_{rad}(E)$ such that $U_{1}\left(I_{E} - L T\right)= I_{E} - S_{1}$. We can now find $U_{2}\in\mathfrak{L}(E,E)$ and $S_{2}\in\textswab{A}^{L}(E)$ with $U_{2}\left(I_{E} - S_{1}\right)= I_{E} - S_{2}$. Consequently $$U_{2} U_{1}\left(I_{E} - L T\right)= U_{2}\left(I_{E} - S_{1}\right)= I_{E} - S_{2}.$$
This implies that $T\in\textswab{A}^{L}_{rad}$. Therefore $\left(\textswab{A}^{L}_{rad}\right)_{rad}\subseteq\textswab{A}^{L}_{rad}$. The converse inclusion is trivial. \\
\end{proof}

Finally, we show that the symmetry in the definition of $\textswab{A}^{L}$ can be removed.

\begin{lem}\label{Auto24}
Let $T\in\textswab{A}^{L}_{rad}(E, F)$. Then for every $L\in\mathfrak{L}(F,E)$ there exist $U\in\mathfrak{L}(E)$ and $S_{1}, S_{2}\in\textswab{A}^{L}(E)$ such that 
$$ U\left(I_{E} - L T\right)= I_{E} - S_{1} \ \ \ and \ \ \  \left(I_{E} - L T\right) U= I_{E} - S_{2}.$$
\end{lem}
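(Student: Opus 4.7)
The plan is to upgrade the one-sided relation (\ref{Auto10}) provided by the definition into a two-sided one, by exploiting that $\textswab{A}^{L}_{rad}$ is itself a nonlinear ideal containing $\textswab{A}^{L}$ (the two preceding propositions). Applying the defining relation to the given $L$ produces $U\in\mathfrak{L}(E)$ and $S_{1}\in\textswab{A}^{L}(E)$ with $U(I_{E}-LT)=I_{E}-S_{1}$, which already supplies the first half of the conclusion; the task is to show the same $U$ also satisfies a right-sided equation.

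The crucial observation is that $I_{E}-U$ itself belongs to $\textswab{A}^{L}_{rad}(E,E)$. From the first equation, $I_{E}-U=S_{1}-ULT$. Here $S_{1}\in\textswab{A}^{L}(E)\subseteq\textswab{A}^{L}_{rad}(E,E)$ by the hull property of $rad$, and since $UL\in\mathfrak{L}(F,E)$ while $T\in\textswab{A}^{L}_{rad}(E,F)$, the nonlinear-ideal axiom $\bf (NOI_2)$ applied to $\textswab{A}^{L}_{rad}$ places $ULT\in\textswab{A}^{L}_{rad}(E,E)$. Closure under addition then gives $I_{E}-U\in\textswab{A}^{L}_{rad}(E,E)$.

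Now invoke the defining property of $\textswab{A}^{L}_{rad}$ a second time, for the element $I_{E}-U$ with the choice $L''=I_{E}\in\mathfrak{L}(E)$: there exist $V\in\mathfrak{L}(E)$ and $S_{3}\in\textswab{A}^{L}(E)$ with $VU=V\bigl(I_{E}-(I_{E}-U)\bigr)=I_{E}-S_{3}$. Note that $S_{3}=I_{E}-VU$ is automatically linear, so expressions such as $S_{3}(I_{E}-LT)$ distribute over sums. Composing the first equation on the left by $V$ and on the right by $U$ yields
$$(I_{E}-S_{3})(I_{E}-LT)U=(I_{E}-S_{3})-VS_{1}U,$$
from which a short rearrangement using the linearity of $S_{3}$ gives $(I_{E}-LT)U=I_{E}-S_{2}$ with $S_{2}=S_{3}+VS_{1}U-S_{3}U+S_{3}LTU$. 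Each of the summands $S_{3}U$, $S_{3}LTU$ and $VS_{1}U$ lies in $\textswab{A}^{L}(E)$ by $\bf (NOI_2)$ (the middle factor is in $\textswab{A}^{L}$, the outer factors are bounded linear, and the right-hand Lipschitz factors $U$ and $LTU$ are admissible), so $S_{2}\in\textswab{A}^{L}(E)$. The main obstacle is the middle step: recognizing that the operator $I_{E}-U$, though not a priori in $\textswab{A}^{L}(E)$, does belong to the larger class $\textswab{A}^{L}_{rad}(E,E)$, which permits a second invocation of the defining property to produce a left quasi-inverse of $U$ itself; the remainder is algebraic manipulation resting on the ideal-absorption of $\textswab{A}^{L}$.
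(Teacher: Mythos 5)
Your argument is correct and is essentially the paper's own proof: you make the same pivotal observation that $R:=I_{E}-U=S_{1}-ULT$ lies in $\textswab{A}^{L}_{rad}(E)$, apply the defining relation to $R$ with $L=I_{E}$ to obtain a left quasi-inverse $V$ of $U$ (the paper calls it $U_{0}$, with $S_{3}=S_{0}$), and then carry out the same algebraic rearrangement, arriving at the identical expression $S_{2}=S_{0}(I_{E}-U+LTU)+U_{0}S_{1}U$. The only cosmetic difference is that the paper inserts the decomposition $I_{E}=U_{0}U+S_{0}$ in front of $(I_{E}-LT)U$, whereas you multiply the first identity on both sides by $V$ and $U$; these are the same computation.
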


\begin{proof}
We can find $U\in\mathfrak{L}(E)$ and $S_{1}\in\textswab{A}^{L}(E)$ with $U\left(I_{E} - L T\right)= I_{E} - S_{1}$.\\ Since 
$$R:=I_{E} - U=S_{1}-ULT\in\textswab{A}^{L}_{rad}(E),$$ there are $U_{0}\in\mathfrak{L}(E)$ and $S_{0}\in\textswab{A}^{L}(E)$ such that $U_{0}\left(I_{E} - R\right)= I_{E} - S_{0}$. We obtain from $U_{0} U + S_{0}= I_{E}$ that 
\begin{align}
\left(I_{E} - LT\right) U&=U_{0} U\left( I_{E} - LT\right) U + S_{0}\left(I_{E} - L T\right)U   \nonumber \\
&=U_{0} \left( I_{E} - S_{1}\right) U + S_{0}\left(I_{E} - L T\right)U= I_{E}-S_{2},             \nonumber 
\end{align}
where $$S_{2}:=S_{0}\left(I_{E} -U+ LTU\right)+ U_{0}S_{1}U\in\textswab{A}^{L}(E).$$
\end{proof}

\begin{lem}
Let $T\in\textswab{A}^{L}_{rad}(E, F)$. Then for every $L\in\mathfrak{L}(F,E)$ there exist $V\in\mathfrak{L}(F)$ and $P_{1}, P_{2}\in\textswab{A}^{L}(F)$ such that 
$$ V\left(I_{F} - T L\right)= I_{F} - P_{1} \ \ \ and \ \ \  \left(I_{F} - T L\right) V= I_{F} - P_{2}.$$
\end{lem}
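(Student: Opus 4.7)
The plan is to avoid redoing the whole construction from scratch and instead reduce this assertion to the two-sided version already proved in Lemma~\ref{Auto24}, by interchanging the roles of $E$ and $F$. The key preliminary observation is that the radical $\textswab{A}^{L}_{rad}$ was shown, in the proposition preceding Lemma~\ref{Auto24}, to be a nonlinear ideal. Hence axiom $\bf(NOI_{2})$ applied with $A:=L\in\mathfrak{L}(F,E)\subset Lip(F,E)$ and $B:=I_{F}\in\mathfrak{L}(F)$ yields
$$TL\;=\;I_{F}\circ T\circ L\;\in\;\textswab{A}^{L}_{rad}(F,F).$$

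With this membership established, I would apply Lemma~\ref{Auto24} to the Lipschitz operator $TL\in\textswab{A}^{L}_{rad}(F,F)$ paired with the linear operator $I_{F}\in\mathfrak{L}(F,F)$. The lemma then furnishes some $V\in\mathfrak{L}(F)$ and $P_{1},P_{2}\in\textswab{A}^{L}(F)$ with $V(I_{F}-I_{F}\cdot TL)=I_{F}-P_{1}$ and $(I_{F}-I_{F}\cdot TL)V=I_{F}-P_{2}$, which is exactly the statement to be proved.

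I do not anticipate any substantive obstacle beyond the two-line verification above. The point worth flagging is why this indirect route is preferable to the obvious mimicry of Lemma~\ref{Auto24}. In that direct approach one would set $V:=I_{F}+TUL$, where $U,S_{1},S_{2}$ come from Lemma~\ref{Auto24} applied to $T$ and $L$, and compute $V(I_{F}-TL)=I_{F}-TS_{1}L$ together with $(I_{F}-TL)V=I_{F}-TS_{2}L$. The difficulty is then to verify that $TS_{i}L\in\textswab{A}^{L}(F)$, and this is not immediate: although $S_{i}L$ lies in $\textswab{A}^{L}(F,E)$ by $\bf(NOI_{2})$, one cannot then compose with $T$ on the left via the same axiom because $T$ is merely Lipschitz, whereas $\bf(NOI_{2})$ demands that the outer left factor be linear. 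Funnelling the problem through $TL\in\textswab{A}^{L}_{rad}(F,F)$ and invoking Lemma~\ref{Auto24} bypasses this obstruction, since the resulting $V$ and $P_{i}$ are produced by the radical mechanism acting entirely on $F$.
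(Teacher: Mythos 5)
Your proof is correct and is in substance the paper's own argument: the paper simply says ``Apply Lemma~\ref{Auto24} to $TL$,'' and you have supplied the two details that make that work, namely that $TL=I_{F}\circ T\circ L\in\textswab{A}^{L}_{rad}(F,F)$ by the ideal axiom and that the previous lemma is then invoked with $L'=I_{F}$. Your remark about why the naive imitation of the earlier proof (setting $V=I_{F}+TUL$) stalls on the non-linearity of $T$ is accurate and explains why the paper reduces to the already-proved lemma rather than redoing the computation; note also that the paper's text reads ``$TL\in\textswab{A}^{L}(F)$'' where $\textswab{A}^{L}_{rad}(F,F)$ is meant, which you have correctly repaired.
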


\begin{proof}
Apply Lemma \ref{Auto24} to $T L\in\textswab{A}^{L}(F)$.\\
\end{proof}
\subsection{Symmetric Nonlinear Ideals}\label{Auto19}
Let $\mathfrak{A}$ be an ideal. A Lipschitz operator $T\in Lip(X,F)$ belongs to the Lipschitz dual ideal $\mathfrak{A}^{L}_{dual}$ if $T^{\#}_{|_{{F}^{*}}}\in\mathfrak{A}(F^{*}, X^{\#})$.

\begin{lem}\label{Auto24}
Let $T$ be in $\textfrak{F}^{L}(X, F)$, with $T=\sum\limits_{j=1}^{m} g_{j}\boxtimes e_{j}$. Then $T^{\#}_{|_{{F}^{*}}}=\sum\limits_{j=1}^{m} \hat{e}_{j}\otimes g_{j}$, where $e\longmapsto \hat{e}$ is the natural embedding of a space $F$ into its second dual $F^{**}$.
\end{lem}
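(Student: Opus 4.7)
The plan is to unwind the definition of the Lipschitz dual operator on a pure tensor functional in $F^{*}$ and compare the result with the action of the claimed rank--one decomposition. Since both sides are linear operators from $F^{*}$ to $X^{\#}$, it suffices to show they agree when evaluated at an arbitrary $a \in F^{*}$, and then to show that the resulting functions in $X^{\#}$ agree at every $x \in X$.

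First I would fix $a \in F^{*}$ and $x \in X$, and compute the pairing that defines $T^{\#}$. By the defining identity $\langle a, Tx\rangle_{(F^{\#}, F)} = \langle T^{\#}a, x\rangle_{(X^{\#}, X)}$ together with the explicit representation $T(x) = \sum_{j=1}^{m} g_{j}(x)\cdot e_{j}$ (which holds in view of Lemma~\ref{Auto11} and Remark~\ref{Auto9}), bilinearity of the pairing gives
\begin{equation*}
\langle a, Tx \rangle = \Bigl\langle a, \sum_{j=1}^{m} g_{j}(x)\cdot e_{j}\Bigr\rangle = \sum_{j=1}^{m} g_{j}(x)\,\langle a, e_{j}\rangle.
\end{equation*}

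Next I would rewrite $\langle a, e_{j}\rangle$ using the canonical embedding $F \hookrightarrow F^{**}$, $e \mapsto \hat{e}$, so that $\langle a, e_{j}\rangle = \hat{e}_{j}(a)$. Substituting this gives
\begin{equation*}
\bigl(T^{\#}a\bigr)(x) = \sum_{j=1}^{m} \hat{e}_{j}(a)\, g_{j}(x) = \Bigl(\sum_{j=1}^{m} \hat{e}_{j}(a)\, g_{j}\Bigr)(x),
\end{equation*}
which is exactly $\bigl(\sum_{j=1}^{m} \hat{e}_{j}\otimes g_{j}\bigr)(a)$ evaluated at $x$, by the standard definition of the elementary tensor $\hat{e}_{j}\otimes g_{j}$ as a rank--one linear operator $a \mapsto \hat{e}_{j}(a)\, g_{j}$.

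Since $x \in X$ and $a \in F^{*}$ were arbitrary, I conclude the operator identity $T^{\#}|_{F^{*}} = \sum_{j=1}^{m} \hat{e}_{j}\otimes g_{j}$ on $F^{*}$. No real obstacle arises here beyond being careful that the dual $T^{\#}$ originally acts on $F^{\#}$ (Lipschitz functionals), so one must note that $F^{*} \subset F^{\#}$ isometrically, which is what makes the restriction $T^{\#}|_{F^{*}}$ well defined and linear into $X^{\#}$; this is the only conceptual point to flag in the write--up.
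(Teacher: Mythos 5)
Your proof is correct and follows essentially the same route as the paper: both start from $Tx=\sum_{j=1}^{m}g_{j}(x)\,e_{j}$, pair against an arbitrary $y^{*}\in F^{*}$ via the defining identity for $T^{\#}$, and read off $T^{\#}_{|_{F^{*}}}y^{*}=\sum_{j=1}^{m}y^{*}(e_{j})\,g_{j}$, then identify $y^{*}(e_{j})$ with $\hat{e}_{j}(y^{*})$. Your extra remark that $F^{*}\subset F^{\#}$ is what makes the restriction well defined is a helpful clarification the paper leaves implicit, but it does not change the argument.
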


\begin{proof}
We have $Tx=\sum\limits_{j=1}^{m} g_{j}(x) e_{j}$ for $x\in X$. So for $y^{*}\in F^{*}$,
$$\left\langle T^{\#}_{|_{{F}^{*}}} y^{*},x\right\rangle_{(X^{\#},X)}=\left\langle y^{*}, Tx\right\rangle_{(F^{*},F)}=\sum\limits_{j=1}^{m}g_{j}(x)y^{*} (e_{j}).$$ Hence $T^{\#}_{|_{{F}^{*}}} y^{*}=\sum\limits_{j=1}^{m} y^{*}(e_{j}) g_{j}$. This proves the statement for $T^{\#}_{|_{{F}^{*}}}$.
\end{proof}

\begin{lem}\label{Auto16}
Let $T, S\in Lip(X,F)$, $A\in Lip(X_{0},X)$, and $B\in\mathfrak{L}(F,F_{0})$. Then

\begin{itemize}
	\item $\left(T+S\right)^{\#}_{|_{{F}^{*}}}=T^{\#}_{|_{{F}^{*}}} + S^{\#}_{|_{{F}^{*}}}$.
	\item $\left(BTA\right)^{\#}_{|_{F^{*}_{0}}}=A^{\#}T^{\#}_{|_{{F}^{*}}}B^{*}$.
\end{itemize}
\end{lem}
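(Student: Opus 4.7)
The plan is to verify both identities by direct unpacking of the defining duality relation $\langle g, Sx \rangle_{(Y^{\#},Y)} = \langle S^{\#} g, x \rangle_{(X^{\#},X)}$, which was recalled in the preliminaries. Since a Lipschitz dual $T^{\#}$ agrees with the usual adjoint when it acts on linear functionals (elements of $F^{*} \subset F^{\#}$), both equations reduce to pointwise evaluations against arbitrary $x \in X$ (or $x_{0} \in X_{0}$).

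For the first identity, I would fix $y^{*} \in F^{*}$ and $x \in X$, and compute
\[
\langle (T+S)^{\#}_{|_{F^{*}}} y^{*}, x \rangle_{(X^{\#},X)} = \langle y^{*}, (T+S)x \rangle_{(F^{*},F)} = \langle y^{*}, Tx \rangle + \langle y^{*}, Sx \rangle,
\]
where I used linearity of $y^{*}$ on $F$. Applying the duality relation backwards to each term gives $\langle T^{\#}_{|_{F^{*}}} y^{*}, x \rangle + \langle S^{\#}_{|_{F^{*}}} y^{*}, x \rangle$. Since $x$ is arbitrary, the two Lipschitz functionals on $X$ coincide, proving $(T+S)^{\#}_{|_{F^{*}}} = T^{\#}_{|_{F^{*}}} + S^{\#}_{|_{F^{*}}}$.

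For the second identity, I would fix $y_{0}^{*} \in F_{0}^{*}$ and $x_{0} \in X_{0}$, and chain the duality relation three times:
\[
\langle (BTA)^{\#}_{|_{F_{0}^{*}}} y_{0}^{*}, x_{0} \rangle = \langle y_{0}^{*}, B T A x_{0} \rangle = \langle B^{*} y_{0}^{*}, T(Ax_{0}) \rangle = \langle T^{\#}_{|_{F^{*}}} B^{*} y_{0}^{*}, A x_{0} \rangle = \langle A^{\#}\, T^{\#}_{|_{F^{*}}} B^{*} y_{0}^{*}, x_{0} \rangle.
\]
The first step peels off $B$ using the Banach-space adjoint; the second step applies the Lipschitz-dual relation to $T$ (noting $B^{*}y_{0}^{*} \in F^{*} \subset F^{\#}$, so the restriction $T^{\#}_{|_{F^{*}}}$ is what acts); the third step applies the Lipschitz-dual relation to $A$. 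Arbitrariness of $x_{0}$ then yields the claimed factorization.

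No real obstacle is expected; the only point requiring a little care is confirming that $B^{*} y_{0}^{*}$ indeed lies in $F^{*}$ (so that the restriction $T^{\#}_{|_{F^{*}}}$ is legitimately applicable), but this is immediate since $B \in \mathfrak{L}(F, F_{0})$ is bounded and linear. Both parts are essentially a rewriting of the defining duality and take the same form as the corresponding linear-operator adjoint identities.
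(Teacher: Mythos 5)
Your proof is correct and follows essentially the same route as the paper's: both identities are established by evaluating $\langle \cdot, x\rangle$ (resp.\ $\langle\cdot,x_0\rangle$) against arbitrary points, unpacking the duality relation $\langle g, Sx\rangle = \langle S^{\#}g, x\rangle$ stepwise, and for the second part chaining through $B^{*}$, $T^{\#}$, and $A^{\#}$ in the same order. The brief sanity check that $B^{*}y_0^{*}\in F^{*}$ is a reasonable addition but does not change the argument.
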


\begin{proof}
For  $y^{*}\in F^{*}$ and $x\in X$, we have
\begin{align}
\left\langle \left(T+S\right)^{\#}_{|_{{F}^{*}}} y^{*},x\right\rangle_{(X^{\#},X)}&=\left\langle y^{*}, (T+S)x\right\rangle_{(F^{*},F)}  \nonumber \\
&=\left\langle y^{*}, Tx+Sx\right\rangle_{(F^{*},F)} \nonumber \\
&=\left\langle y^{*}, Tx\right\rangle_{(F^{*},F)} + \left\langle y^{*}, Sx\right\rangle_{(F^{*},F)}\nonumber \\
&=\left\langle T^{\#}_{|_{{F}^{*}}} y^{*},x\right\rangle_{(X^{\#},X)} + \left\langle S^{\#}_{|_{{F}^{*}}} y^{*},x\right\rangle_{(X^{\#},X)} \nonumber
\end{align}
Hence $\left(T+S\right)^{\#}_{|_{{F}^{*}}}=T^{\#}_{|_{{F}^{*}}} + S^{\#}_{|_{{F}^{*}}}$. 

For  $y^{*}_{0}\in F^{*}$ and $x_{0}\in X_{0}$, we have
\begin{align}
\left\langle y_{0}^{*}, BTA (x_{0})\right\rangle_{(F_{0}^{*},F_{0})}&=\left\langle y_{0}^{*}, B(TA x_{0})\right\rangle_{(F_{0}^{*},F_{0})}  \nonumber \\
&=\left\langle B^{*} y_{0}^{*}, T(A x_{0})\right\rangle_{(F^{*},F)} \nonumber \\
&=\left\langle T^{\#}_{|_{{F}^{*}}} B^{*} y_{0}^{*}, A x_{0}\right\rangle_{(X^{\#}, X)} \nonumber \\
&=\left\langle A^{\#} T^{\#}_{|_{{F}^{*}}} B^{*} y_{0}^{*}, x_{0}\right\rangle_{(X_{0}^{\#}, X_{0})} \nonumber 
\end{align}
But also $\left\langle y_{0}^{*}, BTA (x)\right\rangle_{(F_{0}^{*},F_{0})}=\left\langle \left(BTA\right)^{\#}_{|_{F^{*}_{0}}} y_{0}^{*}, x_{0}\right\rangle_{(X_{0}^{\#}, X_{0})}$. Therefore $\left(BTA\right)^{\#}_{|_{F^{*}_{0}}}=A^{\#}T^{\#}_{|_{{F}^{*}}}B^{*}$. \\
\end{proof}

\begin{prop}
$\mathfrak{A}^{L}_{dual}$ is a nonlinear ideal. 
\end{prop}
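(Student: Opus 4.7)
The plan is to verify the three axioms $\bf (\widetilde{NOI_0})$, $\bf (\widetilde{NOI_1})$, $\bf (\widetilde{NOI_2})$ of Definition \ref{Auto7} directly, using Lemma \ref{Auto24} (the dual of a Lipschitz rank--one operator) and Lemma \ref{Auto16} (the behaviour of $(\cdot)^{\#}_{|_{F^{*}}}$ under sums and two--sided compositions) together with the three defining axioms $\bf (OI_0)$, $\bf (OI_1)$, $\bf (OI_2)$ of the underlying operator ideal $\mathfrak{A}$. The whole point of the two preceding lemmas is to reduce the nonlinear ideal conditions on $T$ to the linear ideal conditions on $T^{\#}_{|_{F^{*}}}$, so the verification should be essentially bookkeeping.

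For $\bf (\widetilde{NOI_0})$, fix $g\in X^{\#}$ and $e\in F$. By Lemma \ref{Auto24} applied to the rank--one element $g\boxtimes e$, one has $(g\boxtimes e)^{\#}_{|_{F^{*}}}=\hat{e}\otimes g$, where $\hat{e}\in F^{**}=(F^{*})^{*}$ and $g\in X^{\#}$. Axiom $\bf (OI_0)$ for $\mathfrak{A}$ then gives $\hat{e}\otimes g\in\mathfrak{A}(F^{*},X^{\#})$, so $g\boxtimes e\in\mathfrak{A}^{L}_{dual}(X,F)$.

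For $\bf (\widetilde{NOI_1})$, take $S,T\in\mathfrak{A}^{L}_{dual}(X,F)$. The first bullet of Lemma \ref{Auto16} yields $(S+T)^{\#}_{|_{F^{*}}}=S^{\#}_{|_{F^{*}}}+T^{\#}_{|_{F^{*}}}$, and axiom $\bf (OI_1)$ for $\mathfrak{A}$ places this sum in $\mathfrak{A}(F^{*},X^{\#})$, so $S+T\in\mathfrak{A}^{L}_{dual}(X,F)$. For $\bf (\widetilde{NOI_2})$, let $A\in Lip(X_{0},X)$, $T\in\mathfrak{A}^{L}_{dual}(X,F)$ and $B\in\mathfrak{L}(F,F_{0})$. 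The second bullet of Lemma \ref{Auto16} gives
\[
(BTA)^{\#}_{|_{F^{*}_{0}}}=A^{\#}\,T^{\#}_{|_{F^{*}}}\,B^{*}.
\]
Here $B^{*}\in\mathfrak{L}(F^{*}_{0},F^{*})$ is the usual Banach adjoint, and $A^{\#}\in\mathfrak{L}(X^{\#},X^{\#}_{0})$ is the Lipschitz dual operator recalled in the preliminaries, both of which are bounded linear. Since $T^{\#}_{|_{F^{*}}}\in\mathfrak{A}(F^{*},X^{\#})$, axiom $\bf (OI_2)$ for $\mathfrak{A}$ yields $A^{\#}T^{\#}_{|_{F^{*}}}B^{*}\in\mathfrak{A}(F^{*}_{0},X^{\#}_{0})$, hence $BTA\in\mathfrak{A}^{L}_{dual}(X_{0},F_{0})$.

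There is no real obstacle: each of the three nonlinear ideal axioms collapses into the corresponding linear ideal axiom via Lemmas \ref{Auto24} and \ref{Auto16}. The only subtle point worth flagging is in $\bf (\widetilde{NOI_2})$, where one must make sure the composition $A^{\#}\,T^{\#}_{|_{F^{*}}}\,B^{*}$ matches the two--sided multiplication pattern required by $\bf (OI_2)$, i.e. that both outer factors are genuinely bounded linear operators between Banach spaces (which they are, thanks to the identity $Lip(A)=\|A^{\#}\|$ from the preliminaries and the standard boundedness of the Banach adjoint $B^{*}$). With that observation, the proof is complete.
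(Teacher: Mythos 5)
Your proof is correct and follows essentially the same route as the paper's: each axiom $\bf (\widetilde{NOI_0})$, $\bf (\widetilde{NOI_1})$, $\bf (\widetilde{NOI_2})$ is reduced to the corresponding axiom of the operator ideal $\mathfrak{A}$ by passing to $T^{\#}_{|_{F^{*}}}$ via Lemma \ref{Auto24} and Lemma \ref{Auto16}. Your version is slightly more explicit about which axiom $\bf (OI_0)$--$\bf (OI_2)$ is invoked at each step, but the argument is identical in substance.
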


\begin{proof}
The condition $\bf (\widetilde{NOI_0})$ satisfied. From Lemma \ref{Auto24} we obtain $\left(g\boxtimes e\right)^{\#}_{|_{{F}^{*}}}=\hat{e}\otimes g\in\mathfrak{A}(F^{*}, X^{\#})$. To prove the condition $\bf (\widetilde{NOI_1})$, let $T$ and $S$ in $\mathfrak{A}^{L}_{dual}(X,F)$. Let $T^{\#}_{|_{{F}^{*}}}$ and $S^{\#}_{|_{{F}^{*}}}$ in $\mathfrak{A}(F^{*}, X^{\#})$, from Lemma \ref {Auto16} we have $\left(T+S\right)^{\#}_{|_{{F}^{*}}}=T^{\#}_{|_{{F}^{*}}} + S^{\#}_{|_{{F}^{*}}}\in\mathfrak{A}(F^{*}, X^{\#})$. 

Let $A\in Lip(X_{0},X)$, $T\in\mathfrak{A}^{L}_{dual}(X,F)$, and $B\in\mathfrak{L}(F,F_{0})$. Also from Lemma \ref {Auto16} we have $\left(BTA\right)^{\#}_{|_{F^{*}_{0}}}=A^{\#}T^{\#}_{|_{{F}^{*}}}B^{*}\in\mathfrak{A}(F_{0}^{*}, X_{0}^{\#})$, hence the condition $\bf (\widetilde{NOI_2})$ satisfied. \\
\end{proof}

\begin{prop}
The rule $$dual: \mathfrak{A}\longrightarrow \mathfrak{A}^{L}_{dual}$$
is a monotone Lipschitz procedure. 
\end{prop}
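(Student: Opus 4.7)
The plan is to verify the two ingredients in the definition of a monotone (semi-)Lipschitz procedure: first, that $dual$ really is a rule assigning a nonlinear ideal to every operator ideal; second, that this assignment respects inclusions of operator ideals, i.e.\ property $\bf (M')$.

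The first point is essentially free: the preceding proposition already establishes that $\mathfrak{A}^{L}_{dual}$ is a nonlinear ideal whenever $\mathfrak{A}$ is an operator ideal. Thus $\mathfrak{A}\mapsto \mathfrak{A}^{L}_{dual}$ is a well-defined rule of the type described in the subsection on Lipschitz procedures (the semi-Lipschitz procedure variant $new:\mathfrak{A}\longrightarrow\mathfrak{A}^{L}_{new}$, which the author also refers to as a Lipschitz procedure in this context).

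For monotonicity, I would argue directly from the defining condition. Suppose $\mathfrak{A}\subseteq\mathfrak{B}$ and let $T\in\mathfrak{A}^{L}_{dual}(X,F)$. By definition this means $T^{\#}_{|_{F^{*}}}\in\mathfrak{A}(F^{*},X^{\#})$. The inclusion $\mathfrak{A}(F^{*},X^{\#})\subseteq\mathfrak{B}(F^{*},X^{\#})$ is a component-wise consequence of $\mathfrak{A}\subseteq\mathfrak{B}$, so $T^{\#}_{|_{F^{*}}}\in\mathfrak{B}(F^{*},X^{\#})$, which is precisely the condition $T\in\mathfrak{B}^{L}_{dual}(X,F)$. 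Since $X$ and $F$ were arbitrary, this yields $\mathfrak{A}^{L}_{dual}\subseteq\mathfrak{B}^{L}_{dual}$, which is property $\bf (M')$.

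There is no real obstacle here; the argument is a one-line unwinding of the definition, and no structural feature of the Lipschitz dual $T^{\#}_{|_{F^{*}}}$ beyond its membership statement is needed. The only caveat to flag is that monotonicity is stated in the $\bf (M')$ form (with respect to inclusion of \emph{operator} ideals on the input side) rather than the stronger $\bf (M)$ form, which is appropriate since the input of $dual$ is an operator ideal $\mathfrak{A}$, not a nonlinear ideal; strong monotony is not claimed and indeed not needed.
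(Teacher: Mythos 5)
Your proof is correct. The paper states this proposition without proof (treating it as evident), and your argument is exactly the natural one the author presumably has in mind: the preceding proposition handles well-definedness, and the $\bf(M')$-monotonicity is the one-line unwinding of the definition of $\mathfrak{A}^{L}_{dual}$ via $T^{\#}_{|_{F^{*}}}\in\mathfrak{A}(F^{*},X^{\#})\subseteq\mathfrak{B}(F^{*},X^{\#})$. Your caveat distinguishing $\bf(M')$ from $\bf(M)$ is also apt, since the input of the procedure is an operator ideal rather than a nonlinear ideal.
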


\begin{rem}
A nonlinear ideal $\textfrak{A}^{L}$ is called \textbf{symmetric} if $\textfrak{A}^{L}\subseteq\mathfrak{A}^{L}_{dual}$. In case $\textfrak{A}^{L}=\mathfrak{A}^{L}_{dual}$ the nonlinear ideal is said to be \textbf{completely symmetric}.
\end{rem} 

\begin{lem}
The nonlinear ideal $\textfrak{F}^{L}$ is completely symmetric.
\end{lem}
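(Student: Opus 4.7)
The plan is to verify the two inclusions $\textfrak{F}^{L} \subseteq \mathfrak{F}^{L}_{dual}$ and $\mathfrak{F}^{L}_{dual} \subseteq \textfrak{F}^{L}$ separately. After unfolding the definitions, both reduce to the well--known symmetry of ``finite rank'' under duality: a Lipschitz map takes values in a finite--dimensional subspace of $F$ if and only if its Lipschitz transpose restricted to $F^{*}$ is a finite--rank linear map into $X^{\#}$.

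The forward inclusion is immediate from Lemma \ref{Auto24}. If $T \in \textfrak{F}^{L}(X, F)$ with representation $T = \sum_{j=1}^{m} g_{j} \boxtimes e_{j}$, then that lemma yields $T^{\#}_{|_{{F}^{*}}} = \sum_{j=1}^{m} \hat{e}_{j} \otimes g_{j}$, which is a finite sum of rank--one linear operators in $\mathfrak{L}(F^{*}, X^{\#})$ and hence belongs to $\mathfrak{F}(F^{*}, X^{\#})$. Thus $T \in \mathfrak{F}^{L}_{dual}(X, F)$.

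For the reverse inclusion, I would take $T \in Lip(X, F)$ with $T^{\#}_{|_{{F}^{*}}} \in \mathfrak{F}(F^{*}, X^{\#})$ and set $M := \mathrm{span}\, T(X) \subseteq F$. The key observation is the annihilator identity $\ker T^{\#}_{|_{{F}^{*}}} = M^{\perp}$, obtained from the pointwise chain of equivalences $T^{\#}(y^{*}) = 0 \iff y^{*} \circ T \equiv 0 \iff y^{*}$ vanishes on $T(X) \iff y^{*} \in M^{\perp}$. Since $T^{\#}_{|_{{F}^{*}}}$ has finite rank, $F^{*}/M^{\perp}$ is finite dimensional, and by Hahn--Banach $F^{*}/M^{\perp} \cong (\overline{M})^{*}$, so $\overline{M}$ (and hence $M$) is finite dimensional. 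Fixing a basis $e_{1}, \dots, e_{m}$ of $M$ and writing $T(x) = \sum_{j=1}^{m} g_{j}(x)\, e_{j}$, one checks each coordinate functional $g_{j}$ lies in $X^{\#}$ (they are Lipschitz and vanish at the base point because $T$ does), exhibiting $T$ as an element of $\textfrak{F}^{L}(X, F)$.

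The forward direction is essentially bookkeeping via Lemma \ref{Auto24}. The main (and really the only) obstacle is the reverse direction, where one must promote ``finite rank of $T^{\#}_{|_{{F}^{*}}}$'' to ``finite--dimensional image of $T$'' without the linearity of $T$. The crucial ingredient is that the annihilator identity remains valid for Lipschitz $T$, since it follows purely pointwise from the definition of $T^{\#}$, combined with the standard fact that a Banach space whose dual is finite dimensional is itself finite dimensional.
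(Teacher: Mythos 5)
Your forward inclusion is identical to the paper's: both invoke Lemma \ref{Auto24} to write $T^{\#}_{|_{F^{*}}} = \sum_{j}\hat{e}_{j}\otimes g_{j}$ and conclude it lies in $\mathfrak{F}(F^{*},X^{\#})$. Your reverse inclusion, however, takes a genuinely different route. The paper starts from an abstract finite--rank representation of $T^{\#}_{|_{F^{*}}}$, assumes without comment that the representing vectors in $F^{**}$ can be taken of the form $\hat{e}_{j}$ with $e_{j}\in F$ (a point that really needs a small linear--algebra argument, e.g.\ choosing $g_{j}$ linearly independent and evaluating at suitable points of $X$ to pull the $\hat{e}_{j}$ back into $K_{F}(F)$), and then unwinds the pairing to recover $T=\sum_{j} g_{j}\boxtimes e_{j}$. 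You instead work directly on the image side: set $M=\mathrm{span}\,T(X)$, prove $\ker T^{\#}_{|_{F^{*}}}=M^{\perp}$ pointwise, deduce from the isomorphism $F^{*}/M^{\perp}\cong(\overline{M})^{*}$ that $\overline{M}$ is finite dimensional, and then read off the coordinate functionals as the $g_{j}$. This avoids the paper's implicit $w^{*}$--continuity step entirely and yields the cleaner characterization that $T\in\textfrak{F}^{L}(X,F)$ if and only if $\mathrm{span}\,T(X)$ is finite dimensional. Both routes are correct; yours is a bit more self--contained, the paper's a bit shorter modulo the unjustified step.

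One small point worth making explicit in your write--up: the equivalence $T^{\#}_{|_{F^{*}}}(y^{*})=0 \iff y^{*}\circ T\equiv 0$ uses that $T(0)=0$, so that $\langle T^{\#}y^{*},x\rangle = y^{*}(Tx)-y^{*}(T0)=y^{*}(Tx)$; with the paper's standing convention on base points this is automatic, but it is the hinge of the annihilator identity and deserves a word.
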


\begin{proof}
Let $T\in\textfrak{F}^{L}(X, F)$, then $T$ can be representation in the form $\sum\limits_{j=1}^{m} g_{j}\boxtimes e_{j}$. From Lemma \ref{Auto24} we have $T^{\#}_{|_{{F}^{*}}}=\sum\limits_{j=1}^{m}\hat{e}_{j}\otimes g_{j}\in F^{**}\otimes X^{\#}\equiv\mathfrak{F}(F^{*}, X^{\#})$. Hence $T\in\mathfrak{F}^{L}_{dual}(X, F)$. 

Let $T\in\mathfrak{F}^{L}_{dual}(X, F)$ then $T^{\#}_{|_{{F}^{*}}}\in\mathfrak{F}(F^{*}, X^{\#})$ hence $T^{\#}_{|_{{F}^{*}}}$ can be representation in the form $\sum\limits_{j=1}^{m} \hat{e}_{j}\otimes g_{j}$. For  $y^{*}\in F^{*}$ and $x\in X$, we have
\begin{align}
\left\langle y^{*}, T x\right\rangle_{(F^{*},F)}&=\left\langle T^{\#}_{|_{{F}^{*}}} y^{*}, x\right\rangle_{(X^{\#}, X)}  \nonumber \\
&=\left\langle \sum\limits_{j=1}^{m} \hat{e}_{j}\otimes g_{j}\  (y^{*}), x\right\rangle_{(X^{\#}, X)}  \nonumber \\
&=\left\langle \sum\limits_{j=1}^{m} \hat{e}_{j}(y^{*})\cdot g_{j}\  , x\right\rangle_{(X^{\#}, X)}  \nonumber \\
&=\left\langle \sum\limits_{j=1}^{m} y^{*}({e}_{j})\cdot g_{j}\  , x\right\rangle_{(X^{\#}, X)}  \nonumber \\
&=\sum\limits_{j=1}^{m} g_{j}(x)\cdot y^{*}(e_{j})  \nonumber \\
&=\left\langle y^{*}, \sum\limits_{j=1}^{m} g_{j}\boxtimes e_{j} \ (x)\right\rangle_{(F^{*},F)}. \nonumber
\end{align}
Hence $T=\sum\limits_{j=1}^{m} g_{j}\boxtimes e_{j}\in\textfrak{F}^{L}(X, F)$. \\
\end{proof}

\subsection{Regular Nonlinear Ideals}
Let $\textfrak{A}^{L}$ be a nonlinear ideal. A Lipschitz operator $T\in Lip(X,F)$ belongs to the regular hull $\textfrak{A}^{L}_{reg}$ if $K_{F}T\in\textfrak{A}^{L}(X, F^{**})$.

\begin{prop}
$\textfrak{A}^{L}_{reg}$ is a nonlinear ideal. 
\end{prop}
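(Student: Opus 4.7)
The plan is to verify the three axioms $\bf (\widetilde{NOI_0})$, $\bf (\widetilde{NOI_1})$, and $\bf (\widetilde{NOI_2})$ of Definition \ref{Auto7} for $\textfrak{A}^{L}_{reg}$, pulling each one back along the canonical embedding $K_{F}:F\longrightarrow F^{**}$ to the corresponding axiom for $\textfrak{A}^{L}$. Throughout, the key identity is the standard compatibility $K_{F_{0}}\circ B = B^{**}\circ K_{F}$ for every $B\in\mathfrak{L}(F,F_{0})$, which reduces every question about $K_{F_{0}}(BTA)$ to a question about $K_{F}T$ composed with linear and Lipschitz maps.

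First I would dispose of $\bf (\widetilde{NOI_0})$. Given $g\in X^{\#}$ and $e\in F$, a direct computation shows $K_{F}(g\boxtimes e)(x)=K_{F}(g(x)\cdot e)=g(x)\cdot K_{F}e=(g\boxtimes K_{F}e)(x)$, so $K_{F}(g\boxtimes e)=g\boxtimes K_{F}e$ lies in $\textfrak{A}^{L}(X,F^{**})$ by $\bf (\widetilde{NOI_0})$ for $\textfrak{A}^{L}$, giving $g\boxtimes e\in\textfrak{A}^{L}_{reg}(X,F)$. Next, for $\bf (\widetilde{NOI_1})$, if $T,S\in\textfrak{A}^{L}_{reg}(X,F)$ then $K_{F}T,K_{F}S\in\textfrak{A}^{L}(X,F^{**})$, and since $K_{F}$ is linear we have $K_{F}(T+S)=K_{F}T+K_{F}S\in\textfrak{A}^{L}(X,F^{**})$ by $\bf (\widetilde{NOI_1})$ for $\textfrak{A}^{L}$.

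For $\bf (\widetilde{NOI_2})$, take $A\in Lip(X_{0},X)$, $T\in\textfrak{A}^{L}_{reg}(X,F)$ and $B\in\mathfrak{L}(F,F_{0})$. Using $K_{F_{0}}B=B^{**}K_{F}$, I would write
\[
K_{F_{0}}\,(BTA)=B^{**}\,(K_{F}T)\,A,
\]
where $B^{**}\in\mathfrak{L}(F^{**},F_{0}^{**})$, $K_{F}T\in\textfrak{A}^{L}(X,F^{**})$ by hypothesis, and $A\in Lip(X_{0},X)$. Applying $\bf (\widetilde{NOI_2})$ for the nonlinear ideal $\textfrak{A}^{L}$ then yields $B^{**}(K_{F}T)A\in\textfrak{A}^{L}(X_{0},F_{0}^{**})$, i.e. $BTA\in\textfrak{A}^{L}_{reg}(X_{0},F_{0})$.

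None of the three steps presents any real obstacle; the only point that is not completely formal is the identity $K_{F_{0}}B=B^{**}K_{F}$, which is a standard fact about biduals but should be stated explicitly so the reader sees why $B^{**}$ (rather than $B$) is the correct linear operator to invoke in axiom $\bf (\widetilde{NOI_2})$ of $\textfrak{A}^{L}$. Once that identity is in hand, everything else is immediate from the corresponding ideal properties already established for $\textfrak{A}^{L}$.
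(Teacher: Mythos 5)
Your proposal is correct and follows essentially the same route as the paper: verify the three axioms by pulling back along $K_{F}$, with the key step for $\bf (\widetilde{NOI_2})$ being the commutation $K_{F_{0}}B=B^{**}K_{F}$, which is exactly the commutative diagram the paper draws. The only cosmetic difference is in $\bf (\widetilde{NOI_0})$, where you compute $K_{F}(g\boxtimes e)=g\boxtimes K_{F}e$ and invoke $\bf (\widetilde{NOI_0})$ for $\textfrak{A}^{L}$, whereas the paper simply notes $g\boxtimes e\in\textfrak{A}^{L}(X,F)$ and applies the composition property $\bf (\widetilde{NOI_2})$ with $B=K_{F}$; both are valid.
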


\begin{proof}
The condition $\bf (\widetilde{NOI_0})$ satisfied. Since $g\boxtimes e\in\textfrak{A}^{L}(X, F)$ and using nonlinear composition ideal property we have $K_{F}(g\boxtimes e)\in\textfrak{A}^{L}(X, F^{**})$. To prove the condition $\bf (\widetilde{NOI_1})$, let $T$ and $S$ in $\textfrak{A}^{L}_{reg}(X,F)$. Then $K_{F}T$ and $K_{F}S$ in $\textfrak{A}^{L}(X, F^{**})$, we have $K_{F}(T+S)=K_{F}T + K_{F}S\in\textfrak{A}^{L}(X, F^{**})$. 

Let $A\in Lip(X_{0},X)$, $T\in\textfrak{A}^{L}_{reg}(X, F)$, and $B\in\mathfrak{L}(F,F_{0})$. Then
$$
\begin{tikzcd}[row sep=5.0em, column sep=5.0em]
X   \arrow{r}{T}                & F   \arrow{r}{K_F} \arrow{d}{B} & F^{**} \arrow{d}{B^{**}} \\
X_0 \arrow{r}{BTA} \arrow{u}{A} & F_0 \arrow{r}{K_{F_0}}          & F_0^{**}                  \\
\end{tikzcd}
\vspace{-50pt}
$$
Consequently $K_{F_{0}}\left(BTA\right)=B^{**}\left(K_{F}T\right)A\in\textfrak{A}^{L}$, hence the condition $\bf (\widetilde{NOI_2})$ satisfied. \\
\end{proof}

\begin{prop}
The rule $$reg: \textfrak{A}^{L}\longrightarrow\textfrak{A}^{L}_{reg}$$
is a hull Lipschitz procedure. 
\end{prop}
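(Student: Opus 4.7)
The plan is to verify the three requirements defining a hull Lipschitz procedure: strong monotony $\bf (M)$, extensiveness $\textfrak{A}^{L}\subseteq\textfrak{A}^{L}_{reg}$, and idempotence $\bf (I)$.

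Strong monotony is a direct unfolding of the definition: if $\textfrak{A}^{L}\subseteq\textfrak{B}^{L}$ and $T\in\textfrak{A}^{L}_{reg}(X,F)$, then $K_{F}T\in\textfrak{A}^{L}(X,F^{**})\subseteq\textfrak{B}^{L}(X,F^{**})$, and therefore $T\in\textfrak{B}^{L}_{reg}(X,F)$. Extensiveness is equally quick: given $T\in\textfrak{A}^{L}(X,F)$, since $K_{F}\in\mathfrak{L}(F,F^{**})$, property $\bf (\widetilde{NOI_2})$ yields $K_{F}T\in\textfrak{A}^{L}(X,F^{**})$, so $T\in\textfrak{A}^{L}_{reg}(X,F)$. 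The inclusion $\textfrak{A}^{L}_{reg}\subseteq\bigl(\textfrak{A}^{L}_{reg}\bigr)_{reg}$ in turn follows by applying the extensiveness step to the nonlinear ideal $\textfrak{A}^{L}_{reg}$ itself.

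The one nontrivial step is the reverse inclusion $\bigl(\textfrak{A}^{L}_{reg}\bigr)_{reg}\subseteq\textfrak{A}^{L}_{reg}$, which I expect to be the main obstacle. Suppose $T\in\bigl(\textfrak{A}^{L}_{reg}\bigr)_{reg}(X,F)$; by definition $K_{F^{**}}(K_{F}T)\in\textfrak{A}^{L}(X,F^{****})$, so I must manufacture from this membership the weaker statement $K_{F}T\in\textfrak{A}^{L}(X,F^{**})$. The classical tool is Dixmier's canonical projection $\pi:=K_{F^{*}}^{*}\in\mathfrak{L}(F^{****},F^{**})$, whose defining identity $\pi\circ K_{F^{**}}=I_{F^{**}}$ I would verify directly by evaluating both sides on an arbitrary $x^{**}\in F^{**}$ against an arbitrary $x^{*}\in F^{*}$ using the definition of the canonical embeddings. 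Once this identity is in hand, property $\bf (\widetilde{NOI_2})$ applied to the composition $\pi\circ\bigl(K_{F^{**}}K_{F}T\bigr)$ yields $K_{F}T\in\textfrak{A}^{L}(X,F^{**})$, hence $T\in\textfrak{A}^{L}_{reg}(X,F)$.

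Combining the three steps, the rule $reg$ is strongly monotone, extensive, and idempotent, so it is a hull Lipschitz procedure. The only deep ingredient is the existence of the Dixmier left inverse for $K_{F^{**}}$; everything else reduces to bookkeeping with the nonlinear ideal axioms.
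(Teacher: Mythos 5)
Your proof is correct and follows the same route as the paper: strong monotony by unwinding definitions, extensiveness via $\bf (\widetilde{NOI_2})$ with $K_F$, and idempotence via the Dixmier projection identity $K_{F^{*}}^{*}\circ K_{F^{**}}=I_{F^{**}}$. The paper writes this identity as $I_{F^{**}}=(K_{F^{*}})^{*}K_{F^{**}}$ and applies it to recover $K_F T$ from $K_{F^{**}}K_F T$, exactly as you do.
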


\begin{proof}
The property $\bf (M)$ is obvious. From the nonlinear composition ideal we obtain $\textfrak{A}^{L}\subseteq\textfrak{A}^{L}_{reg}$. To show the idempotence. Let $T\in Lip(X,F)$ belong to $\left(\textfrak{A}^{L}_{reg}\right)_{reg}$. Then $K_{F}T\in\textfrak{A}^{L}_{reg}(X, F^{**})$ and $K_{F^{**}} K_{F} T\in\textfrak{A}^{L}(X, F^{****})$. Now $I_{F^{**}}=(K_{F^{*}})^{*} K_{F^{**}}$ implies $$K_{F}T=(K_{F^{*}})^{*}\left(K_{F^{**}} K_{F} T\right)\in\textfrak{A}^{L}(X, F^{**})$$ and therefore $T\in\textfrak{A}^{L}_{reg}(X, F)$. Hence $\left(\textfrak{A}^{L}_{reg}\right)_{reg}\subseteq\textfrak{A}^{L}_{reg}$. The converse inclusion is trivial. \\
\end{proof}

\begin{rem}
A nonlinear ideal $\textfrak{A}^{L}$ is called regular if $\textfrak{A}^{L}=\textfrak{A}^{L}_{reg}$.
\end{rem} 

\begin{prop}\label{Auto25}
Let $\mathfrak{A}$ be an ideal. Then $\mathfrak{A}^{L}_{dual}$ is regular.
\end{prop}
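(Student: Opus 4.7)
The goal is to show $\mathfrak{A}^{L}_{dual} = (\mathfrak{A}^{L}_{dual})_{reg}$. One inclusion, $\mathfrak{A}^{L}_{dual} \subseteq (\mathfrak{A}^{L}_{dual})_{reg}$, is free from the fact that $reg$ is a hull procedure (already established in the preceding proposition), so the work lies entirely in the reverse inclusion.

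For the reverse inclusion, my plan is to start with $T\in Lip(X,F)$ such that $K_{F} T \in \mathfrak{A}^{L}_{dual}(X, F^{**})$ and deduce $T\in \mathfrak{A}^{L}_{dual}(X,F)$. By unwinding the definition of $dual$, the hypothesis reads $(K_{F} T)^{\#}_{|(F^{**})^{*}} \in \mathfrak{A}(F^{***}, X^{\#})$, and I want to establish $T^{\#}_{|F^{*}} \in \mathfrak{A}(F^{*}, X^{\#})$.

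The key identity is supplied by Lemma \ref{Auto16}, which applied with $A=I_{X}$ and $B=K_{F}$ yields
$$
(K_{F} T)^{\#}_{|(F^{**})^{*}} = T^{\#}_{|F^{*}}\, (K_{F})^{*}.
$$
I would then exploit the standard Banach space identity $(K_{F})^{*} K_{F^{*}} = I_{F^{*}}$, which gives
$$
T^{\#}_{|F^{*}} = T^{\#}_{|F^{*}}\, (K_{F})^{*} K_{F^{*}} = (K_{F} T)^{\#}_{|(F^{**})^{*}}\, K_{F^{*}}.
$$
Since $(K_{F} T)^{\#}_{|(F^{**})^{*}} \in \mathfrak{A}(F^{***}, X^{\#})$ by hypothesis and $K_{F^{*}} \in \mathfrak{L}(F^{*}, F^{***})$, the ideal property $\mathbf{(OI_2)}$ of $\mathfrak{A}$ yields $T^{\#}_{|F^{*}} \in \mathfrak{A}(F^{*}, X^{\#})$, i.e., $T\in \mathfrak{A}^{L}_{dual}(X,F)$, completing the proof.

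The proof is essentially routine once one recognizes that the issue reduces to the classical relation $(K_{F})^{*} K_{F^{*}} = I_{F^{*}}$; the only step that requires a bit of care is the application of Lemma \ref{Auto16} for the Lipschitz dual of $K_{F}T$, since that lemma was stated for a composition $BTA$ with a bounded linear operator on the left, which is exactly the configuration needed here.
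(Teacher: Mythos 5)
Your proof is correct and follows essentially the same route as the paper's: both reduce the nontrivial inclusion $(\mathfrak{A}^{L}_{dual})_{reg}\subseteq\mathfrak{A}^{L}_{dual}$ to the identity $(K_{F}T)^{\#}_{|(F^{**})^{*}}=T^{\#}_{|F^{*}}(K_{F})^{*}$ and then apply $I_{F^{*}}=(K_{F})^{*}K_{F^{*}}$ together with the ideal property of $\mathfrak{A}$. You are merely slightly more explicit in citing Lemma~\ref{Auto16} as the source of the first identity, which the paper takes as understood.
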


\begin{proof}
We check only the inclusion $\left(\mathfrak{A}^{L}_{dual}\right)_{reg}\subseteq\mathfrak{A}^{L}_{dual}$. Let $T\in Lip(X,F)$ belong to $\left(\mathfrak{A}^{L}_{dual}\right)_{reg}$. Then $K_{F}T\in\mathfrak{A}^{L}_{dual}(X, F^{**})$ and $T^{\#}_{|_{{F}^{*}}} (K_{F})^{*}\in\mathfrak{A}(F^{***}, X^{\#})$. Now $I_{F^{*}}=(K_{F})^{*} K_{F^{*}}$ implies $T^{\#}_{|_{{F}^{*}}}=T^{\#}_{|_{{F}^{*}}} (K_{F})^{*} K_{F^{*}}\in\mathfrak{A}(F^{*}, X^{\#})$ then $T\in\mathfrak{A}^{L}_{dual}(X, F)$. \\
\end{proof}

The Proposition \ref{Auto25} gives the following result.
\begin{lem}
If $\mathfrak{A}$ be an ideal, then every completely symmetric nonlinear ideal is regular and symmetric.
\end{lem}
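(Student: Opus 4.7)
The plan is to derive the conclusion directly from the definitions together with Proposition \ref{Auto25}, so the proof should be very short. Let $\textfrak{A}^{L}$ be a completely symmetric nonlinear ideal, which by definition means $\textfrak{A}^{L} = \mathfrak{A}^{L}_{dual}$ as classes of Lipschitz operators.

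First I would handle symmetry, which is immediate: the equality $\textfrak{A}^{L} = \mathfrak{A}^{L}_{dual}$ trivially implies the inclusion $\textfrak{A}^{L} \subseteq \mathfrak{A}^{L}_{dual}$, and that inclusion is exactly the definition of $\textfrak{A}^{L}$ being symmetric.

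Next I would handle regularity by invoking Proposition \ref{Auto25}, which tells us that $\mathfrak{A}^{L}_{dual}$ is regular, i.e.\ $(\mathfrak{A}^{L}_{dual})_{reg} = \mathfrak{A}^{L}_{dual}$. Substituting $\textfrak{A}^{L}$ for $\mathfrak{A}^{L}_{dual}$ via the complete symmetry hypothesis yields $\textfrak{A}^{L}_{reg} = (\mathfrak{A}^{L}_{dual})_{reg} = \mathfrak{A}^{L}_{dual} = \textfrak{A}^{L}$, which is the definition of $\textfrak{A}^{L}$ being regular.

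There is essentially no obstacle here, since the statement is a purely formal consequence of chaining the defining equality of complete symmetry with the already-established Proposition \ref{Auto25}; the only thing worth double-checking is that the $reg$ procedure is well-defined on a class arising as $\mathfrak{A}^{L}_{dual}$ (which follows from the fact that $\mathfrak{A}^{L}_{dual}$ is itself a nonlinear ideal, as proved just above in the text), so that the substitution step is legitimate.
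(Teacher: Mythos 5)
Your proof is correct and follows exactly the route the paper intends: the paper states this lemma as an immediate corollary of Proposition \ref{Auto25} without writing out the argument, and your two-line unwinding (equality gives the inclusion needed for symmetry; complete symmetry lets you transport the regularity of $\mathfrak{A}^{L}_{dual}$ from Proposition \ref{Auto25} to $\textfrak{A}^{L}$) is precisely the implied derivation.
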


\begin{lem}
The nonlinear ideal $\textfrak{F}^{L}$ is regular.
\end{lem}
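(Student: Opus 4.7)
The plan is to prove the two inclusions $\textfrak{F}^{L}\subseteq\textfrak{F}^{L}_{reg}$ and $\textfrak{F}^{L}_{reg}\subseteq\textfrak{F}^{L}$. The first is automatic from the preceding proposition establishing that $reg$ is a hull Lipschitz procedure (so $\textfrak{A}^{L}\subseteq\textfrak{A}^{L}_{reg}$ for any $\textfrak{A}^{L}$). All the work therefore lies in the reverse inclusion.

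For the reverse inclusion, I would fix $T\in\textfrak{F}^{L}_{reg}(X,F)$, which by definition means $K_{F}T\in\textfrak{F}^{L}(X,F^{**})$. Unpacking the definition of Lipschitz finite rank operator, there exist $g_{1},\dots, g_{m}\in X^{\#}$ and $e^{**}_{1},\dots, e^{**}_{m}\in F^{**}$ such that $K_{F}T=\sum_{j=1}^{m} g_{j}\boxtimes e^{**}_{j}$, so that for every $x\in X$ one has $(K_{F}T)(x)=\sum_{j=1}^{m} g_{j}(x)\, e^{**}_{j}$. In particular $K_{F}(T(X))\subseteq\mathrm{span}\{e^{**}_{1},\dots, e^{**}_{m}\}$.

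The key transfer step is to move the finite-dimensionality back from $F^{**}$ to $F$. Since $K_{F}$ is linear and injective, $K_{F}(\mathrm{span}(T(X)))=\mathrm{span}(K_{F}(T(X)))$ is finite-dimensional, and injectivity of $K_{F}$ forces $M:=\mathrm{span}(T(X))$ to be a finite-dimensional subspace of $F$, of dimension $k\leq m$. Pick a basis $f_{1},\dots, f_{k}$ of $M$ and let $\pi_{1},\dots,\pi_{k}\in M^{*}$ be the associated coordinate functionals, extended by Hahn--Banach (or any continuous projection onto $M$) to elements of $F^{*}$. Setting $h_{i}:=\pi_{i}\circ T\colon X\to\mathbb{K}$ for $i=1,\dots, k$, each $h_{i}$ is Lipschitz as a composition of a Lipschitz map with a bounded linear functional, and $h_{i}(0)=\pi_{i}(T(0))=\pi_{i}(0)=0$, so $h_{i}\in X^{\#}$. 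Expanding each $T(x)\in M$ in the chosen basis gives $T(x)=\sum_{i=1}^{k} h_{i}(x)\, f_{i}$, that is $T=\sum_{i=1}^{k} h_{i}\boxtimes f_{i}\in\textfrak{F}^{L}(X,F)$.

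The main obstacle is the transfer step: one must be sure that the finite-rank factorization of $K_{F}T$ at the level of $F^{**}$ (where the $e^{**}_{j}$ need not lie in the canonical image of $F$) nevertheless forces $T$ itself to have range in a finite-dimensional subspace of $F$. Once this is handled via the linearity and injectivity of $K_{F}$, the remaining verifications that the coordinate functionals lie in $X^{\#}$ and that the resulting elementary tensors sum to $T$ are routine.
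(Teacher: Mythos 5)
Your proof is correct, and it takes a genuinely different route from the paper. The paper disposes of the statement in one line by invoking the earlier lemma that $\textfrak{F}^{L}$ is completely symmetric together with the general fact that every completely symmetric nonlinear ideal is regular (which in turn rests on the proposition that $\mathfrak{A}^{L}_{dual}$ is always regular). You instead argue directly: since $reg$ is a hull procedure the inclusion $\textfrak{F}^{L}\subseteq\textfrak{F}^{L}_{reg}$ is free, and for the converse you observe that if $K_{F}T\in\textfrak{F}^{L}(X,F^{**})$ then $K_{F}(\mathrm{span}\,T(X))$ is contained in a finite-dimensional subspace of $F^{**}$, so injectivity and linearity of $K_{F}$ force $M:=\mathrm{span}\,T(X)$ to be finite-dimensional in $F$; choosing a basis of $M$ and Hahn--Banach extensions of its coordinate functionals then yields $T=\sum_{i=1}^{k}h_{i}\boxtimes f_{i}$ with $h_{i}=\pi_{i}\circ T\in X^{\#}$ (the condition $h_{i}(0)=0$ holds because $T(0)=0$), hence $T\in\textfrak{F}^{L}(X,F)$. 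The paper's proof is shorter at the cost of routing through the complete-symmetry machinery (identification of $T^{\#}_{|F^{*}}$ as a finite-rank operator and a duality argument); yours is self-contained and more elementary, needing only that a linear injection preserves finite-dimensionality of a subspace, which is arguably the more transparent reason the regular hull of $\textfrak{F}^{L}$ does not enlarge it. Both are valid.
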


\begin{proof}
The regularity of $\textfrak{F}^{L}$ is implied by its completely symmetry. \\
\end{proof}

\subsection{Injective Nonlinear Ideals}\label{Auto20}
Let $\textfrak{A}^{L}$ be a nonlinear ideal. A Lipschitz operator $T\in Lip(X,F)$ belongs to the injective hull $\textfrak{A}^{L}_{inj}$ if $J_{F}T\in\textfrak{A}^{L}(X, F^{inj})$.

\begin{prop}
$\textfrak{A}^{L}_{inj}$ is a nonlinear ideal. 
\end{prop}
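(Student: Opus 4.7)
The plan is to verify the three axioms $\bf (\widetilde{NOI_0})$, $\bf (\widetilde{NOI_1})$, $\bf (\widetilde{NOI_2})$ for $\textfrak{A}^{L}_{inj}$ by pulling each one back across $J_{F}$ to the corresponding axiom already satisfied by $\textfrak{A}^{L}$. Because $J_{F}:F\to F^{inj}$ is a linear metric injection, the first two axioms transfer essentially by formal linearity, and the only place requiring genuine work is the composition axiom, where the metric extension property of $F^{inj}=\ell_{\infty}(B_{F^{*}})$ must be invoked.

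For $\bf (\widetilde{NOI_0})$, I would take $g\in X^{\#}$ and $e\in F$ and use linearity of $J_{F}$ to write
$$J_{F}\circ(g\boxtimes e)\;:\;x\longmapsto J_{F}\bigl(g(x)\cdot e\bigr)=g(x)\cdot J_{F}(e),$$
so $J_{F}\circ(g\boxtimes e)=g\boxtimes J_{F}(e)\in X^{\#}\boxplus F^{inj}$, which lies in $\textfrak{A}^{L}(X,F^{inj})$ by $\bf (\widetilde{NOI_0})$ for $\textfrak{A}^{L}$. For $\bf (\widetilde{NOI_1})$, given $T,S\in\textfrak{A}^{L}_{inj}(X,F)$, linearity of $J_{F}$ gives $J_{F}(T+S)=J_{F}T+J_{F}S$, which belongs to $\textfrak{A}^{L}(X,F^{inj})$ by the additivity axiom for $\textfrak{A}^{L}$.

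The substantive step is $\bf (\widetilde{NOI_2})$: given $A\in Lip(X_{0},X)$, $T\in\textfrak{A}^{L}_{inj}(X,F)$, and $B\in\mathfrak{L}(F,F_{0})$, I must show $J_{F_{0}}BTA\in\textfrak{A}^{L}(X_{0},F_{0}^{inj})$ knowing only that $J_{F}T\in\textfrak{A}^{L}(X,F^{inj})$. The hard part, and the entire point of introducing $F^{inj}$, is that $F_{0}^{inj}=\ell_{\infty}(B_{F_{0}^{*}})$ has the metric extension property, while $J_{F}$ realizes $F$ as a subspace of $F^{inj}$. Consequently the bounded linear operator $J_{F_{0}}B:F\to F_{0}^{inj}$ extends to some $\widehat{B}\in\mathfrak{L}(F^{inj},F_{0}^{inj})$ satisfying $\widehat{B}\,J_{F}=J_{F_{0}}B$. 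Then
$$J_{F_{0}}(BTA)=(J_{F_{0}}B)\,TA=\widehat{B}\,(J_{F}T)\,A,$$
and the right-hand side is the composition of $A\in Lip(X_{0},X)$, the member $J_{F}T\in\textfrak{A}^{L}(X,F^{inj})$, and $\widehat{B}\in\mathfrak{L}(F^{inj},F_{0}^{inj})$, so it lies in $\textfrak{A}^{L}(X_{0},F_{0}^{inj})$ by $\bf (\widetilde{NOI_2})$ applied to $\textfrak{A}^{L}$. This is precisely the condition for $BTA\in\textfrak{A}^{L}_{inj}(X_{0},F_{0})$, completing the verification. Apart from the extension argument, which is the unique nontrivial ingredient, every other step is bookkeeping on the axioms of $\textfrak{A}^{L}$.
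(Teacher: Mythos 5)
Your proof is correct and follows essentially the same approach as the paper: the only substantive step is the verification of $\bf (\widetilde{NOI_2})$, where both you and the paper invoke the metric extension property of $F_{0}^{inj}$ to extend $J_{F_{0}}B$ to an operator $\widehat{B}=B^{inj}\in\mathfrak{L}(F^{inj},F_{0}^{inj})$ with $\widehat{B}J_{F}=J_{F_{0}}B$, then factor $J_{F_{0}}(BTA)=\widehat{B}(J_{F}T)A$. The only cosmetic difference is in $\bf (\widetilde{NOI_0})$: you observe directly that $J_{F}(g\boxtimes e)=g\boxtimes J_{F}e$ is again an elementary tensor, whereas the paper instead composes $g\boxtimes e\in\textfrak{A}^{L}(X,F)$ with $J_{F}$ via the composition axiom — both yield the same conclusion.
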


\begin{proof}
The condition $\bf (\widetilde{NOI_0})$ satisfied. Since $g\boxtimes e\in\textfrak{A}^{L}(X, F)$ and using nonlinear composition ideal property we have $J_{F}(g\boxtimes e)\in\textfrak{A}^{L}(X, F^{inj})$. To prove the condition $\bf (\widetilde{NOI_1})$, let $T$ and $S$ in $\textfrak{A}^{L}_{inj}(X,F)$. Then $J_{F}T$ and $J_{F}S$ in $\textfrak{A}^{L}(X, F^{inj})$, we have $J_{F}(T+S)=J_{F}T + J_{F}S\in\textfrak{A}^{L}(X, F^{inj})$. 

Let $A\in Lip(X_{0},X)$, $T\in\textfrak{A}^{L}_{inj}(X, F)$, and $B\in\mathfrak{L}(F,F_{0})$. Since $F_{0}^{inj}$ has the extension property, there exists $B^{inj}\in\mathfrak{L}(F^{inj},F_{0}^{inj})$ such that

$$
\begin{tikzcd}[row sep=5.0em, column sep=5.0em]
X   \arrow{r}{T}                & F   \arrow{r}{J_F} \arrow{d}{B} & F^{inj} \arrow{d}{B^{inj}} \\
X_0 \arrow{r}{BTA} \arrow{u}{A} & F_0 \arrow{r}{J_{F_0}}          & F_0^{inj}                  \\
\end{tikzcd}
\vspace{-50pt}
$$

Consequently $J_{F_{0}}\left(BTA\right)=B^{inj}\left(J_{F}T\right)A\in\textfrak{A}^{L}$, hence the condition $\bf (\widetilde{NOI_2})$ satisfied. \\
\end{proof}

\begin{lem}\label{Auto21}
Let $F$ be a Banach space possessing the extension property. Then $\textfrak{A}^{L}(X, F)=\textfrak{A}^{L}_{inj}(X, F)$.
\end{lem}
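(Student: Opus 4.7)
The plan is to prove both inclusions separately. The inclusion $\textfrak{A}^{L}(X,F) \subseteq \textfrak{A}^{L}_{inj}(X,F)$ is the easy direction: if $T \in \textfrak{A}^{L}(X,F)$, then since $J_{F} \in \mathfrak{L}(F, F^{inj})$, the nonlinear composition ideal property $\bf (\widetilde{NOI_{2}})$ applied with $B = J_{F}$, and with $A$ the identity on $X$, gives $J_{F} T \in \textfrak{A}^{L}(X, F^{inj})$, which is exactly the condition $T \in \textfrak{A}^{L}_{inj}(X,F)$.

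The reverse inclusion $\textfrak{A}^{L}_{inj}(X,F) \subseteq \textfrak{A}^{L}(X,F)$ is where the extension property is used. First I would exploit the hypothesis that $F$ has the extension property to produce a bounded linear left inverse $P \in \mathfrak{L}(F^{inj}, F)$ of $J_{F}$. Concretely, since $J_{F}: F \to F^{inj}$ is a metric injection, $J_{F}(F)$ is a closed linear subspace of $F^{inj}$ isometrically isomorphic to $F$ via $J_{F}$. The bounded linear operator $J_{F}^{-1}: J_{F}(F) \to F$ must then be extended to a bounded linear operator on the whole ambient space $F^{inj}$; because $F$ has the metric extension property, such an extension $P: F^{inj} \to F$ exists, and by construction $P J_{F} = I_{F}$.

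Second, given $T \in \textfrak{A}^{L}_{inj}(X,F)$, by definition $J_{F} T \in \textfrak{A}^{L}(X, F^{inj})$. Then I would compose on the left with $P \in \mathfrak{L}(F^{inj}, F)$ and invoke $\bf (\widetilde{NOI_{2}})$ once more, which yields $P J_{F} T \in \textfrak{A}^{L}(X, F)$; but $P J_{F} T = I_{F} T = T$, giving $T \in \textfrak{A}^{L}(X,F)$.

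The only nontrivial point is the existence of the linear left inverse $P$, but this is precisely the content of the extension property of $F$ as formulated in \cite[Sec. C.3]{P78}; once $P$ is in hand, both inclusions reduce to one application of the ideal axiom $\bf (\widetilde{NOI_{2}})$, so no further calculation is required.
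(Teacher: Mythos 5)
Your argument is correct and coincides with the paper's: the paper likewise uses the extension property of $F$ to produce a bounded linear left inverse $B\in\mathfrak{L}(F^{inj},F)$ of $J_{F}$ and then writes $T=B(J_{F}T)\in\textfrak{A}^{L}(X,F)$, while dismissing the inclusion $\textfrak{A}^{L}\subseteq\textfrak{A}^{L}_{inj}$ as obvious. You simply spell out that easy direction (one application of $\bf (\widetilde{NOI_{2}})$ with $J_{F}$) and give a more detailed construction of the left inverse, but the substance is the same.
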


\begin{proof}
By hypothesis there exists $B\in\mathfrak{L}(F^{inj},F)$ such that $BJ_{F}=I_{F}$. Therefore $T\in\textfrak{A}^{L}_{inj}(X, F)$ implies that $T=B\left(J_{F}T\right)\in\textfrak{A}^{L}(X, F)$. This proves that $\textfrak{A}^{L}_{inj}\subseteq\textfrak{A}^{L}$. The converse inclusion is obvious. \\
\end{proof}

\begin{prop}\label{Auto 14}
The rule $$inj: \textfrak{A}^{L}\longrightarrow\textfrak{A}^{L}_{inj}$$
is a hull Lipschitz procedure. 
\end{prop}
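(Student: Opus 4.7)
The plan is to verify the three defining properties of a hull Lipschitz procedure, namely strong monotony $\mathbf{(M)}$, the inclusion $\textfrak{A}^{L}\subseteq\textfrak{A}^{L}_{inj}$, and idempotence $\mathbf{(I)}$. Monotony is immediate from the definition: if $\textfrak{A}^{L}\subseteq\textfrak{B}^{L}$ and $T\in\textfrak{A}^{L}_{inj}(X,F)$, then $J_{F}T\in\textfrak{A}^{L}(X,F^{inj})\subseteq\textfrak{B}^{L}(X,F^{inj})$, so $T\in\textfrak{B}^{L}_{inj}(X,F)$.

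For the inclusion $\textfrak{A}^{L}\subseteq\textfrak{A}^{L}_{inj}$, I would use that $J_{F}\in\mathfrak{L}(F,F^{inj})$ is a bounded linear operator (in fact a metric injection); hence, whenever $T\in\textfrak{A}^{L}(X,F)$, property $\mathbf{(\widetilde{NOI_2})}$ applied with $A=I_{X}$ and $B=J_{F}$ yields $J_{F}T\in\textfrak{A}^{L}(X,F^{inj})$, so $T\in\textfrak{A}^{L}_{inj}(X,F)$.

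The only genuinely interesting step is idempotence. One inclusion $\textfrak{A}^{L}_{inj}\subseteq\bigl(\textfrak{A}^{L}_{inj}\bigr)_{inj}$ follows directly from the inclusion property just established, applied to the nonlinear ideal $\textfrak{A}^{L}_{inj}$ in place of $\textfrak{A}^{L}$. For the converse, suppose $T\in\bigl(\textfrak{A}^{L}_{inj}\bigr)_{inj}(X,F)$; by definition this means $J_{F}T\in\textfrak{A}^{L}_{inj}(X,F^{inj})$. The crucial observation is that $F^{inj}=\ell_{\infty}(B_{F^{*}})$ possesses the metric extension property by construction. Consequently Lemma \ref{Auto21}, applied to the Banach space $F^{inj}$, gives
$$\textfrak{A}^{L}_{inj}(X,F^{inj})=\textfrak{A}^{L}(X,F^{inj}),$$
and therefore $J_{F}T\in\textfrak{A}^{L}(X,F^{inj})$, which says precisely that $T\in\textfrak{A}^{L}_{inj}(X,F)$.

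The main obstacle, and the only place where a nontrivial fact is used, is the idempotence step; it hinges entirely on the availability of Lemma \ref{Auto21} together with the fact that $F^{inj}$ itself has the extension property, so that taking the injective hull a second time adds nothing new. Everything else is a routine unwinding of the definition of $\textfrak{A}^{L}_{inj}$.
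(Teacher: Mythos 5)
Your proof is correct and follows essentially the same route as the paper: monotony is read off the definition, the inclusion $\textfrak{A}^{L}\subseteq\textfrak{A}^{L}_{inj}$ comes from the ideal composition property with $B=J_{F}$, and idempotence is obtained by invoking Lemma \ref{Auto21} for the space $F^{inj}$, which has the metric extension property. The only difference is cosmetic—you spell out the monotony and inclusion steps more explicitly than the paper, which dismisses them as obvious or defers to the preceding lemma.
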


\begin{proof}
The property $\bf (M)$ is obvious. From the preceding lemma we obtain $\textfrak{A}^{L}\subseteq\textfrak{A}^{L}_{inj}$. To show the idempotence. Let $T\in Lip(X,F)$ belong to $\left(\textfrak{A}^{L}_{inj}\right)_{inj}$. Then $J_{F}T\in\textfrak{A}^{L}_{inj}(X, F^{inj})$, and the preceding lemma implies $J_{F}T\in\textfrak{A}^{L}(X, F^{inj})$. Consequently $T\in\textfrak{A}^{L}_{inj}(X, F)$. Thus $\left(\textfrak{A}^{L}_{inj}\right)_{inj}\subseteq\textfrak{A}^{L}_{inj}$. The converse inclusion is trivial. \\
\end{proof}

\begin{prop}
Let $\mathsf{A}$ be a space ideal. Then $\left[Op^{L}(\mathsf{A})\right]_{inj}\subseteq Op^{L}(\mathsf{A^{inj}})$.
\end{prop}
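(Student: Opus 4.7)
The plan is to take an arbitrary $T \in \left[Op^{L}(\mathsf{A})\right]_{inj}(X,F)$ and produce a factorization of $T$ itself (not of $J_{F}T$) through a Banach space belonging to $\mathsf{A}^{inj}$. By the definition of the injective hull, $J_{F}T \in Op^{L}(\mathsf{A})(X,F^{inj})$, so we may write $J_{F}T = B A$ with $A \in Lip(X,\mathbf{M})$, $B \in \mathfrak{L}(\mathbf{M},F^{inj})$ and $\mathbf{M} \in \mathsf{A}$. The issue is that $B$ a priori maps into $F^{inj}$, not into $F$, so we cannot just compose with $J_{F}^{-1}$ on the whole of $\mathbf{M}$. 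The key move is to cut $\mathbf{M}$ down to the portion that lands in the image of $F$.

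Concretely, I would set $N := B^{-1}(J_{F}(F))$. Since $J_{F}: F \to F^{inj}$ is a metric injection and $F$ is complete, $J_{F}(F)$ is closed in $F^{inj}$; continuity of $B$ then makes $N$ a closed linear subspace of $\mathbf{M}$. The equation $J_{F}T = BA$ gives $B(A(x)) = J_{F}(T(x)) \in J_{F}(F)$ for all $x \in X$, so $A(X) \subseteq N$ and $A$ corestricts to a Lipschitz map $\widetilde{A}: X \to N$ with $Lip(\widetilde{A}) = Lip(A)$. Since $J_{F}$ is an isometry onto $J_{F}(F)$, the formula $\widetilde{B}(n) := J_{F}^{-1}(B(n))$ defines a bounded linear operator $\widetilde{B}: N \to F$ with $\|\widetilde{B}\| \leq \|B\|$. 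A direct verification then gives $\widetilde{B}\widetilde{A}(x) = J_{F}^{-1}(B(A(x))) = J_{F}^{-1}(J_{F}(T(x))) = T(x)$, so $T = \widetilde{B} \widetilde{A}$.

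It remains to show $N \in \mathsf{A}^{inj}$. The inclusion $\iota: N \hookrightarrow \mathbf{M}$ is a metric injection (as $N$ carries the inherited norm), and $\mathbf{M} \in \mathsf{A}$. By the definition of the injective hull of a space ideal — a Banach space belongs to $\mathsf{A}^{inj}$ exactly when it admits a metric injection into some member of $\mathsf{A}$, equivalently when it is isometrically isomorphic to a closed subspace of a space in $\mathsf{A}$ — this forces $N \in \mathsf{A}^{inj}$. Combining this with the factorization of the previous paragraph yields $T \in Op^{L}(\mathsf{A}^{inj})(X,F)$, which is the desired inclusion.

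The main obstacle I expect is bookkeeping rather than a deep step: one must verify that the restricted subspace $N$ really inherits the metric injection property and is recognized by the paper's convention for $\mathsf{A}^{inj}$. If the paper instead codifies $\mathsf{A}^{inj}$ via $F \in \mathsf{A}^{inj} \Leftrightarrow F^{inj} \in \mathsf{A}$, a small extra step is needed: $N$ metrically embeds in $\mathbf{M}\in\mathsf{A}$, and by the metric extension property of $N^{inj}$ the embedding lifts to a bounded operator $\mathbf{M} \to N^{inj}$ whose image together with $J_{N}(N)$ provides the factorization needed to conclude $N^{inj}\in\mathsf{A}$. Either way the essential content is the construction of $N$, $\widetilde{A}$ and $\widetilde{B}$ above.
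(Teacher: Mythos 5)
Your proof is correct, and it follows the same overall strategy as the paper: start from a factorization $J_{F}T = BA$ through $\mathbf{M}\in\mathsf{A}$, cut $\mathbf{M}$ down to a closed subspace on which $B$ lands inside $J_{F}(F)$, corestrict $A$ and restrict/compose $B$ with $J_{F}^{-1}$, and conclude by noting that a closed subspace of a member of $\mathsf{A}$ belongs to $\mathsf{A}^{inj}$. The one genuine difference is the choice of subspace: the paper puts $\mathbf{M}_{0}:=\overline{R_{A}}$ (the closure of the range of $A$, in effect the closed linear span of $A(X)$), while you take $N := B^{-1}\bigl(J_{F}(F)\bigr)$. Your $N$ contains the paper's $\mathbf{M}_{0}$, and both are closed subspaces of $\mathbf{M}$, so both lie in $\mathsf{A}^{inj}$. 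Your choice has a small technical advantage: $B^{-1}\bigl(J_{F}(F)\bigr)$ is automatically a closed \emph{linear} subspace (preimage of a closed subspace under a bounded linear operator), whereas $R_{A}$ is merely the image of a Lipschitz map and need not be a linear set, so the paper is implicitly taking a closed linear span there without saying so. Your final remark about the two possible conventions for $\mathsf{A}^{inj}$ is also well taken; under Pietsch's definition (the one the paper uses via \cite{P78}), membership in $\mathsf{A}^{inj}$ is detected precisely by isometric embedding into a member of $\mathsf{A}$, so the inclusion $N\hookrightarrow\mathbf{M}$ suffices with no extra work.
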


\begin{proof}
Let $T\in Lip(X,F)$ belong to $\left[Op^{L}(\mathsf{A})\right]_{inj}$. Then $J_{F}T\in Op^{L}(\mathsf{A})(X, F^{inj})$, so $J_{F} T=BA$ such that $A\in Lip(X,\mathbf{M})$, $B\in\mathfrak{L}(\mathbf{M},F^{inj})$, and $\mathbf{M}\in\mathsf{A}$. Put $\mathbf{M_{0}}:=\overline{R_{A}}$, and let $A_{0}\in Lip(X,\mathbf{M_{0}})$ be the Lipschitz operator induced by $A$. Obviously $B\mathbf{m}\in R_{J_{F}}$ for $\mathbf{m}\in R_{A}$. Consequently $B(\mathbf{M_{0}})\subseteq R_{J_{F}}$, and $B_{0}:=J_{F}^{-1} B J_{\mathbf{M_{0}}}^{\mathbf{M}}$  is well--defined. Finally, $T=B_{0} A_{0}$ and $\mathbf{M_{0}}\in\mathsf{A^{inj}}$ imply $T\in Op^{L}(\mathsf{A^{inj}})$. This proves $\left[Op^{L}(\mathsf{A})\right]_{inj}\subseteq  Op^{L}(\mathsf{A^{inj}})$.\\
\end{proof}

\begin{rem}
A nonlinear ideal $\textfrak{A}^{L}$ is called injective if $\textfrak{A}^{L}=\textfrak{A}^{L}_{inj}$.
\end{rem}

The injectivity of a nonlinear ideal $\textfrak{A}^{L}$ means that it does not depend on the size of the target space $F$ whether or not a Lipschitz operator $T\in Lip(X,F)$  belongs to $\textfrak{A}^{L}$.

\begin{prop}\label{Auto22}
A nonlinear ideal $\textfrak{A}^{L}$ is injective if and only if for every injection $J\in\mathfrak{L}(F_{0}, F)$ and for every Lipschitz operator $T_{0}\in Lip(X, F_{0})$ it follows from $JT_{0}\in\textfrak{A}^{L}(X, F)$ that $T_{0}\in\textfrak{A}^{L}(X, F_{0})$.
\end{prop}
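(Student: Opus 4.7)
The plan is to prove the equivalence by treating the two implications separately, leveraging the metric extension property of $F_{0}^{inj}$ on the forward side and the very definition of $\textfrak{A}^{L}_{inj}$ on the backward side.

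For the necessity direction ($\Rightarrow$), I assume $\textfrak{A}^{L} = \textfrak{A}^{L}_{inj}$ and take a metric injection $J\in\mathfrak{L}(F_{0}, F)$ together with $T_{0}\in Lip(X, F_{0})$ satisfying $JT_{0}\in\textfrak{A}^{L}(X, F)$. Because $F_{0}^{inj}=\ell_{\infty}(B_{F_{0}^{*}})$ has the metric extension property, the canonical metric injection $J_{F_{0}}:F_{0}\longrightarrow F_{0}^{inj}$ extends along the isometric embedding $J$ to some $\widetilde{J}\in\mathfrak{L}(F, F_{0}^{inj})$ with $\widetilde{J} J = J_{F_{0}}$. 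Then by $\bf (\widetilde{NOI_2})$, applied to the bounded operator $\widetilde{J}$ composed from the left with $JT_{0}$, I obtain $J_{F_{0}} T_{0} = \widetilde{J}(JT_{0})\in\textfrak{A}^{L}(X, F_{0}^{inj})$, that is, $T_{0}\in\textfrak{A}^{L}_{inj}(X, F_{0})$. The injectivity hypothesis then yields $T_{0}\in\textfrak{A}^{L}(X, F_{0})$, which is the required conclusion.

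For the sufficiency direction ($\Leftarrow$), Proposition \ref{Auto 14} already supplies the inclusion $\textfrak{A}^{L}\subseteq\textfrak{A}^{L}_{inj}$, so it suffices to establish the reverse inclusion. Given $T\in\textfrak{A}^{L}_{inj}(X, F)$, the definition of the injective hull provides $J_{F} T\in\textfrak{A}^{L}(X, F^{inj})$. Since $J_{F}$ is itself a metric injection from $F$ into $F^{inj}$, applying the stated hypothesis with $F_{0} := F$, $T_{0} := T$ and $J := J_{F}$ delivers $T\in\textfrak{A}^{L}(X, F)$, as required.

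The step I expect to be the main obstacle is the production of the extension $\widetilde{J}$ in the forward direction; it is here that the particular structure of $F_{0}^{inj}$ as an $\ell_{\infty}$-space (equivalently, a $\mathcal{P}_{1}$-space enjoying the metric extension property) enters in an essential way. Everything else is a transparent combination of the composition axiom $\bf (\widetilde{NOI_2})$ with the definition of $\textfrak{A}^{L}_{inj}$ and is essentially bookkeeping.
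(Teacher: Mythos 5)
Your proof is correct and takes essentially the same route as the paper: for necessity, use the metric extension property of $F_{0}^{inj}$ to factor $J_{F_{0}}$ through $J$, then push $JT_{0}$ forward to conclude $T_{0}\in\textfrak{A}^{L}_{inj}(X,F_{0})=\textfrak{A}^{L}(X,F_{0})$; for sufficiency, apply the hypothesis to the canonical injection $J_{F}$. One minor caution: you phrase $J$ as a \emph{metric} injection, whereas the statement requires the argument for any injection in $\mathfrak{L}(F_{0},F)$; the extension property of $F_{0}^{inj}$ still applies (up to a constant), and since only set membership in $\textfrak{A}^{L}$ is at stake here, this does not affect the conclusion.
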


\begin{proof}
To check the necessity we consider an injective nonlinear ideal $\textfrak{A}^{L}$. Let $T_{0}\in Lip(X, F_{0})$ such that $JT_{0}\in\textfrak{A}^{L}(X, F)$. Since $F_{0}^{inj}$ has the extension property, there is $B\in\mathfrak{L}(F, F_{0}^{inj})$ with $J_{F_{0}}=BJ$. Hence it follows from $J_{F_{0}} T_{0}=B\left(JT_{0}\right)\in\textfrak{A}^{L}(X, F_{0}^{inj})$ that $T_{0}\in\textfrak{A}^{L}_{inj}(X, F_{0})=\textfrak{A}^{L}(X, F_{0})$. 

Conversely, let us suppose that the given condition is satisfied. If $T_{0}\in\textfrak{A}^{L}_{inj}(X, F_{0})$, then $J_{F_{0}} T_{0}\in\textfrak{A}^{L}(X, F_{0}^{inj})$. Since $J_{F_{0}}$ is an injection, we obtain $T_{0}\in\textfrak{A}^{L}(X, F_{0})$. Therefore $\textfrak{A}^{L}=\textfrak{A}^{L}_{inj}$, which proves the sufficiency. \\
\end{proof}
\subsection{Surjective Nonlinear Ideals}
Let $\textswab{A}^{L}$ be a nonlinear operator. A Lipschitz operator $T\in Lip(E,F)$ belongs to the surjective hull $\textswab{A}^{L}_{sur}$ if $TQ_{E}\in\textswab{A}^{L}(E^{sur}, F)$.
\begin{prop}
$\textswab{A}^{L}_{sur}$ is a nonlinear ideal. 
\end{prop}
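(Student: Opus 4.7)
The plan is to mirror, in a dualised form, the proof already given for the injective hull $\textfrak{A}^{L}_{inj}$: the role played there by the metric extension property of $F^{inj}$ is here played by the metric lifting property of $E^{sur}=\ell_{1}(B_{E})$. Throughout I will use freely that $\textswab{A}^{L}$ itself satisfies $\bf (NOI_0)$--$\bf (NOI_2)$.

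For $\bf (NOI_0)$, given $h\in E^{\#}$ and $e\in F$ I will observe that $(h\boxtimes e)\circ Q_{E}=(h\circ Q_{E})\boxtimes e$; since $Q_{E}$ is Lipschitz, $h\circ Q_{E}\in (E^{sur})^{\#}$, so the right-hand side lies in $\textswab{A}^{L}(E^{sur},F)$ by $\bf (NOI_0)$ for $\textswab{A}^{L}$, yielding $h\boxtimes e\in\textswab{A}^{L}_{sur}(E,F)$. For $\bf (NOI_1)$, if $T_{1},T_{2}\in\textswab{A}^{L}_{sur}(E,F)$ then $(T_{1}+T_{2})Q_{E}=T_{1}Q_{E}+T_{2}Q_{E}\in\textswab{A}^{L}(E^{sur},F)$ by $\bf (NOI_1)$ for $\textswab{A}^{L}$, so $T_{1}+T_{2}\in\textswab{A}^{L}_{sur}(E,F)$.

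For $\bf (NOI_2)$, with $A\in Lip(E_{0},E)$, $T\in\textswab{A}^{L}_{sur}(E,F)$ and $B\in\mathfrak{L}(F,F_{0})$, I need $(BTA)Q_{E_{0}}\in\textswab{A}^{L}(E_{0}^{sur},F_{0})$. The key step is to invoke the metric lifting property of $E_{0}^{sur}$ applied to the metric surjection $Q_{E}:E^{sur}\to E$, producing a Lipschitz lift $A^{sur}\in Lip(E_{0}^{sur},E^{sur})$ of $A\circ Q_{E_{0}}$, i.e., satisfying $Q_{E}\circ A^{sur}=A\circ Q_{E_{0}}$. With such $A^{sur}$ in hand, I can rewrite
\[
(BTA)Q_{E_{0}}=B\circ T\circ A\circ Q_{E_{0}}=B\circ T\circ Q_{E}\circ A^{sur}=B\circ(TQ_{E})\circ A^{sur},
\]
and since $TQ_{E}\in\textswab{A}^{L}(E^{sur},F)$, $A^{sur}\in Lip$ and $B\in\mathfrak{L}$, the property $\bf (NOI_2)$ for $\textswab{A}^{L}$ yields $(BTA)Q_{E_{0}}\in\textswab{A}^{L}(E_{0}^{sur},F_{0})$, as required.

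The main obstacle will be securing the Lipschitz lift $A^{sur}$ in condition $\bf (NOI_2)$. For a bounded linear $A$ this is the classical lifting property of $\ell_{1}$-spaces; in the present Lipschitz setting one must invoke a Lipschitz analogue of that lifting property for $\ell_{1}(B_{E_{0}})$, exploiting the concrete coordinate-sum form of $Q_{E}$ (presumably constructing $A^{sur}$ by assigning, to each unit vector $\delta_{x_{0}}$ of $\ell_{1}(B_{E_{0}})$, a bounded-norm representative in $Q_{E}^{-1}(A x_{0})\subset E^{sur}$ and extending by $\ell_{1}$-linearity). Once this technical lifting is in place the remainder of the proof is entirely formal and parallels the injective-hull case verbatim.
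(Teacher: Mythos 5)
Your treatment of $\bf (NOI_0)$ and $\bf (NOI_1)$ agrees with the paper's, and for $\bf (NOI_2)$ you follow the paper's strategy of producing a lift $A^{sur}:E_{0}^{sur}\to E^{sur}$ of $A\circ Q_{E_{0}}$ through $Q_{E}$ and then invoking $\bf (NOI_2)$ for $\textswab{A}^{L}$. You are right that constructing this lift is the crux, but your proposed construction does not supply what is needed. Choosing $A^{sur}\delta_{x_{0}}\in Q_{E}^{-1}(Ax_{0})$ for each basis vector $\delta_{x_{0}}$ of $\ell_{1}(B_{E_{0}})$ and extending by $\ell_{1}$-linearity yields a \emph{linear} $A^{sur}$, whence $Q_{E}A^{sur}$ is linear. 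On a general $\xi=\sum_{x_{0}}\xi_{x_{0}}\delta_{x_{0}}\in E_{0}^{sur}$ one gets
\[
Q_{E}A^{sur}(\xi)=\sum_{x_{0}}\xi_{x_{0}}\,A x_{0}\qquad\text{while}\qquad A\circ Q_{E_{0}}(\xi)=A\Bigl(\sum_{x_{0}}\xi_{x_{0}}\,x_{0}\Bigr),
\]
and these coincide only if $A$ is linear on the relevant span. So the required commutativity $Q_{E}A^{sur}=A\circ Q_{E_{0}}$ fails for genuinely nonlinear $A$, and the rewriting $(BTA)Q_{E_{0}}=B(TQ_{E})A^{sur}$ is not justified.

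You should also be aware that the paper's own proof has exactly the same weak point: it asserts the existence of $B^{sur}\in\mathfrak{L}(E_{0}^{sur},E^{sur})$ with $Q_{E}B^{sur}=AQ_{E_{0}}$ by appealing to the (linear) lifting property of $E_{0}^{sur}$, yet $Q_{E}B^{sur}$ would then be linear while $AQ_{E_{0}}$ is not, so no such bounded linear $B^{sur}$ can exist when $A\in Lip(E_{0},E)$ is nonlinear. What is actually required is a genuinely Lipschitz lift of the Lipschitz map $A\circ Q_{E_{0}}$ through the metric surjection $Q_{E}$, and neither the classical $\ell_{1}$-lifting property nor the $\ell_{1}$-linear extension you describe furnishes one. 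Until such a Lipschitz lift is established (or the argument is rerouted to avoid needing it), condition $\bf (NOI_2)$ for $\textswab{A}^{L}_{sur}$ remains unproved.
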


\begin{proof}
The condition $\bf (NOI_0)$ satisfied. Since $h\boxtimes e\in\textswab{A}^{L}(E, F)$ and using nonlinear composition ideal property we have $\left(h\boxtimes e\right) Q_{E}\in\textswab{A}^{L}(E^{sur}, F)$. To prove the condition $\bf (NOI_1)$, let $T_{1}$ and $T_{2}$ in $\textswab{A}^{L}_{sur}(E,F)$. Since $\left(T_{1}+T_{2}\right)Q_{E}=T_{1}Q_{E}+T_{2}Q_{E}$, we have $T_{1} + T_{2}\in\textswab{A}^{L}_{sur}(E,F)$. 

Let $A\in Lip(E_{0},E)$, $T\in\textswab{A}^{L}_{sur}(E,F)$, and $B\in\mathfrak{L}(F,F_{0})$. Since $E_{0}^{sur}$ has the lifting property, there exists $B^{sur}\in\mathfrak{L}(E_{0}^{sur}, E^{sur})$ such that 
$$
\begin{tikzcd}[row sep=5.0em, column sep=5.0em]
E^{sur}   \arrow{r}{Q_{E}}                & E  \arrow{r}{T} \arrow{d}{A} & F \arrow{d}{B} \\
E_{0}^{sur} \arrow{r}{Q_{E_{0}}} \arrow{u}{B^{sur}} & E_0 \arrow{r}{BTA}          & F_{0}                  \\
\end{tikzcd}
\vspace{-50pt}
$$

Consequently $\left(BTA\right) Q_{E_{0}}=B\left(T Q_{E}\right)B^{sur}\in\textfrak{A}^{L}(E_{0}^{sur}, F_{0})$, hence the condition $\bf (NOI_2)$.  \\
\end{proof}

\begin{lem}
Let $E$ be a Banach space with the lifting property. Then $\textswab{A}^{L}(E, F)=\textswab{A}^{L}_{sur}(E, F)$.
\end{lem}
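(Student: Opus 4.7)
The plan is to mirror the argument of Lemma \ref{Auto21}, replacing the extension property by the lifting property and using $Q_E$ in place of $J_F$. By hypothesis, $E$ has the metric lifting property, which is exactly the statement that the metric surjection $Q_{E}: E^{sur}\longrightarrow E$ admits a bounded linear right inverse, i.e. there exists $A\in\mathfrak{L}(E, E^{sur})$ with $Q_{E} A= I_{E}$. This is the only place where the hypothesis on $E$ is used, and it is the conceptual core of the lemma.

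For the nontrivial inclusion $\textswab{A}^{L}_{sur}(E,F)\subseteq\textswab{A}^{L}(E,F)$, I would argue as follows. Let $T\in\textswab{A}^{L}_{sur}(E,F)$, so that by definition $TQ_{E}\in\textswab{A}^{L}(E^{sur}, F)$. Writing
$$T=T\circ I_{E}=T\circ Q_{E}\circ A=\left(TQ_{E}\right)\circ A,$$
I can apply the ideal property $\bf (NOI_{2})$ with the Lipschitz map $A\in\mathfrak{L}(E, E^{sur})\subset Lip(E, E^{sur})$, the Lipschitz operator $TQ_{E}\in\textswab{A}^{L}(E^{sur}, F)$, and the identity $I_{F}\in\mathfrak{L}(F, F)$ on the outside. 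This immediately yields $T\in\textswab{A}^{L}(E,F)$.

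For the converse inclusion $\textswab{A}^{L}(E,F)\subseteq\textswab{A}^{L}_{sur}(E,F)$, which is the routine half, I would simply apply $\bf (NOI_{2})$ again: if $T\in\textswab{A}^{L}(E,F)$, then composing on the right with $Q_{E}\in Lip(E^{sur}, E)$ and on the left with $I_{F}\in\mathfrak{L}(F,F)$ gives $TQ_{E}\in\textswab{A}^{L}(E^{sur}, F)$, so that $T\in\textswab{A}^{L}_{sur}(E,F)$ by definition.

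There is no real obstacle here; the only mild point to keep straight is that the lifting property produces a \emph{linear} section $A$ of $Q_{E}$, which is crucial because $\bf (NOI_{2})$ requires the operator on the outer (codomain) side to be bounded linear, and our use of $A$ on the inner side only requires it to be Lipschitz, so that linearity is more than enough. The argument is entirely formal once the section $A$ is in hand and does not require any structural knowledge of $\textswab{A}^{L}$ beyond the three axioms.
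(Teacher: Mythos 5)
Your proof is correct and follows essentially the same approach as the paper: you use the lifting property to produce a bounded linear right inverse of $Q_{E}$, factor $T=(TQ_{E})\circ A$, and invoke $\bf (NOI_{2})$ for the nontrivial inclusion, while the converse is a direct application of $\bf (NOI_{2})$ on the right with $Q_{E}$. The only cosmetic difference is that the paper names the section $B$ where you name it $A$.
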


\begin{proof}
By hypothesis there exists $B\in\mathfrak{L}(E, E^{sur})$ such that $Q_{E} B=I_{E}$. Therefore $T\in\textswab{A}^{L}_{sur}(E, F)$ implies that $T=\left(T Q_{E}\right)B\in\textswab{A}^{L}(E, F)$. This proves that $\textswab{A}^{L}_{sur}\subseteq\textswab{A}^{L}$. The converse inclusion is obvious. \\
\end{proof}

Similarly to Proposition \ref{Auto 14} and from the preceding lemma we have
\begin{prop}
The rule $$sur: \textswab{A}^{L}\longrightarrow\textswab{A}^{L}_{sur}$$
is a hull Lipschitz procedure. 
\end{prop}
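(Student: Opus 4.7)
The plan is to follow the pattern of Proposition \ref{Auto 14} verbatim, replacing the metric injection $J_F$ with the metric surjection $Q_E$ and appealing to the preceding lemma (concerning spaces with the lifting property) in place of its extension counterpart. Three items require verification: strong monotony $\bf (M)$, the inflationary inclusion $\textswab{A}^{L}\subseteq\textswab{A}^{L}_{sur}$, and idempotence $\bf (I)$.

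Strong monotony is essentially a tautology: if $\textswab{A}^{L}\subseteq\textswab{B}^{L}$ and $T\in\textswab{A}^{L}_{sur}(E,F)$, then $TQ_{E}\in\textswab{A}^{L}(E^{sur},F)\subseteq\textswab{B}^{L}(E^{sur},F)$, whence $T\in\textswab{B}^{L}_{sur}(E,F)$. For the inflationary inclusion, one notes that $Q_{E}\in\mathfrak{L}(E^{sur},E)$, so $\bf (NOI_2)$ yields $TQ_{E}\in\textswab{A}^{L}(E^{sur},F)$ whenever $T\in\textswab{A}^{L}(E,F)$, i.e.\ $T\in\textswab{A}^{L}_{sur}(E,F)$.

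The substantive step is idempotence. Suppose $T\in Lip(E,F)$ belongs to $\left(\textswab{A}^{L}_{sur}\right)_{sur}$; unwinding the definition gives $TQ_{E}\in\textswab{A}^{L}_{sur}(E^{sur},F)$. The key observation is that the intermediate space $E^{sur}=\ell_{1}(B_{E})$ carries the metric lifting property, so the preceding lemma supplies $\textswab{A}^{L}(E^{sur},F)=\textswab{A}^{L}_{sur}(E^{sur},F)$. Therefore $TQ_{E}\in\textswab{A}^{L}(E^{sur},F)$, meaning $T\in\textswab{A}^{L}_{sur}(E,F)$. This proves $\left(\textswab{A}^{L}_{sur}\right)_{sur}\subseteq\textswab{A}^{L}_{sur}$, while the reverse inclusion is the inflationary step already established. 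The only non-routine point is recognizing that the lifting lemma applies at this intermediate stage, which it does precisely because $E^{sur}$ was constructed in the preliminaries to possess the metric lifting property; once this is noted, the argument is a direct mirror of the injective case and no further obstacles arise.
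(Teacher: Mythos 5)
Your argument reproduces the paper's proof exactly: the paper itself only says the result follows ``Similarly to Proposition~\ref{Auto 14} and from the preceding lemma,'' and the three steps you spell out (strong monotony as a tautology, the inflationary inclusion via $\bf (NOI_2)$ applied to $Q_{E}\in\mathfrak{L}(E^{sur},E)$, and idempotence by invoking the lifting-property lemma at the intermediate space $E^{sur}$) are precisely what that remark abbreviates. No gaps; this is the intended route.
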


Recall the definition of $\mathsf{A}$--factorable Lipschitz Operators in (\ref{Auto17}) and we assume here that $X=E$. It is evident $\textswab{A}^{L}:=Op^{L}(\mathsf{A})$ is a nonlinear operator.

\begin{prop}
Let $\mathsf{A}$ be a space ideal. Then $\left[Op^{L}(\mathsf{A})\right]_{sur}\subseteq Op^{L}(\mathsf{A^{sur}})$.
\end{prop}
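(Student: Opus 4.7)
The plan is to mirror the argument used in the preceding proposition for the injective hull, but replacing the \emph{closed linear span of the range} construction by its dual, the \emph{quotient by the kernel} construction.

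First, I unravel the hypothesis. If $T\in[Op^{L}(\mathsf{A})]_{sur}(E,F)$, then by definition $TQ_{E}\in Op^{L}(\mathsf{A})(E^{sur},F)$, so there exists a factorization $TQ_{E}=BA$ with $A\in Lip(E^{sur},\mathbf{M})$, $B\in\mathfrak{L}(\mathbf{M},F)$ and $\mathbf{M}\in\mathsf{A}$.

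Next, I put $\mathbf{M}_{0}:=\mathbf{M}/\ker B$ and let $\pi:\mathbf{M}\to\mathbf{M}_{0}$ be the canonical quotient map. Since $\ker B$ is closed and $\mathbf{M}_{0}$ is a metric quotient of $\mathbf{M}\in\mathsf{A}$, we have $\mathbf{M}_{0}\in\mathsf{A^{sur}}$. Moreover, $B$ induces a $\widetilde{B}\in\mathfrak{L}(\mathbf{M}_{0},F)$ with $B=\widetilde{B}\pi$.

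Now I define $A_{0}:E\to\mathbf{M}_{0}$ by $A_{0}(x):=\pi A(\xi)$ for any $\xi\in E^{sur}$ with $Q_{E}\xi=x$. This value is independent of the preimage $\xi$, because if $Q_{E}\xi=Q_{E}\xi'$ then $B(A\xi-A\xi')=TQ_{E}(\xi-\xi')=0$, so $A\xi-A\xi'\in\ker B=\ker\pi$. To see that $A_{0}$ is Lipschitz I exploit that $Q_{E}$ is a metric surjection: given $x,y\in E$ and $\varepsilon>0$, pick $\zeta\in E^{sur}$ with $Q_{E}\zeta=x-y$ and $\|\zeta\|\le(1+\varepsilon)\|x-y\|$, fix any $\xi_{y}$ with $Q_{E}\xi_{y}=y$, and set $\xi_{x}:=\xi_{y}+\zeta$. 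Then $\|A_{0}(x)-A_{0}(y)\|=\|\pi A\xi_{x}-\pi A\xi_{y}\|\le Lip(A)(1+\varepsilon)\|x-y\|$, and letting $\varepsilon\to 0$ yields $Lip(A_{0})\le Lip(A)$; note also $A_{0}(0)=\pi A(0)=0$.

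Finally, I check $T=\widetilde{B}A_{0}$: for $x\in E$ and any $\xi$ with $Q_{E}\xi=x$, $\widetilde{B}A_{0}(x)=\widetilde{B}\pi A\xi=BA\xi=TQ_{E}\xi=Tx$. This exhibits $T$ as an $\mathsf{A^{sur}}$--factorable Lipschitz operator, so $T\in Op^{L}(\mathsf{A^{sur}})$, establishing the claimed inclusion. The main obstacle is the well-definedness together with the Lipschitz continuity of $A_{0}$; both rest on the implication $BA=TQ_{E}\Rightarrow A(\ker Q_{E})\subseteq\ker B$ and on the metric-surjection lifting property of $Q_{E}$. The linearity of $B$ (hence of $\pi$) is essential, since Lipschitz maps on $E^{sur}$ do not in general descend to the quotient $E$ unless we first mod out by a suitable \emph{linear} subspace of $\mathbf{M}$.
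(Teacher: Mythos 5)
Your proof is correct, and I believe it is a cleaner and fully rigorous version of what the paper intends. The quotient construction $\mathbf{M}_0 := \mathbf{M}/\ker B$ is exactly the right dual of the closed-range construction $\mathbf{M}_0 := \overline{R_A}$ used for the injective hull, and you explicitly handle the two delicate points that any correct argument must address: (i) $A(\xi)-A(\xi')\in\ker B$ whenever $Q_E\xi=Q_E\xi'$ (since $B$ is linear, $B(A\xi-A\xi')=BA\xi-BA\xi'=T(Q_E\xi)-T(Q_E\xi')=0$), which makes the descended map $A_0$ well defined; and (ii) the metric surjectivity of $Q_E$ combined with $\|\pi\|\le 1$, which gives $Lip(A_0)\le Lip(A)$, so $A_0$ is genuinely a Lipschitz map into $\mathbf{M}_0$ with $A_0(0)=0$. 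By contrast, the paper's proof sets $\mathbf{M}_0:=\overline{D_B}$ (notation not defined in the text) and writes $A_0:=J_{\mathbf{M}_0}^{\mathbf{M}}AQ_E^{-1}$, which, read literally as a subspace construction with an inclusion $J_{\mathbf{M}_0}^{\mathbf{M}}$, would place $\mathbf{M}_0$ in $\mathsf{A}^{inj}$ rather than $\mathsf{A}^{sur}$, and never confronts the well-definedness of "$AQ_E^{-1}$" for a \emph{nonlinear} $A$. Your observation that one must first pass to the quotient of $\mathbf{M}$ by a \emph{linear} subspace precisely because $A$ is only Lipschitz is the key point, and your argument supplies it in full. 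One very minor stylistic remark: the line "$B(A\xi-A\xi')=TQ_E(\xi-\xi')=0$" is a slight abuse since $TQ_E$ is not linear; the cleaner justification is $BA\xi-BA\xi'=Tx-Tx=0$, which is what you clearly meant.
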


\begin{proof}
Let $T\in Lip(E,F)$ belong to $\left[Op^{L}(\mathsf{A})\right]_{sur}$. Then $T Q_{E}\in Op^{L}(\mathsf{A})(E^{sur}, F)$, so $T Q_{E}=BA$ such that $A\in Lip(E^{sur},\mathbf{M})$, $B\in\mathfrak{L}(\mathbf{M}, F)$, and $\mathbf{M}\in\mathsf{A}$. Put $\mathbf{M_{0}}:=\overline{D_{B}}$, and let $B_{0}\in\mathfrak{L}(\mathbf{M_{0}}, F)$ be the bounded linear operator induced by $B$. Obviously $A ((x_{j})_{j})\in D_{B}$ for $(x_{j})_{j}\in D_{Q_{E}}$. Consequently $A(D_{Q_{E}})\subseteq\mathbf{M_{0}}$, and $A_{0}:=J_{\mathbf{M_{0}}}^{\mathbf{M}} A Q_{E}^{-1}$  is well--defined. Finally, $T=B_{0} A_{0}$ and $\mathbf{M_{0}}\in\mathsf{A^{sur}}$ imply $T\in Op^{L}(\mathsf{A^{sur}})$. This proves $\left[Op^{L}(\mathsf{A})\right]_{sur}\subseteq  Op^{L}(\mathsf{A^{sur}})$. \\
\end{proof}

\begin{rem}
A nonlinear ideal $\textswab{A}^{L}$ is called surjective if $\textswab{A}^{L}=\textswab{A}^{L}_{sur}$.
\end{rem}

The surjectivity of a nonlinear ideal $\textswab{A}^{L}$ means that it does not depend on the size of the source space $E$ whether or not a Lipschitz operator $T\in Lip(E,F)$  belongs to $\textswab{A}^{L}$.

\begin{prop}\label{Auto23}
A nonlinear ideal $\textswab{A}^{L}$ is surjective if and only if for every surjection $Q\in\mathfrak{L}(E, E_{0})$ and for every Lipschitz operator $T_{0}\in Lip(E_{0}, F)$ it follows from $T_{0} Q\in\textswab{A}^{L}(E, F)$ that $T_{0}\in\textswab{A}^{L}(E_{0}, F)$.
\end{prop}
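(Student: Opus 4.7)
The plan is to mirror the argument of Proposition~\ref{Auto22}, exchanging the injective envelope for the surjective one and the extension property for the lifting property of $E_{0}^{sur}=\ell_{1}(B_{E_{0}})$.

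For the necessity, I would assume $\textswab{A}^{L}$ is surjective, fix a surjection $Q\in\mathfrak{L}(E,E_{0})$ and a map $T_{0}\in Lip(E_{0},F)$ with $T_{0}Q\in\textswab{A}^{L}(E,F)$. Because $E_{0}^{sur}$ possesses the metric lifting property and $Q$ is a (metric) surjection, the canonical metric surjection $Q_{E_{0}}:E_{0}^{sur}\to E_{0}$ lifts through $Q$: there exists $B\in\mathfrak{L}(E_{0}^{sur},E)$ with $QB=Q_{E_{0}}$. Then $T_{0}Q_{E_{0}}=(T_{0}Q)\circ B$, and the composition axiom $\bf (NOI_{2})$ applied to $T_{0}Q\in\textswab{A}^{L}(E,F)$ and $B\in\mathfrak{L}(E_{0}^{sur},E)$ yields $T_{0}Q_{E_{0}}\in\textswab{A}^{L}(E_{0}^{sur},F)$. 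By definition this means $T_{0}\in\textswab{A}^{L}_{sur}(E_{0},F)$, and the assumed surjectivity of $\textswab{A}^{L}$ delivers $T_{0}\in\textswab{A}^{L}(E_{0},F)$.

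For the sufficiency, I would suppose the stated condition and take $T_{0}\in\textswab{A}^{L}_{sur}(E_{0},F)$. Then $T_{0}Q_{E_{0}}\in\textswab{A}^{L}(E_{0}^{sur},F)$ directly from the definition of the surjective hull. Since $Q_{E_{0}}$ is itself a metric surjection, applying the hypothesis to $E:=E_{0}^{sur}$ and $Q:=Q_{E_{0}}$ gives $T_{0}\in\textswab{A}^{L}(E_{0},F)$. This establishes $\textswab{A}^{L}_{sur}\subseteq\textswab{A}^{L}$. The reverse inclusion $\textswab{A}^{L}\subseteq\textswab{A}^{L}_{sur}$ is the easy direction: if $T\in\textswab{A}^{L}(E,F)$ then $TQ_{E}=T\circ Q_{E}\in\textswab{A}^{L}(E^{sur},F)$ by $\bf (NOI_{2})$, so $T\in\textswab{A}^{L}_{sur}(E,F)$.

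The single nontrivial step is producing the lift $B$ with $QB=Q_{E_{0}}$; this is the standard invocation of the metric lifting property of $\ell_{1}(B_{E_{0}})$ and is the categorical dual of the extension-property step used in Proposition~\ref{Auto22}. All remaining manipulations are formal consequences of the ideal axioms $\bf (NOI_{1})$--$\bf (NOI_{2})$, and no additional estimates are needed.
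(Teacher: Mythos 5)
Your proof is correct and follows essentially the same route as the paper: both directions hinge on lifting the canonical surjection $Q_{E_{0}}:E_{0}^{sur}\to E_{0}$ through the given $Q$ via the lifting property of $\ell_{1}(B_{E_{0}})$, followed by the composition axiom. The only cosmetic difference is that you also spell out the trivial inclusion $\textswab{A}^{L}\subseteq\textswab{A}^{L}_{sur}$, which the paper leaves implicit.
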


\begin{proof}
To check the necessity we consider a surjective nonlinear ideal $\textswab{A}^{L}$. Let $T_{0}\in Lip(E_{0}, F)$ such that $T_{0} Q\in\textswab{A}^{L}(E, F)$. Since $E_{0}^{sur}$ has the lifting property, there is $B\in\mathfrak{L}(E_{0}^{sur}, E)$ with $Q_{E_{0}}=Q B$. Hence it follows from $T_{0} Q_{E_{0}} =\left(T_{0} Q\right) B\in\textswab{A}^{L}(E_{0}^{sur}, F)$ that $T_{0}\in\textswab{A}^{L}_{sur}(E_{0}, F)=\textswab{A}^{L}(E_{0}, F)$. 

Conversely, let us suppose that the given condition is satisfied. If $T_{0}\in\textswab{A}^{L}_{sur}(E_{0}, F)$, then $T_{0} Q_{E_{0}} \in\textswab{A}^{L}(E_{0}^{sur}, F)$. Since $Q_{E_{0}}$ is a surjection, we obtain $T_{0}\in\textswab{A}^{L}(E_{0}, F)$. Therefore $\textswab{A}^{L}=\textswab{A}^{L}_{sur}$, which proves the sufficiency. \\
\end{proof}

Recall the definitions of symmetric and injective nonlinear ideal in (\ref{Auto19}) and (\ref{Auto20}), respectively. We assume here that $X=E$. It is evident $\mathfrak{A}^{L}_{dual}$ and $\textswab{A}^{L}_{inj}$ is a nonlinear ideal.

\begin{lem}
 $\left(\mathfrak{A}_{dual}^{L}\right)_{inj}\subseteq\left(\mathfrak{A}_{sur}\right)^{L}_{dual}$.
\end{lem}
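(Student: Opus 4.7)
The plan is to unwind both sides of the claimed inclusion. Suppose $T\in Lip(E,F)$ belongs to $\bigl(\mathfrak{A}^{L}_{dual}\bigr)_{inj}$. By definition of the injective hull, $J_{F}T\in\mathfrak{A}^{L}_{dual}(E,F^{inj})$, and then by definition of the dual nonlinear ideal
\[
(J_{F}T)^{\#}_{|(F^{inj})^{*}}\;\in\;\mathfrak{A}\bigl((F^{inj})^{*},E^{\#}\bigr).
\]
A short duality computation of exactly the type performed in Lemma~\ref{Auto16} identifies this Lipschitz dual as
\[
(J_{F}T)^{\#}_{|(F^{inj})^{*}}\;=\;T^{\#}_{|F^{*}}\circ (J_{F})^{*},
\]
so the hypothesis becomes $T^{\#}_{|F^{*}}\circ(J_{F})^{*}\in\mathfrak{A}\bigl((F^{inj})^{*},E^{\#}\bigr)$.

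Next I would pin down the linear-algebraic relationship between $(J_{F})^{*}$ and $Q_{F^{*}}$. Recall that $F^{inj}=\ell_{\infty}(B_{F^{*}})$ and $(F^{*})^{sur}=\ell_{1}(B_{F^{*}})$, and let $\iota\in\mathfrak{L}((F^{*})^{sur},(F^{inj})^{*})$ denote the canonical embedding $\ell_{1}(B_{F^{*}})\hookrightarrow\ell_{\infty}(B_{F^{*}})^{*}$. For any $\xi\in(F^{*})^{sur}$ and $e\in F$,
\[
\bigl\langle (J_{F})^{*}(\iota\xi),e\bigr\rangle\;=\;\langle\iota\xi,J_{F}e\rangle\;=\;\sum_{e^{*}\in B_{F^{*}}}\xi_{e^{*}}\langle e,e^{*}\rangle\;=\;\bigl\langle e,Q_{F^{*}}\xi\bigr\rangle,
\]
so $(J_{F})^{*}\circ\iota=Q_{F^{*}}$ as bounded linear operators $(F^{*})^{sur}\to F^{*}$.

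Combining the two steps, the composition axiom of the linear operator ideal $\mathfrak{A}$ applied with $\iota$ on the right gives
\[
T^{\#}_{|F^{*}}\circ Q_{F^{*}}\;=\;\bigl(T^{\#}_{|F^{*}}\circ(J_{F})^{*}\bigr)\circ\iota\;\in\;\mathfrak{A}\bigl((F^{*})^{sur},E^{\#}\bigr),
\]
which is exactly the assertion $T^{\#}_{|F^{*}}\in\mathfrak{A}_{sur}(F^{*},E^{\#})$. Translating through the definition of the dual nonlinear ideal of $\mathfrak{A}_{sur}$ yields $T\in(\mathfrak{A}_{sur})^{L}_{dual}(E,F)$, completing the inclusion.

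The main obstacle I anticipate is stating and verifying the identity $(J_{F})^{*}\circ\iota=Q_{F^{*}}$ cleanly, since it hinges on correctly dualising the universal embedding $J_{F}$ under the two successive identifications $F^{inj}=\ell_{\infty}(B_{F^{*}})$ and $(F^{*})^{sur}=\ell_{1}(B_{F^{*}})$; everything else is routine definition-chasing together with a single appeal to the operator-ideal property of $\mathfrak{A}$ under right-composition with a bounded linear map.
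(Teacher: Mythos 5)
Your proof is correct, and it takes a genuinely different (more elementary and self-contained) route than the paper. Both arguments begin identically: from $T\in\left(\mathfrak{A}^{L}_{dual}\right)_{inj}$ one uses Lemma~\ref{Auto16} to identify $(J_{F}T)^{\#}_{|(F^{inj})^{*}}=T^{\#}_{|F^{*}}\circ (J_{F})^{*}$, so that $T^{\#}_{|F^{*}}\circ (J_{F})^{*}\in\mathfrak{A}\left((F^{inj})^{*},E^{\#}\right)$. At this point the paper simply observes that $(J_{F})^{*}$ is a metric surjection and appeals to the abstract characterisation of the surjective hull in Pietsch \cite[Sec.~4.7.9]{P78} (if $SQ\in\mathfrak{A}$ for some surjection $Q$, then $S\in\mathfrak{A}_{sur}$) to conclude $T^{\#}_{|F^{*}}\in\mathfrak{A}_{sur}(F^{*},E^{\#})$. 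You instead proceed from the raw definition of $\mathfrak{A}_{sur}$: you exhibit the canonical embedding $\iota:(F^{*})^{sur}=\ell_{1}(B_{F^{*}})\hookrightarrow\ell_{\infty}(B_{F^{*}})^{*}=(F^{inj})^{*}$, verify by a direct duality computation that $(J_{F})^{*}\circ\iota=Q_{F^{*}}$, and then invoke only the composition axiom of the linear operator ideal to get $T^{\#}_{|F^{*}}\circ Q_{F^{*}}\in\mathfrak{A}\left((F^{*})^{sur},E^{\#}\right)$, which is literally the statement $T^{\#}_{|F^{*}}\in\mathfrak{A}_{sur}(F^{*},E^{\#})$. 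The paper's route is shorter but relies on the external citation; yours is fully self-contained and makes the mechanism transparent, at the cost of carrying out the explicit identification of $(J_{F})^{*}\circ\iota$. Incidentally, the paper's proof has evident typos (it writes $T^{\#}_{|(F^{inj})^{*}}$ where it should be $T^{\#}_{|F^{*}}$, and $\mathfrak{A}^{L}_{sur}$ where it should be the linear hull $\mathfrak{A}_{sur}$); your write-up implicitly corrects both.
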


\begin{proof}
Let $T\in Lip(E,F)$ belong to $\left(\mathfrak{A}_{dual}^{L}\right)_{inj}$. Then $J_{F} T\in\mathfrak{A}_{dual}^{L}\left(E, F^{inj}\right)$ and $T^{\#}_{|_{\left(F^{inj}\right)^{*}}} J_{F}^{*}\in\mathfrak{A}\left({\left(F^{inj}\right)^{*}}, E^{\#}\right)$. Since $J_{F}^{*}$ is a surjection, it follows from \cite [Sec. 4.7.9] {P78} that $T^{\#}_{|_{\left(F^{inj}\right)^{*}}}\in\mathfrak{A}^{L}_{sur}$. Therefore $T\in\left(\mathfrak{A}_{sur}\right)^{L}_{dual}$. \\ 
\end{proof}

\begin{lem}
 For every surjective ideal the Lipschitz dual ideal is injective.
\end{lem}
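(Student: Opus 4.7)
The plan is to combine the preceding lemma $\left(\mathfrak{A}^{L}_{dual}\right)_{inj}\subseteq\left(\mathfrak{A}_{sur}\right)^{L}_{dual}$ with the hypothesis that $\mathfrak{A}$ is surjective, which reduces the right--hand side to $\mathfrak{A}^{L}_{dual}$ itself. Concretely, I would argue that one inclusion $\mathfrak{A}^{L}_{dual}\subseteq\left(\mathfrak{A}^{L}_{dual}\right)_{inj}$ is free, being a consequence of the fact established in Proposition \ref{Auto 14} that $inj$ is a hull Lipschitz procedure. The converse inclusion $\left(\mathfrak{A}^{L}_{dual}\right)_{inj}\subseteq\mathfrak{A}^{L}_{dual}$ is exactly where the hypothesis on $\mathfrak{A}$ is used: applying the previous lemma we get $\left(\mathfrak{A}^{L}_{dual}\right)_{inj}\subseteq\left(\mathfrak{A}_{sur}\right)^{L}_{dual}$, and surjectivity of $\mathfrak{A}$ means $\mathfrak{A}_{sur}=\mathfrak{A}$, so $\left(\mathfrak{A}_{sur}\right)^{L}_{dual}=\mathfrak{A}^{L}_{dual}$.

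Putting the two inclusions together yields the equality $\mathfrak{A}^{L}_{dual}=\left(\mathfrak{A}^{L}_{dual}\right)_{inj}$, which by the definition immediately preceding the statement is exactly the assertion that $\mathfrak{A}^{L}_{dual}$ is injective.

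There is essentially no obstacle here, as all the heavy lifting has already been done: the nontrivial content resides in the preceding lemma, which transfers an injective representation of a Lipschitz operator to a surjective property of its Lipschitz dual via the duality between $J_{F}$ and the quotient $J_{F}^{*}$ together with the characterization of surjective ideals in \cite[Sec.~4.7.9]{P78}. The only thing to be careful about in writing up the proof is to cite Proposition \ref{Auto 14} for the inclusion $\mathfrak{A}^{L}_{dual}\subseteq\left(\mathfrak{A}^{L}_{dual}\right)_{inj}$ and the preceding lemma for the reverse inclusion, and then explicitly invoke the surjectivity hypothesis $\mathfrak{A}=\mathfrak{A}_{sur}$.
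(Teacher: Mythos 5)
Your proof is correct and takes essentially the same approach as the paper: both reduce the claim to the preceding lemma $\left(\mathfrak{A}^{L}_{dual}\right)_{inj}\subseteq\left(\mathfrak{A}_{sur}\right)^{L}_{dual}$ together with the hypothesis $\mathfrak{A}=\mathfrak{A}_{sur}$. The paper states only the resulting inclusion $\left(\mathfrak{A}^{L}_{dual}\right)_{inj}\subseteq\mathfrak{A}^{L}_{dual}$ and leaves the reverse inclusion from the hull property implicit, whereas you spell it out by citing Proposition \ref{Auto 14}; the substance is identical.
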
 

\begin{proof}
It follows from $\mathfrak{A}=\mathfrak{A}_{sur}$ that $\mathfrak{A}_{dual}^{L}=\left(\mathfrak{A}_{sur}\right)_{dual}^{L}\supseteq\left(\mathfrak{A}_{dual}^{L}\right)_{inj}$.  \\ 
\end{proof}

\begin{OP}
Every Banach space  $E_{sur}$  possesses the approximation property. Is it true that $\textswab{G}^{L}_{sur}=\textswab{R}^{L} ?$
\end{OP}
\subsection{Minimal Nonlinear Ideals}
Let $\mathfrak{A}$ be an ideal. A Lipschitz operator $T\in Lip(X,F)$ belongs to the minimal $\mathfrak{A}^{L}_{min}$ if $T=BT_{0}A$, where $B\in\mathfrak{G}(F_{0}, F)$, $T_{0}\in\mathfrak{A}(G_{0}, F_{0})$, and $A\in\textfrak{G}^{L}(X, G_{0})$. In the other words $\mathfrak{A}^{L}_{min}:=\mathfrak{G}\circ\mathfrak{A}\circ\textfrak{G}^{L}$.

\begin{prop}
$\mathfrak{A}^{L}_{min}$ is a nonlinear ideal. 
\end{prop}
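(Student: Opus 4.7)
The plan is to reuse Proposition \ref{Auto8}, which guarantees that the product $\mathfrak{A}\circ\textfrak{A}^{L}$ of a linear operator ideal with a nonlinear ideal is again a nonlinear ideal. The composite $\mathfrak{G}\circ\mathfrak{A}$ of two linear operator ideals is itself a linear operator ideal—a routine verification of Pietsch's axioms $\bf (OI_0)$--$\bf (OI_2)$ from those for $\mathfrak{G}$ and $\mathfrak{A}$. Therefore, by associativity of operator composition,
$$\mathfrak{A}^{L}_{min} \;=\; \mathfrak{G}\circ\mathfrak{A}\circ\textfrak{G}^{L} \;=\; (\mathfrak{G}\circ\mathfrak{A})\circ\textfrak{G}^{L}$$
is displayed as a product of the linear ideal $\mathfrak{G}\circ\mathfrak{A}$ with the nonlinear ideal $\textfrak{G}^{L}$, and Proposition \ref{Auto8} yields the conclusion in one stroke.

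If one prefers a self-contained verification mirroring the proof of Proposition \ref{Auto8}, each axiom unfolds routinely. For $\bf (\widetilde{NOI_0})$ one writes
$$g\boxtimes e : X \xrightarrow{\,g\boxtimes 1\,} \mathbb{K} \xrightarrow{\,\mathrm{id}_{\mathbb{K}}\,} \mathbb{K} \xrightarrow{\,1\otimes e\,} F,$$
observing that $g\boxtimes 1\in\textfrak{F}^{L}\subseteq\textfrak{G}^{L}$, $\mathrm{id}_{\mathbb{K}}\in\mathfrak{A}(\mathbb{K},\mathbb{K})$ by $\bf (OI_0)$, and $1\otimes e\in\mathfrak{G}(\mathbb{K},F)$ as a finite-rank operator. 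For $\bf (\widetilde{NOI_1})$, given $T_{i}=B_{i}T_{0,i}A_{i}$ ($i=1,2$) with $A_{i}\in\textfrak{G}^{L}(X,G_{i})$, $T_{0,i}\in\mathfrak{A}(G_{i},F_{0,i})$, $B_{i}\in\mathfrak{G}(F_{0,i},F)$, one factors $T_{1}+T_{2}$ through the Cartesian products $G_{1}\times G_{2}$ and $F_{0,1}\times F_{0,2}$ by setting
$$A := J_{1}A_{1}+J_{2}A_{2}, \qquad T_{0} := J_{1}T_{0,1}Q_{1}+J_{2}T_{0,2}Q_{2}, \qquad B := B_{1}Q_{1}+B_{2}Q_{2},$$
each of which belongs to the required class by the ideal axioms of $\textfrak{G}^{L}$, $\mathfrak{A}$, and $\mathfrak{G}$ respectively. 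For $\bf (\widetilde{NOI_2})$, given $A\in Lip(X_{0},X)$, $T=BT_{0}A'\in\mathfrak{A}^{L}_{min}(X,F)$, and $B_{0}\in\mathfrak{L}(F,F_{0})$, one rewrites $B_{0}TA=(B_{0}B)T_{0}(A'A)$, where $A'A\in\textfrak{G}^{L}(X_{0},G_{0})$ by $\bf (\widetilde{NOI_2})$ for $\textfrak{G}^{L}$ and $B_{0}B\in\mathfrak{G}(F_{0,1},F_{0})$ by $\bf (OI_2)$ for $\mathfrak{G}$, while the middle factor $T_{0}\in\mathfrak{A}$ is unchanged.

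No genuine obstacle arises; the proof is pure bookkeeping along the template of Proposition \ref{Auto8}. The only mild point worth noting is that $\mathfrak{G}$ and $\textfrak{G}^{L}$ are defined as operator- and Lipschitz-norm closures of finite-rank elements, but one never needs to manipulate the approximating sequences explicitly: the ideal axioms these classes satisfy are what the argument consumes.
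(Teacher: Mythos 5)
Your first argument is correct and takes a genuinely more economical route than the paper. You notice that $\mathfrak{A}^{L}_{min}=\mathfrak{G}\circ\mathfrak{A}\circ\textfrak{G}^{L}$ can be regrouped as $(\mathfrak{G}\circ\mathfrak{A})\circ\textfrak{G}^{L}$, that $\mathfrak{G}\circ\mathfrak{A}$ is again a linear operator ideal (a standard consequence of Pietsch's axioms, implicit in \cite{P78}), and that $\textfrak{G}^{L}$ has already been shown to be a nonlinear ideal — so Proposition~\ref{Auto8} applies directly. This reduction is legitimate: unwinding the definition shows that $T\in(\mathfrak{G}\circ\mathfrak{A})\circ\textfrak{G}^{L}(X,F)$ holds exactly when $T=BT_{0}A$ with $B\in\mathfrak{G}$, $T_{0}\in\mathfrak{A}$, $A\in\textfrak{G}^{L}$, which is precisely the definition of $\mathfrak{A}^{L}_{min}$. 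What this buys you is that the three axiom checks are inherited from a single already-proved proposition rather than being rewritten.

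The paper instead verifies $\bf(\widetilde{NOI_0})$--$\bf(\widetilde{NOI_2})$ from scratch: it factors $g\boxtimes e$ through $\mathbb{K}\to\mathbb{K}$, builds the sum $T_{1}+T_{2}$ through Cartesian products on both the middle-source and middle-target (using $J_{i},Q_{i}$ on one product and $\tilde{J}_{i},\tilde{Q}_{i}$ on the other), and composes on both ends for $\bf(\widetilde{NOI_2})$. Your second, ``self-contained'' verification is essentially a restatement of the paper's argument, with two harmless notational compressions: you write $\mathrm{id}_{\mathbb{K}}$ where the paper writes $1\otimes 1$ (they are the same operator on $\mathbb{K}$), and you reuse the symbols $J_{i},Q_{i}$ for the injections/projections of both $G_{1}\times G_{2}$ and $F_{0,1}\times F_{0,2}$ where the paper distinguishes them as $\tilde{J}_{i},\tilde{Q}_{i}$ for one of the products. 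Neither affects correctness. In short: both of your arguments are valid; the first is the cleaner contribution, and the second matches the paper.
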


\begin{proof}
The condition $\bf (\widetilde{NOI_0})$ satisfied. Since the elementary Lipschitz tensor $g\boxtimes e$ admits a factorization $$g\boxtimes e : X\stackrel{g\boxtimes 1}{\longrightarrow} \mathbb{K}\stackrel{1\otimes 1}{\longrightarrow}\mathbb{K}\stackrel{1\otimes e}{\longrightarrow} F,$$
where $1\otimes e\in\mathfrak{G}\left(\mathbb{K}, F\right)$, $1\otimes 1\in\mathfrak{A}\left(\mathbb{K}, \mathbb{K}\right)$, and $g\boxtimes 1\in\textfrak{G}^{L}\left(X, \mathbb{K}\right)$. To prove the condition $\bf (\widetilde{NOI_1})$, let $T_{i}\in\mathfrak{G}\circ\mathfrak{A}\circ\textfrak{G}^{L}(X, F)$. Then $T_{i}=B_{i}T_{0}^{i} A_{i}$, where $B_{i}\in\mathfrak{G}(F_{0}^{i}, F)$, $T_{0}^{i}\in\mathfrak{A}(G_{0}^{i}, F_{0}^{i})$, and $A_{i}\in\textfrak{G}^{L}(X, G_{0}^{i})$. Put $B:=B_{1}\circ Q_{1} + B_{2}\circ Q_{2}$, $T_{0}:=\tilde{J}_{1}\circ T_{0}^{1}\circ \tilde{Q}_{1} + \tilde{J}_{2}\circ T_{0}^{2}\circ \tilde{Q}_{2} $, and $A:=J_{1}\circ A_{1} + J_{2}\circ A_{2}$. Now $T_{1} + T_{2}= B\circ T_{0}\circ A$, $B\in\mathfrak{G}(F_{0}, F)$, $T_{0}\in\mathfrak{A}(G_{0}, F_{0})$, and $A\in\textfrak{G}^{L}(X, G_{0})$ imply $T_{1} + T_{2}\in\mathfrak{G}\circ\mathfrak{A}\circ\textfrak{G}^{L}(X, F)$. 

Let $A\in Lip(X_{0},X)$, $T\in\mathfrak{G}\circ\mathfrak{A}\circ\textfrak{G}^{L}(X, F)$, and $B\in\mathfrak{L}(F,R_{0})$. Then $T$ admits a factorization 
$$T: X\stackrel{\widetilde{A}}{\longrightarrow} G_{0}\stackrel{T_{0}}{\longrightarrow} F_{0}\stackrel{\widetilde{B}}{\longrightarrow} F,$$
where $\widetilde{B}\in\mathfrak{G}(F_{0}, F)$, $T_{0}\in\mathfrak{A}(G_{0}, F_{0})$, and $\widetilde{A}\in\textfrak{G}^{L}(X, G_{0})$. To show that $BTA\in\mathfrak{G}\circ\mathfrak{A}\circ\textfrak{G}^{L}(X_{0},R_{0})$. By using the non--linear composition ideal properties, we obtain $B\circ\widetilde{B}\in\mathfrak{G}\left(F_{0}, R_{0}\right)$ and $\widetilde{A}\circ A\in\textfrak{G}^{L}\left(X_{0}, G_{0}\right)$. Hence the Lipschitz operator $BTA$ admits a factorization 
$$BTA: X_{0}\stackrel{\widetilde{\widetilde{A\,}}}{\longrightarrow} G_{0}\stackrel{T_{0}}{\longrightarrow} F_{0}\stackrel{\widetilde{\widetilde{B\,}}}{\longrightarrow} R_{0},$$
where $\widetilde{\widetilde{B\,}}=B\circ\widetilde{B}$ and $\widetilde{\widetilde{A\,}}=\widetilde{A}\circ A$, hence the condition $\bf (\widetilde{NOI_2})$ satisfied. \\
\end{proof}

\begin{prop}
The rule $$min: \mathfrak{A}\longrightarrow\mathfrak{A}^{L}_{min}$$
is a monotone Lipschitz procedure. 
\end{prop}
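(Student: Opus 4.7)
The plan is to verify the two items implicit in the statement: first, that $\min$ qualifies as a Lipschitz procedure (i.e.\ that $\mathfrak{A}^{L}_{\min}$ is a nonlinear ideal for every ideal $\mathfrak{A}$), and second, that it satisfies property $\bf (M')$.

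For the first part, I would simply appeal to the preceding proposition, which already establishes that $\mathfrak{A}^{L}_{\min}=\mathfrak{G}\circ\mathfrak{A}\circ\textfrak{G}^{L}$ is a nonlinear ideal. Thus the rule $\mathfrak{A}\mapsto\mathfrak{A}^{L}_{\min}$ genuinely assigns a nonlinear ideal to every (linear) operator ideal $\mathfrak{A}$, which is exactly the semi--Lipschitz procedure framework introduced above. In the terminology of the \textbf{Lipschitz Procedures} subsection, $\min$ is therefore a (semi--) Lipschitz procedure.

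For the second part, I would argue as follows. Suppose $\mathfrak{A}\subseteq\mathfrak{B}$ and let $T\in\mathfrak{A}^{L}_{\min}(X,F)$. By definition, $T$ admits a factorization
$$T: X\stackrel{A}{\longrightarrow} G_{0}\stackrel{T_{0}}{\longrightarrow} F_{0}\stackrel{B}{\longrightarrow} F,$$
with $B\in\mathfrak{G}(F_{0},F)$, $T_{0}\in\mathfrak{A}(G_{0},F_{0})$ and $A\in\textfrak{G}^{L}(X,G_{0})$. From the assumption $\mathfrak{A}\subseteq\mathfrak{B}$ we immediately get $T_{0}\in\mathfrak{B}(G_{0},F_{0})$, while $A$ and $B$ are kept unchanged. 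Hence the very same factorization places $T$ in $\mathfrak{G}\circ\mathfrak{B}\circ\textfrak{G}^{L}=\mathfrak{B}^{L}_{\min}$, i.e. $\mathfrak{A}^{L}_{\min}(X,F)\subseteq\mathfrak{B}^{L}_{\min}(X,F)$ for every pointed metric space $X$ and Banach space $F$. This gives $\mathfrak{A}^{L}_{\min}\subseteq\mathfrak{B}^{L}_{\min}$, which is property $\bf (M')$.

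The argument is essentially a one--line transport of the middle factor through the inclusion $\mathfrak{A}\subseteq\mathfrak{B}$; there is no real obstacle since the outer factors $\mathfrak{G}$ and $\textfrak{G}^{L}$ are fixed and do not interact with the change from $\mathfrak{A}$ to $\mathfrak{B}$. The only point worth emphasizing in the write-up is that one uses the existing factorization without modification, which confirms monotony without invoking stronger hypotheses (such as idempotence or strong monotony in the sense of $\bf (M)$), consistent with the statement that $\min$ is merely monotone rather than a hull Lipschitz procedure.
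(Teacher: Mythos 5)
The paper states this proposition without any proof at all (it is evidently regarded as immediate, much like the analogous statement for the \textit{dual} rule). Your argument is correct and is precisely the proof the author would have given: the ideal property of $\mathfrak{A}^{L}_{\min}$ comes from the preceding proposition, and property $\bf (M')$ follows by transporting the middle factor $T_{0}$ through $\mathfrak{A}\subseteq\mathfrak{B}$ while leaving the outer $\mathfrak{G}$ and $\textfrak{G}^{L}$ factors untouched. One small terminological nuance worth flagging: since $\min$ takes a \emph{linear} ideal $\mathfrak{A}$ to a nonlinear ideal, in the paper's own nomenclature it is a \textbf{semi--Lipschitz procedure}, and the relevant monotony axiom is indeed $\bf (M')$ (over linear ideals) rather than $\bf (M)$ (over nonlinear ideals) --- you correctly distinguish these, and your closing observation that $\min$ is merely monotone and not a hull procedure (since $\mathfrak{A}^{L}_{\min}\subseteq\textfrak{G}^{L}$ is a proper restriction and $\mathfrak{A}\not\subseteq\mathfrak{A}^{L}_{\min}$ does not even typecheck) is a sensible remark.
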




\begin{rem}
\begin{itemize}
  \item It is evident $\mathfrak{A}^{L}_{min}\subseteq\textfrak{G}^{L}$.
	\item If $\textfrak{A}^{L}$ is a closed nonlinear ideal, then $\mathfrak{A}^{L}_{min}\subseteq\textfrak{A}^{L}$.
	\item A nonlinear ideal $\textfrak{A}^{L}$ is called minimal if $\mathfrak{A}^{L}_{min}\subseteq\textfrak{A}^{L}$.
\end{itemize}
\end{rem}

\begin{lem}
$\mathfrak{F}^{L}$ is a minimal nonlinear ideal. 
\end{lem}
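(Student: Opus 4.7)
The plan is to unwind the definition $\mathfrak{F}^{L}_{min} := \mathfrak{G}\circ\mathfrak{F}\circ\textfrak{G}^{L}$ and verify directly that any operator obtained as such a triple product is already Lipschitz finite rank, i.e. lies in $\mathfrak{F}^{L}$. Concretely, I would take an arbitrary $T\in\mathfrak{F}^{L}_{min}(X,F)$, factor it as $T = B\,T_{0}\,A$ with $B\in\mathfrak{G}(F_{0},F)$, $T_{0}\in\mathfrak{F}(G_{0},F_{0})$, and $A\in\textfrak{G}^{L}(X,G_{0})$, and then exhibit $T$ as a finite sum of elementary Lipschitz tensors $g_{j}\boxtimes e_{j}$.

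The key step is the following observation. Because $T_{0}$ has finite linear rank, it admits a representation $T_{0}=\sum_{j=1}^{m} h_{j}\otimes f_{j}$ with $h_{j}\in G_{0}^{*}$ and $f_{j}\in F_{0}$. Since $A\in Lip(X,G_{0})$ sends the base point to $0$ and each $h_{j}$ is linear continuous with $h_{j}(0)=0$, the composition $h_{j}\circ A$ is a real-valued Lipschitz function on $X$ vanishing at $0$, so $h_{j}\circ A\in X^{\#}$. Evaluating the factorization pointwise I get
\begin{equation}\nonumber
(BT_{0}A)(x)=B\!\left(\sum_{j=1}^{m} h_{j}(A(x))\,f_{j}\right)=\sum_{j=1}^{m}(h_{j}\circ A)(x)\cdot B(f_{j}),
\end{equation}
which is exactly the action of $\sum_{j=1}^{m}(h_{j}\circ A)\boxtimes B(f_{j})$. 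By the definition of $\mathfrak{F}^{L}$ recorded in formula \eqref{Auto3}, this displays $T$ as a Lipschitz finite rank operator from $X$ to $F$.

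Since $T\in\mathfrak{F}^{L}_{min}(X,F)$ was arbitrary, this gives $\mathfrak{F}^{L}_{min}\subseteq\mathfrak{F}^{L}$, which is precisely the criterion for $\mathfrak{F}^{L}$ to be a minimal nonlinear ideal, as stated in the preceding remark. I do not expect any genuine obstacle: the approximability hypotheses on $B\in\mathfrak{G}$ and $A\in\textfrak{G}^{L}$ are actually not used at all, because boundedness (resp. Lipschitzness) is enough to preserve the finite-rank property when composing on the left (resp. right) with a finite rank linear operator. The only minor care needed is to confirm that $h_{j}\circ A$ really belongs to $X^{\#}$, which follows from $Lip(h_{j}\circ A)\le\|h_{j}\|\cdot Lip(A)$ and the base-point condition, and that $B(f_{j})\in F$, which is immediate.
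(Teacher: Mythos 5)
Your proof is correct, and it actually repairs a genuine gap in the paper's own argument. The paper's proof is a one-liner: ``Since $\mathfrak{F}^{L}$ is closed we obtain $\mathfrak{F}^{L}_{min}\subseteq\textfrak{F}^{L}$,'' i.e.\ it invokes the earlier remark that a closed nonlinear ideal $\textfrak{A}^{L}$ satisfies $\mathfrak{A}^{L}_{min}\subseteq\textfrak{A}^{L}$. But $\textfrak{F}^{L}$ is \emph{not} closed: by the paper's own definitions $\textfrak{F}^{L}_{clos}=\textfrak{G}^{L}$, the Lipschitz approximable operators, and in general $\textfrak{F}^{L}\subsetneq\textfrak{G}^{L}$ (otherwise every Banach space would have the approximation property). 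So the criterion the paper appeals to simply does not apply here, and as written that proof is broken.

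Your argument avoids this entirely by unwinding the definition. Given $T=BT_{0}A$ with $B\in\mathfrak{G}(F_{0},F)$, $T_{0}=\sum_{j=1}^{m}h_{j}\otimes f_{j}\in\mathfrak{F}(G_{0},F_{0})$, and $A\in\textfrak{G}^{L}(X,G_{0})$, the pointwise computation
\[
(BT_{0}A)(x)=\sum_{j=1}^{m}(h_{j}\circ A)(x)\cdot Bf_{j}
\]
together with $h_{j}\circ A\in X^{\#}$ (Lipschitz, vanishing at the base point) exhibits $T=\sum_{j}(h_{j}\circ A)\boxtimes Bf_{j}$ as Lipschitz finite rank, giving $\mathfrak{F}^{L}_{min}\subseteq\mathfrak{F}^{L}$ directly. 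Your observation that the approximability hypotheses on $B$ and $A$ are never used is exactly right and is the reason this works: composing a finite-rank linear operator with any bounded operator on the left and any Lipschitz map on the right already lands in $\mathfrak{F}^{L}$, so closedness is irrelevant. This is the argument the paper should have given.
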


\begin{proof}
Since $\mathfrak{F}^{L}$ is closed we obtain $\mathfrak{F}^{L}_{min}\subseteq\textfrak{F}^{L}$.
\end{proof}
\section{Lipschitz p--Normed Nonlinear Ideals}

Let $\textfrak{A}^{L}$ be a nonlinear ideal. A map $\mathbf{A}^{L}$ from $\textfrak{A}^{L}$ to $\mathbb{R}^{+}$ is called a \textbf{Lipschitz $p-$norm} $\left(0<p\leq 1\right)$ if the following conditions are satisfied:

\begin{description}
	\item[$\bf (\widetilde{QNOI_0})$] $\mathbf{A}^{L}\left(g\boxtimes e\right)=Lip(g)\cdot\left\|e\right\|$ for $g\in X^{\#}$ and $e\in F$.
	\item[$\bf (\widetilde{QNOI_1})$] The $p-$triangle inequality holds:
	$$\mathbf{A}^{L}\left(S + T\right)^{p}\leq\mathbf{A}^{L}(S)^{p} + \mathbf{A}^{L}(T)^{p} \ for \  S,\: T\in\textfrak{A}^{L}(X,F).$$
	\item[$\bf (\widetilde{QNOI_2})$] $\mathbf{A}^{L}\left(BTA\right)\leq\left\|B\right\|\mathbf{A}^{L}(T)\: Lip(A)$ for $A\in Lip(X_{0},X)$, $T\in\textfrak{A}^{L}(X,F)$, and $B\in\mathfrak{L}(F,F_{0})$.
\end{description}

\begin{rem}
\begin{itemize}
	\item A \textbf{Lipschitz $p$--Banach nonlinear ideal} $\left[\textfrak{A}^{L}, \mathbf{A}^{L}\right]$ is a nonlinear ideal $\textfrak{A}^{L}$ with a Lipschitz p--norm $\mathbf{A}^{L}$ such that all linear spaces $\textfrak{A}^{L}(X,F)$ are complete. 

 \item We call $\mathbf{A}^{L}$ a \textbf{strongly nonlinear ideal norm} if the condition $\bf (\widetilde{QNOI_0})$ is replaced by:
  $$\mathbf{A}^{L}\left(g\boxtimes e\right)\leq Lip(g)\cdot\left\|e\right\|\ \ \  for\ \  g\in X^{\#}\ \  and\ \  e\in F.$$
\end{itemize}
\end{rem}

\begin{prop}
Let $\left[\textfrak{A}^{L}, \mathbf{A}^{L}\right]$ be a Lipschitz $p$--normed nonlinear ideal. Then $Lip(T)\leq\mathbf{A}^{L}(T)$ for all $T\in\textfrak{A}^{L}$.  
\end{prop}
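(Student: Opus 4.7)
The plan is to reduce the inequality to the scalar-valued case by post-composing $T$ with an arbitrary functional in $B_{F^{*}}$, and then to exploit the fact that a Lipschitz $p$-norm agrees with $Lip(\cdot)$ exactly on rank-one tensors thanks to $(\widetilde{QNOI_0})$.

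Concretely, I would fix $T\in\textfrak{A}^{L}(X,F)$ together with an arbitrary $e^{*}\in B_{F^{*}}$. Regarding $e^{*}$ as a member of $\mathfrak{L}(F,\mathbb{K})$ with $\|e^{*}\|\le 1$, and using $I_{X}\in Lip(X,X)$ with $Lip(I_{X})=1$, the ideal-norm inequality $(\widetilde{QNOI_2})$ applied to the trivial factorization $e^{*}\circ T = e^{*}\circ T\circ I_{X}$ yields
\[
\mathbf{A}^{L}(e^{*}\circ T)\;\le\;\|e^{*}\|\cdot\mathbf{A}^{L}(T)\cdot Lip(I_{X})\;\le\;\mathbf{A}^{L}(T).
\]
Because of the standing pointed convention $T(0)=0$, the composite $g:=e^{*}\circ T$ belongs to $X^{\#}$, and by the description of the elementary tensor recorded in Remark \ref{Auto9} one has $g\boxtimes 1 = g$ as a Lipschitz map $X\to\mathbb{K}$. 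Applying $(\widetilde{QNOI_0})$ to the pair $(g,1)$ then gives
\[
Lip(e^{*}\circ T)\;=\;Lip(g)\cdot\|1\|\;=\;\mathbf{A}^{L}(g\boxtimes 1)\;=\;\mathbf{A}^{L}(e^{*}\circ T)\;\le\;\mathbf{A}^{L}(T).
\]

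To finish, I would pass to the supremum over $e^{*}\in B_{F^{*}}$. By Hahn--Banach, for all $x,y\in X$ with $x\ne y$,
\[
\frac{\|Tx-Ty\|}{d(x,y)}\;=\;\sup_{e^{*}\in B_{F^{*}}}\frac{|e^{*}(Tx-Ty)|}{d(x,y)},
\]
so taking suprema over $x\ne y$ and then over $e^{*}\in B_{F^{*}}$ gives $Lip(T)=\sup_{e^{*}\in B_{F^{*}}} Lip(e^{*}\circ T)\le\mathbf{A}^{L}(T)$. I anticipate no real obstacle here; the only minor formal checks are that $I_{X}$ is a legitimate choice for $A$ in $(\widetilde{QNOI_2})$ (with $X_{0}=X$) and that $e^{*}\circ T\in X^{\#}$, both of which are immediate. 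Note that the $p$-triangle inequality $(\widetilde{QNOI_1})$ plays no role in the argument, which in particular shows that the statement holds for any Lipschitz ideal norm satisfying $(\widetilde{QNOI_0})$ and $(\widetilde{QNOI_2})$.
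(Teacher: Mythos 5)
Your proposal is correct and follows essentially the same route as the paper: post-compose $T$ with functionals $e^{*}\in B_{F^{*}}$, identify $e^{*}\circ T$ with the rank-one tensor $(e^{*}\circ T)\boxtimes 1$ so that $(\widetilde{QNOI_0})$ gives $Lip(e^{*}\circ T)=\mathbf{A}^{L}(e^{*}\circ T)$, bound by $\mathbf{A}^{L}(T)$ via $(\widetilde{QNOI_2})$, and take the supremum. The only cosmetic difference is that the paper justifies $Lip(T)=\sup_{e^{*}\in B_{F^{*}}}Lip(e^{*}\circ T)$ via the identity $Lip(T)=\|T^{\#}_{|_{F^{*}}}\|$ recorded in the preliminaries, while you deduce it directly from Hahn--Banach; these are equivalent.
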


\begin{proof}
Let $T$ be an arbitrary Lipschitz operator in $\textfrak{A}^{L}(X,F)$. 
\begin{align}
Lip(T)=\left\|T^{\#}_{|_{{F}^{*}}}\right\|=\sup\left\{Lip(T^{\#} b^{*}) : b^{*}\in B_{{F}^{*}}\right\}&=\sup\left\{Lip\left(b^{*}\circ T\right) : b^{*}\in B_{{F}^{*}}\right\}\nonumber \\
&=\sup\left\{\mathbf{A}^{L}\left((b^{*}\circ T) \boxtimes 1\right) : b^{*}\in B_{{F}^{*}}\right\} \nonumber \\
&=\sup\left\{\mathbf{A}^{L}\left(b^{*}\circ T\right)  : b^{*}\in B_{{F}^{*}}\right\} \nonumber \\
&\leq\mathbf{A}^{L}(T). \nonumber 
\end{align}
\end{proof}


\begin{rem}
If $p=1$, then $\mathbf{A}^{L}$ is simply called a \textbf{Lipschitz norm} and $\left[\textfrak{A}^{L}, \mathbf{A}^{L}\right]$ is said to be a \textbf{Lipschitz Banach nonlinear ideal}.  
\end{rem}

We now formulate an important criterion which will be permanently used in the sequel.

\begin{prop}\label{Auto12}
Let $\textfrak{A}^{L}$ be a subclass of $Lip$ with an $\mathbb{R}^{+}-$valued function $\mathbf{A}^{L}$ such that the following conditions are satisfied $\left(0<p\leq 1\right)$:

\begin{description}
	\item[$\bf (1)$] $g\boxtimes e\in\textfrak{A}^{L}(X,F)$ and $\mathbf{A}^{L}\left(g\boxtimes e\right)\leq Lip(g)\cdot\left\|e\right\|$ for $g\in X^{\#}$ and $e\in F$.
	\item[$\bf (2)$] It follows from $S_{1}, S_{2}, S_{3},\ldots\in\textfrak{A}^{L}(X,F)$ and $\sum\limits_{n=1}^{\infty}\mathbf{A}^{L}(S_{n})^{p}<\infty$ that $S:=\sum\limits_{n=1}^{\infty} S_{n}\in\textfrak{A}^{L}(X,F)$ and $\mathbf{A}^{L}(S)^{p}\leq\sum\limits_{n=1}^{\infty}\mathbf{A}^{L}(S_{n})^{p}$.
	\item[$\bf (3)$] $A\in Lip(X_{0},X)$, $T\in\textfrak{A}^{L}(X,F)$, and $B\in\mathfrak{L}(F,F_{0})$ imply $BTA\in\textfrak{A}^{L}(X_{0},F_{0})$ and $\mathbf{A}^{L}\left(BTA\right)\leq\left\|B\right\|\mathbf{A}^{L}(T)\: Lip(A)$.
\end{description}
Then $\left[\textfrak{A}^{L}, \mathbf{A}^{L}\right]$ is a strongly Lipschitz $p-$Banach nonlinear ideal.
\end{prop}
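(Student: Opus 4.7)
The plan is to establish the three Lipschitz $p$-norm axioms $(\widetilde{QNOI_0})$--$(\widetilde{QNOI_2})$ plus component completeness one by one, extracting each from the corresponding hypothesis of the proposition. Condition $(1)$ immediately gives $(\widetilde{NOI_0})$ and the strongly-$p$-norm version of $(\widetilde{QNOI_0})$; condition $(3)$ immediately gives $(\widetilde{NOI_2})$ and $(\widetilde{QNOI_2})$. The substantive work is to deduce $(\widetilde{NOI_1})$, the finite $p$-triangle inequality $(\widetilde{QNOI_1})$, and the completeness of every component $\textfrak{A}^{L}(X,F)$ from condition $(2)$.

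First I would observe that $\mathbf{A}^{L}(0)=0$: by $(1)$, $0=0\boxtimes 0\in\textfrak{A}^{L}(X,F)$ and $\mathbf{A}^{L}(0)\leq Lip(0)\cdot\|0\|=0$. This lets me degenerate condition $(2)$ to the finite case: given $S,T\in\textfrak{A}^{L}(X,F)$, set $S_{1}:=S$, $S_{2}:=T$, and $S_{n}:=0$ for $n\geq 3$. Then $\sum\mathbf{A}^{L}(S_{n})^{p}=\mathbf{A}^{L}(S)^{p}+\mathbf{A}^{L}(T)^{p}<\infty$, and $(2)$ yields $S+T=\sum_{n} S_{n}\in\textfrak{A}^{L}(X,F)$ together with the $p$-triangle inequality $\mathbf{A}^{L}(S+T)^{p}\leq\mathbf{A}^{L}(S)^{p}+\mathbf{A}^{L}(T)^{p}$. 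This disposes of $(\widetilde{NOI_1})$ and $(\widetilde{QNOI_1})$ simultaneously.

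The main obstacle is completeness. I would use the standard Pietsch-style subsequence trick. Suppose $(T_{n})$ is Cauchy in $\textfrak{A}^{L}(X,F)$ with respect to $\mathbf{A}^{L}$. Extract a subsequence $(T_{n_{k}})$ satisfying $\mathbf{A}^{L}(T_{n_{k+1}}-T_{n_{k}})^{p}<2^{-k}$. The telescoping series $\sum_{k\geq 1}(T_{n_{k+1}}-T_{n_{k}})$ then meets the hypothesis of $(2)$, so it sums to some $R\in\textfrak{A}^{L}(X,F)$; set $T:=T_{n_{1}}+R\in\textfrak{A}^{L}(X,F)$. Applying $(2)$ to the tail $\sum_{j\geq k}(T_{n_{j+1}}-T_{n_{j}})$ gives
\begin{equation*}
\mathbf{A}^{L}(T-T_{n_{k}})^{p}\;\leq\;\sum_{j=k}^{\infty}\mathbf{A}^{L}(T_{n_{j+1}}-T_{n_{j}})^{p}\;<\;2^{-k+1}\;\longrightarrow\;0,
\end{equation*}
so $T_{n_{k}}\to T$ in $\mathbf{A}^{L}$. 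A Cauchy sequence with a convergent subsequence is itself convergent (via the $p$-triangle inequality just proved), so $T_{n}\to T$ and $\textfrak{A}^{L}(X,F)$ is complete.

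The delicate point I expect to have to spell out carefully is the sense in which $\sum S_{n}$ in condition $(2)$ converges, since for $0<p\leq 1$ we cannot appeal to $\sum\mathbf{A}^{L}(S_{n})<\infty$; here I will rely on the fact that condition $(2)$ as stated already asserts the existence of the sum $S$ inside $\textfrak{A}^{L}(X,F)$, so the argument only has to compute the $\mathbf{A}^{L}$-distance from $S$ to its partial sums, which is done exactly by the same inequality used above for the tail. Assembling the items $(\widetilde{NOI_0})$, $(\widetilde{NOI_1})$, $(\widetilde{NOI_2})$ shows $\textfrak{A}^{L}$ is a nonlinear ideal; the three norm estimates $(\widetilde{QNOI_0})$ (in its strong form), $(\widetilde{QNOI_1})$, $(\widetilde{QNOI_2})$ plus completeness then upgrade the pair $[\textfrak{A}^{L},\mathbf{A}^{L}]$ to a strongly Lipschitz $p$-Banach nonlinear ideal, as required.
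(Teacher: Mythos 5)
Your proof is correct and follows precisely the route the paper has in mind: the paper's entire proof is the one-line remark that condition~$\bf (2)$ ``summarizes the $p$-triangle inequality and the completeness, as well,'' and your argument supplies exactly the standard details (padding with zeros to extract the $p$-triangle inequality, and the Cauchy--subsequence telescoping trick for completeness) that the paper leaves implicit.
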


\begin{proof}
The only point is to observe that $\bf (2)$ summarizes the $p-$triangle inequality and the completeness, as well. \\
\end{proof}


\subsection{Lipschitz Nuclear Operators}

A Lipschitz operator $T\in Lip(X,F)$ is called Lipschitz nuclear if $$T=\sum\limits_{j=1}^{\infty} g_{j}\boxtimes e_{j},$$ with $g_{1}, g_{2}, g_{3}, \ldots\in X^{\#}$ and $e_{1}, e_{2}, e_{3}, \ldots\in F$ such that $\sum\limits_{j=1}^{\infty} Lip(g_{j})\left\|e_{j}\right\|<\infty$.

We put $$\mathbf{N}^{L}(T):=\inf\sum\limits_{j=1}^{\infty} Lip(g_{j})\left\|e_{j}\right\|,$$ where the infimum is taken over all so--called Lipschitz nuclear representations described above. 

The class of all Lipschitz nuclear operators is denoted by $\textfrak{N}^{L}$.

\begin{rem}
The series $\sum\limits_{j=1}^{\infty} g_{j}\boxtimes e_{j}$ converges in the Lipschitz norm topology of $Lip(X,F)$. Therefore Lipschitz nuclear operators can be approximated by Lipschitz finite rank operators.
\end{rem}

\begin{prop}
$\left[\textfrak{N}^{L}, \mathbf{N}^{L}\right]$ is a strongly Lipschitz normed nonlinear ideal. 
\end{prop}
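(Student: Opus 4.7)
The plan is to invoke Proposition~\ref{Auto12} with $p=1$, since that criterion absorbs both the $1$-triangle inequality and completeness of each component $\textfrak{N}^{L}(X,F)$ into its single hypothesis $\bf(2)$. I would verify the three conditions $\bf(1)$--$\bf(3)$ of that Proposition in turn, then simply cite Proposition~\ref{Auto12} to conclude. Condition $\bf(1)$ is immediate: the one-term expansion of $g\boxtimes e$ is itself a Lipschitz nuclear representation, giving $\mathbf{N}^{L}(g\boxtimes e)\leq Lip(g)\|e\|$ at once.

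For condition $\bf(3)$, I would start from a nuclear representation $T=\sum_{j=1}^{\infty} g_{j}\boxtimes e_{j}$ chosen so that $\sum_{j} Lip(g_{j})\|e_{j}\|<\mathbf{N}^{L}(T)+\varepsilon$. Following the argument already used in the proof that $\textfrak{F}^{L}$ is a nonlinear ideal, one rewrites the composition as $BTA=\sum_{j=1}^{\infty}(g_{j}\circ A)\boxtimes(Be_{j})$. Since $g_{j}\circ A\in X_{0}^{\#}$ with $Lip(g_{j}\circ A)\leq Lip(g_{j})\,Lip(A)$ and $\|Be_{j}\|\leq\|B\|\|e_{j}\|$, this is a nuclear representation of $BTA$ of total weight at most $\|B\|(\mathbf{N}^{L}(T)+\varepsilon)\,Lip(A)$; letting $\varepsilon\to 0$ yields the desired bound.

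The real content lies in condition $\bf(2)$. Given $S_{1},S_{2},\ldots\in\textfrak{N}^{L}(X,F)$ with $\sum_{n}\mathbf{N}^{L}(S_{n})<\infty$, fix $\varepsilon>0$ and pick for each $n$ a nuclear representation $S_{n}=\sum_{j} g_{n,j}\boxtimes e_{n,j}$ with $\sum_{j} Lip(g_{n,j})\|e_{n,j}\|<\mathbf{N}^{L}(S_{n})+\varepsilon/2^{n}$. Enumerate the doubly-indexed family $\{g_{n,j}\boxtimes e_{n,j}\}_{n,j\in\mathbb{N}}$ as a single sequence of elementary Lipschitz tensors. Because $Lip(g_{n,j}\boxtimes e_{n,j})=Lip(g_{n,j})\|e_{n,j}\|$ by Remark~\ref{Auto9}, this enumerated series is absolutely convergent in the Banach space $(Lip(X,F),Lip(\cdot))$ with total weight bounded by $\sum_{n}\mathbf{N}^{L}(S_{n})+\varepsilon$, hence converges in Lipschitz norm to some $\widetilde{S}\in Lip(X,F)$. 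Regrouping the absolutely convergent expansion first over $j$ (producing $S_{n}$) and then over $n$ identifies $\widetilde{S}$ with $S:=\sum_{n}S_{n}$, so that $S\in\textfrak{N}^{L}(X,F)$ and $\mathbf{N}^{L}(S)\leq\sum_{n}\mathbf{N}^{L}(S_{n})+\varepsilon$; letting $\varepsilon\to 0$ completes $\bf(2)$.

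The step I expect to be the main obstacle is precisely this rearrangement argument in $\bf(2)$: one must justify that the single enumeration of the doubly-indexed family actually sums to $\sum_{n}S_{n}$ in the Lipschitz norm. The justification rests on the fact that absolute Lip-summability permits unconditional summation in the Banach space $(Lip(X,F),Lip(\cdot))$, combined with the identification, valid for each fixed $n$, of the inner $j$-series with $S_{n}$. Once this is in hand, the three hypotheses of Proposition~\ref{Auto12} are verified and $[\textfrak{N}^{L},\mathbf{N}^{L}]$ is a strongly Lipschitz normed (in fact Banach) nonlinear ideal.
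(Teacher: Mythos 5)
Your proposal follows essentially the same route as the paper: verify the three hypotheses of Proposition~\ref{Auto12} with $p=1$ and conclude. You are in fact somewhat more careful than the paper in two places: in condition~$\bf(2)$ you explicitly justify the rearrangement of the doubly-indexed series via absolute (hence unconditional) convergence in $(Lip(X,F),Lip(\cdot))$, which the paper simply asserts, and in condition~$\bf(3)$ you correctly write the pullback of $g_j$ as $g_j\circ A$ where the paper has the apparent typo $T^{\#}g_j$.
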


\begin{proof}
We use criterion \ref{Auto12}.
\begin{description}
	\item[$\bf (1)$] Let $g\in X^{\#}$ and $e\in F$, it follows that $g\boxtimes e\in\textfrak{N}^{L}(X,F)$ and $\mathbf{N}^{L}(g\boxtimes e)\leq Lip(g)\cdot\left\|e\right\|$.
	\item[$\bf (2)$] Let $T_{1}, T_{2}, T_{3},\ldots\in\textfrak{N}^{L}(X,F)$ such that $\sum\limits_{n=1}^{\infty}\mathbf{N}^{L}(T_{n})<\infty$.	Given $\epsilon > 0$, choose nuclear representations $T_{n}=\sum\limits_{j=1}^{\infty} g_{nj}\boxtimes e_{nj}$ with $\sum\limits_{j=1}^{\infty} Lip(g_{nj})\left\|e_{nj}\right\|\leq (1+\epsilon)\; \mathbf{N}^{L}(T_{n})$. Then $T:=\sum\limits_{n=1}^{\infty} T_{n}=\sum\limits_{n, j=1}^{\infty} g_{nj}\boxtimes e_{nj}$ and $\sum\limits_{n, j=1}^{\infty} Lip(g_{nj})\left\|e_{nj}\right\|\leq (1+\epsilon) \sum\limits_{n=1}^{\infty}\mathbf{N}^{L}(T_{n})$
imply $T\in\textfrak{N}^{L}(X,F)$	and $\mathbf{N}^{L}(T)\leq (1+\epsilon) \sum\limits_{n=1}^{\infty}\mathbf{N}^{L}(T_{n})$.
	
\item[$\bf (3)$] Let $T\in\textfrak{N}^{L}(X,F)$ and $\epsilon > 0$. Consider a nuclear representation $T=\sum\limits_{j=1}^{\infty} g_{j}\boxtimes e_{j}$ such that $\sum\limits_{j=1}^{\infty} Lip(g_{j})\left\|e_{j}\right\|\leq (1+\epsilon)\; \mathbf{N}^{L}(T)$. If $A\in Lip(X_{0},X)$ and $B\in\mathfrak{L}(F,F_{0})$, then $BTA=\sum\limits_{j=1}^{\infty} T^{\#} g_{j}\boxtimes B e_{j}$ and $\sum\limits_{j=1}^{\infty} Lip(T^{\#} g_{j}) \left\|B e_{j}\right\|\leq (1+\epsilon)\; \left\|B\right\|\mathbf{N}^{L}(T)\: Lip(A)$. This proves that $BTA\in\textfrak{N}^{L}(X_{0},F_{0})$ and $\mathbf{N}^{L}\left(BTA\right)\leq (1+\epsilon)\left\|B\right\|\mathbf{N}^{L}(T)\: Lip(A)$.
\end{description}
\end{proof}



\subsection{Lipschitz Hilbert Operators}

A Lipschitz operator $T\in Lip(X,F)$ is called Lipschitz Hilbert operator if $T=BA$ with $A\in Lip(X,H)$ and $B\in\mathfrak{L}(H, F)$, where $H$ is a Hilbert space. 

We put $$\mathbf{H}^{L}(T):=\inf \left\|B\right\| Lip(A),$$ where the infimum is taken over all possible factorizations. 

The class of all Lipschitz Hilbert operators is denoted by $\textfrak{H}^{L}$.

\begin{prop}
$\left[\textfrak{H}^{L}, \mathbf{H}^{L}\right]$ is a strongly Lipschitz normed nonlinear ideal. 
\end{prop}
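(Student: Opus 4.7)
The plan is to invoke Proposition \ref{Auto12} with $p = 1$, verifying its three hypotheses for $\left[\textfrak{H}^{L}, \mathbf{H}^{L}\right]$. The key technical input is the $\ell_2$-sum construction on Hilbert spaces, which is exactly what makes the norm inequality in $\bf (2)$ work (and what distinguishes the Hilbert ideal from a generic factorization ideal). I expect step $\bf (2)$ to be the main obstacle; steps $\bf (1)$ and $\bf (3)$ are essentially formal.

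For $\bf (1)$, I would take $H = \mathbb{K}$ (a Hilbert space) and use the factorization $g\boxtimes e : X \xrightarrow{g\boxtimes 1} \mathbb{K} \xrightarrow{1\otimes e} F$. Since $Lip(g\boxtimes 1) = Lip(g)$ and $\|1\otimes e\| = \|e\|$, this yields $g\boxtimes e \in \textfrak{H}^L(X,F)$ with $\mathbf{H}^L(g\boxtimes e) \leq Lip(g)\cdot \|e\|$.

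For $\bf (3)$, given $A \in Lip(X_0, X)$, $T = \widetilde{B}\widetilde{A} \in \textfrak{H}^L(X,F)$ factoring through a Hilbert space $H$, and $B \in \mathfrak{L}(F,F_0)$, I would factor $BTA = (B\widetilde{B})(\widetilde{A}A)$ through the same $H$. Then $\|B\widetilde{B}\| \cdot Lip(\widetilde{A}A) \leq \|B\|\|\widetilde{B}\|Lip(\widetilde{A})Lip(A)$, and taking the infimum over factorizations of $T$ gives $\mathbf{H}^L(BTA) \leq \|B\|\,\mathbf{H}^L(T)\,Lip(A)$.

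The main step is $\bf (2)$: suppose $S_1, S_2, \ldots \in \textfrak{H}^L(X,F)$ with $\sum_{n=1}^\infty \mathbf{H}^L(S_n) < \infty$. Fix $\epsilon > 0$ and choose factorizations $S_n = B_n A_n$ through Hilbert spaces $H_n$ with $\|B_n\| \cdot Lip(A_n) \leq (1+\epsilon)\,\mathbf{H}^L(S_n)$. I would rescale so that $\|B_n\|^2 = Lip(A_n)^2 = (1+\epsilon)\mathbf{H}^L(S_n)$ (replacing $A_n$ by $\lambda_n A_n$ and $B_n$ by $\lambda_n^{-1} B_n$ for a suitable $\lambda_n > 0$). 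Then form the Hilbert $\ell_2$-direct sum $H := \left(\bigoplus_{n} H_n\right)_{\ell_2}$ and define
\begin{equation*}
A : X \longrightarrow H, \quad Ax := (A_n x)_{n\in\mathbb{N}}, \qquad B : H \longrightarrow F, \quad B((h_n)_{n}) := \sum_{n=1}^\infty B_n h_n.
\end{equation*}
The estimates $\|Ax - Ay\|_H^2 = \sum_n \|A_n x - A_n y\|^2 \leq \left(\sum_n Lip(A_n)^2\right)d(x,y)^2$ show $A$ is Lipschitz, and $\|B(h_n)\| \leq \sum_n \|B_n\|\|h_n\| \leq \left(\sum_n \|B_n\|^2\right)^{1/2}\|(h_n)\|_H$ by Cauchy--Schwarz shows $B$ is bounded; moreover $BA = \sum_n B_n A_n = \sum_n S_n = S$ with the series converging in the Lipschitz operator norm (since $Lip(S_n) \leq \mathbf{H}^L(S_n)$ is summable). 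Putting the estimates together,
\begin{equation*}
\mathbf{H}^L(S) \leq \|B\|\cdot Lip(A) \leq \left(\sum_n \|B_n\|^2\right)^{1/2}\left(\sum_n Lip(A_n)^2\right)^{1/2} = (1+\epsilon)\sum_{n=1}^\infty \mathbf{H}^L(S_n),
\end{equation*}
and letting $\epsilon \to 0$ yields the triangle inequality $\mathbf{H}^L(S) \leq \sum_n \mathbf{H}^L(S_n)$, which in criterion \ref{Auto12} encodes both subadditivity and completeness. The delicate point to check carefully will be well-definedness and boundedness of $B$ on the $\ell_2$-sum, and the identification $BA = \sum S_n$, both of which hinge on the equalized normalization $\|B_n\| = Lip(A_n)$ derived from the Cauchy--Schwarz matching of the two $\ell_2$-norms.
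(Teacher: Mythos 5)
Your proof takes the same route as the paper: verify the three conditions of Proposition \ref{Auto12} (with $p=1$), with $\bf(1)$ and $\bf(3)$ handled by the obvious factorizations and $\bf(2)$ handled by rescaling the factorizations so that $\|B_n\|^2 = Lip(A_n)^2 = (1+\epsilon)\mathbf{H}^L(S_n)$, then passing to the $\ell_2$-sum $H=\ell_2(H_n)$ and applying Cauchy--Schwarz. Your write-up is a bit more explicit than the paper's (which states the $\ell_2$-estimates for $A$ and $B$ without derivation and has a typo, writing $\|A_n\|^2$ where $\|B_n\|^2$ is meant), but the decomposition, the normalization trick, and the key inequality $\mathbf{H}^L(S)\le \|B\|\,Lip(A)\le(1+\epsilon)\sum_n\mathbf{H}^L(S_n)$ are identical.
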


\begin{proof}
We use criterion \ref{Auto12}.
\begin{description}
	\item[$\bf (1)$] Let $g\in X^{\#}$ and $e\in F$, since $\mathbb{K}$ is a Hilbert space, we have $g\boxtimes e =(1\otimes e)\circ (g\boxtimes 1)\in\textfrak{H}^{L}{(X, F)}$, where $1\otimes e\in\mathfrak{L}(\mathbb{K},F)$ and $g\boxtimes 1\in Lip(X,\mathbb{K})$ and $\mathbf{H}^{L}(g\boxtimes e)\leq Lip(g)\cdot\left\|e\right\|$.
	\item[$\bf (2)$] Let $T_{1}, T_{2}, T_{3},\ldots\in\textfrak{H}^{L}(X,F)$ such that $\sum\limits_{n=1}^{\infty}\mathbf{H}^{L}(T_{n})<\infty$.	Given $\epsilon > 0$, we choose factorizations $T_{n}=B_{n}A_{n}$ such that $A_{n}\in Lip(X,H_{n})$ and $B_{n}\in\mathfrak{L}(H_{n}, F)$   satisfy the conditions $$Lip(A_{n})^{2}\leq (1+\epsilon) \:\mathbf{H}^{L}(T_{n}) \ \ \ and \ \ \ \left\|A_{n}\right\|^{2}\leq (1+\epsilon) \:\mathbf{H}^{L}(T_{n}).$$ From the Hilbert space $H:=\ell_{2}(H_{n})$. Put $$A:=\sum\limits_{n=1}^{\infty} J_{n} A_{n}\ \ \  and \ \ \  B:=\sum\limits_{n=1}^{\infty} B_{n} Q_{n}.$$ Then $$Lip(A)^{2}\leq\sum\limits_{n=1}^{\infty} Lip(A_{n})^{2}\ \ \  and \ \ \  \left\|B\right\|^{2}\leq\sum\limits_{n=1}^{\infty} \left\|B_{n}\right\|^{2}.$$ Finally, it follows from $T=BA$ that $T\in\textfrak{H}^{L}(X,F)$. Moreover, we have $$\mathbf{H}^{L}(T)\leq\left\|B\right\|\  Lip(A)\leq (1+\epsilon)\sum\limits_{n=1}^{\infty}\mathbf{H}^{L}(T_{n}).$$ 
	
\item[$\bf (3)$] This property is trivial.
\end{description}
\end{proof}

\subsection{Products of Lipschitz $r$--Normed Nonlinear Ideals}

Let $\left[\mathfrak{A}, \mathbf{A}\right]$ and $\left[\textfrak{A}^{L}, \mathbf{A}^{L}\right]$ be $p$--normed ideal and Lipschitz $q$--normed nonlinear ideal, respectively. For every Lipschitz operator $T\in Lip(X,F)$ belonging to the product $\mathfrak{A}\circ\textfrak{A}^{L}$ we put $$\mathbf{A}\circ\mathbf{A}^{L}(T):=\inf \mathbf{A}(B)\mathbf{A}^{L}(A),$$
where the infimum is taken over all factorizations $T=B\circ A$ with $B\in\mathfrak{A}\left(M, F\right)$ and $A\in\textfrak{A}^{L}\left(X, M\right)$ and $\frac{1}{r}:= \frac{1}{p} + \frac{1}{q}$.

\begin{rem}
The product $\left[\mathfrak{A}\circ\textfrak{A}^{L}, \mathbf{A}\circ\mathbf{A}^{L}\right]$ will frequently be written as $\left[\mathfrak{A}, \mathbf{A}\right]\circ\left[\textfrak{A}^{L}, \mathbf{A}^{L}\right]$. 
\end{rem} 

\begin{prop}
$\left[\mathfrak{A}\circ\textfrak{A}^{L}, \mathbf{A}\circ\mathbf{A}^{L}\right]$ is strongly Lipschitz $r$--Banach nonlinear ideal. 
\end{prop}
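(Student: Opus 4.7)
The plan is to apply Proposition \ref{Auto12} with exponent $r$, which reduces the claim to verifying conditions $\bf (1)$, $\bf (2)$, $\bf (3)$ of that criterion. The ideal-theoretic setup is already in place from Proposition \ref{Auto8}, so what remains is the quantitative refinement. Condition $\bf (1)$ is immediate from the elementary factorization $g\boxtimes e=(1\otimes e)\circ(g\boxtimes 1)$ through $\mathbb{K}$, giving $\mathbf{A}\circ\mathbf{A}^{L}(g\boxtimes e)\leq \mathbf{A}(1\otimes e)\,\mathbf{A}^{L}(g\boxtimes 1)\leq\|e\|\,Lip(g)$. Condition $\bf (3)$ follows by taking an $(1+\epsilon)$-optimal factorization $T=\widetilde{B}\widetilde{A}$ through $M$ and writing $BTA=(B\widetilde{B})(\widetilde{A}A)$; multiplying the estimates $\mathbf{A}(B\widetilde{B})\leq\|B\|\,\mathbf{A}(\widetilde{B})$ and $\mathbf{A}^{L}(\widetilde{A}A)\leq\mathbf{A}^{L}(\widetilde{A})\,Lip(A)$ and letting $\epsilon\to 0$ delivers $\mathbf{A}\circ\mathbf{A}^{L}(BTA)\leq\|B\|\,\mathbf{A}\circ\mathbf{A}^{L}(T)\,Lip(A)$.

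The heart of the proof is condition $\bf (2)$. Given $T_{n}\in\mathfrak{A}\circ\textfrak{A}^{L}(X,F)$ with $\sigma:=\sum_{n}\mathbf{A}\circ\mathbf{A}^{L}(T_{n})^{r}<\infty$, I fix $\epsilon>0$ and choose factorizations $T_{n}=\widetilde{B}_{n}\widetilde{A}_{n}$ through Banach spaces $M_{n}$ with $c_{n}:=\mathbf{A}(\widetilde{B}_{n})\,\mathbf{A}^{L}(\widetilde{A}_{n})\leq(1+\epsilon)\,\mathbf{A}\circ\mathbf{A}^{L}(T_{n})$. The crucial algebraic move is to rescale by a positive scalar $\lambda_{n}$ so that the new factors $B_{n}:=\lambda_{n}\widetilde{B}_{n}$ and $A_{n}:=\lambda_{n}^{-1}\widetilde{A}_{n}$ are balanced, $\mathbf{A}(B_{n})^{p}=\mathbf{A}^{L}(A_{n})^{q}$; by the identity $a^{p}=b^{q}=(ab)^{r}$, which is exactly the content of $\tfrac{1}{r}=\tfrac{1}{p}+\tfrac{1}{q}$, this common value is $c_{n}^{r}$. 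I then form a Banach direct sum $M$ of the $M_{n}$ in an $\ell$-style norm compatible with both ideal structures, modelled on the proof for $\textfrak{H}^{L}$, and define $A\colon X\to M$ by $A(x)=(A_{n}(x))_{n}$ and $B\colon M\to F$ by $B((m_{n}))=\sum_{n}B_{n}m_{n}$, so that $BA=\sum_{n}T_{n}=:T$. Applying the $p$-triangle inequality for $\mathbf{A}$ to the partial sums $\sum_{n\leq N}B_{n}Q_{n}$ together with completeness of $\mathfrak{A}(M,F)$ shows $B\in\mathfrak{A}(M,F)$ with $\mathbf{A}(B)^{p}\leq\sum_{n}\mathbf{A}(B_{n})^{p}=\sum_{n}c_{n}^{r}$, and the parallel argument with the $q$-triangle inequality yields $A\in\textfrak{A}^{L}(X,M)$ with $\mathbf{A}^{L}(A)^{q}\leq\sum_{n}c_{n}^{r}$. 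A direct calculation then closes the argument:
\begin{align*}
\mathbf{A}\circ\mathbf{A}^{L}(T)^{r} &\leq \bigl(\mathbf{A}(B)\,\mathbf{A}^{L}(A)\bigr)^{r} = \bigl(\mathbf{A}(B)^{p}\bigr)^{r/p}\bigl(\mathbf{A}^{L}(A)^{q}\bigr)^{r/q} \\
&\leq \Bigl(\sum_{n}c_{n}^{r}\Bigr)^{r/p}\Bigl(\sum_{n}c_{n}^{r}\Bigr)^{r/q} = \Bigl(\sum_{n}c_{n}^{r}\Bigr)^{r(1/p+1/q)} = \sum_{n}c_{n}^{r} \leq (1+\epsilon)^{r}\sigma,
\end{align*}
so letting $\epsilon\to 0$ yields the $r$-triangle inequality; completeness comes along because the partial sums of $\sum_{n}T_{n}$ are Cauchy under the same estimate.

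The main obstacle I expect is the direct-sum construction: producing a Banach $\ell$-sum of the $M_{n}$ in which the diagonal map $A$ genuinely lies in $\textfrak{A}^{L}(X,M)$ with the bound $\mathbf{A}^{L}(A)^{q}\leq\sum\mathbf{A}^{L}(A_{n})^{q}$, while simultaneously the block map $B$ lies in $\mathfrak{A}(M,F)$ with the bound $\mathbf{A}(B)^{p}\leq\sum\mathbf{A}(B_{n})^{p}$. Without further structural assumptions on the ambient ideals this is the delicate point, and the safe route is to argue via partial sums inside the strongly $p$- and $q$-Banach components and pass to the limit using their completeness, mirroring the $\ell_{2}(H_{n})$ construction already carried out in the proof for $\textfrak{H}^{L}$.
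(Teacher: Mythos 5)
Your proof is correct and follows essentially the same approach as the paper: applying the criterion of Proposition \ref{Auto12}, choosing balanced factorizations (the paper simply prescribes $\mathbf{A}(B_n)\leq[(1+\epsilon)\mathbf{A}\circ\mathbf{A}^{L}(T_n)]^{r/p}$ and $\mathbf{A}^{L}(A_n)\leq[(1+\epsilon)\mathbf{A}\circ\mathbf{A}^{L}(T_n)]^{r/q}$, which is exactly your $\lambda_n$-rescaling in disguise), and assembling $T=BA$ through a direct sum of the $M_n$. On the one delicate point you flag, the paper takes $M:=\ell_2(M_n)$, which works precisely because the natural injections $J_n$ and projections $Q_n$ are contractions there, so $\mathbf{A}(B_nQ_n)\leq\mathbf{A}(B_n)$ and $\mathbf{A}^{L}(J_nA_n)\leq\mathbf{A}^{L}(A_n)$, and the $p$- and $q$-triangle inequalities together with completeness of the components give the estimates $\mathbf{A}(B)^p\leq\sum_n\mathbf{A}(B_n)^p$ and $\mathbf{A}^{L}(A)^q\leq\sum_n\mathbf{A}^{L}(A_n)^q$ exactly as in your partial-sum route.
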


\begin{proof}
From Proposition \ref{Auto8} we have $\mathfrak{A}\circ\textfrak{A}^{L}$ is nonlinear ideal. The condition $\bf (\widetilde{QNOI_0})$ satisfied. To prove condition $\bf (\widetilde{QNOI_1})$, let $T_{1}$ and $T_{2}$ in $\mathfrak{A}\circ\textfrak{A}^{L}(X, F)$. Given $\epsilon > 0$. Then there exists Banach spaces $M_{1}$, $M_{2}$ and operators $B_{i}\in\mathfrak{A}\left(M_{i}, F\right)$ and $A_{i}\in\textfrak{A}^{L}\left(X, M_{i}\right)$ so that $T_{i}=B_{i}\circ A_{i}$ with

$$\mathbf{A}(B_{1})\leq\left[(1+\epsilon)\cdot\mathbf{A}\circ\mathbf{A}^{L}(T_{1})\right]^{\frac{r}{p}},$$

$$\mathbf{A}(B_{2})\leq\left[(1+\epsilon)\cdot\mathbf{A}\circ\mathbf{A}^{L}(T_{2})\right]^{\frac{r}{p}},$$

$$\mathbf{A}^{L}(A_{1})\leq\left[(1+\epsilon)\cdot\mathbf{A}\circ\mathbf{A}^{L}(T_{1})\right]^{\frac{r}{q}},$$

$$\mathbf{A}^{L}(A_{2})\leq\left[(1+\epsilon)\cdot\mathbf{A}\circ\mathbf{A}^{L}(T_{2})\right]^{\frac{r}{q}}.$$

Therefore the construction of condition $\bf (\widetilde{NOI_1})$ in Proposition \ref{Auto8} implies that 
\begin{align}
\mathbf{A}\circ\mathbf{A}^{L}\left(T_{1} + T_{2}\right)^{r}&=\mathbf{A}\circ\mathbf{A}^{L}\left(B\circ A\right)^{r}\leq\mathbf{A}(B)^{r}\mathbf{A}^{L}(A)^{r}                                      \nonumber \\
&\leq\left[\mathbf{A}(B_{1})^{p} + \mathbf{A}(B_{2})^{p}\right]^{\frac{r}{p}}\left[\mathbf{A}^{L}(A_{1})^{q} + \mathbf{A}^{L}(A_{2})^{q}\right]^{\frac{r}{q}}                                       \nonumber \\
&\leq\left[(1+\epsilon)^{r}\left(\mathbf{A}\circ\mathbf{A}^{L}(T_{1})^{r} + \mathbf{A}\circ\mathbf{A}^{L}(T_{2})^{r}\right)\right]^{\frac{r}{p}}\left[(1+\epsilon)^{r}\left(\mathbf{A}\circ\mathbf{A}^{L}(T_{1})^{r} + \mathbf{A}\circ\mathbf{A}^{L}(T_{2})^{r}\right)\right]^{\frac{r}{q}}                                                       \nonumber \\
&\leq (1+\epsilon)^{r}\left(\mathbf{A}\circ\mathbf{A}^{L}(T_{1})^{r} + \mathbf{A}\circ\mathbf{A}^{L}(T_{2})^{r}\right).   \nonumber 
\end{align}
To prove condition $\bf (\widetilde{QNOI_2})$, let $A\in Lip(X_{0},X)$, $T\in\mathfrak{A}\circ\textfrak{A}^{L}(X,F)$, and $B\in\mathfrak{L}(F,F_{0})$. From the construction of condition $\bf (\widetilde{NOI_2})$ in Proposition \ref{Auto8} we have
\begin{align}
\mathbf{A}\circ\mathbf{A}^{L}(BTA)=\mathbf{A}\circ\mathbf{A}^{L}(\widetilde{\widetilde{B\,}}\circ\widetilde{\widetilde{A\,}})&:=\inf\mathbf{A}(B\circ\widetilde{B})\mathbf{A}^{L}(\widetilde{A}\circ A) \nonumber \\
&\leq\mathbf{A}(B\circ\widetilde{B})\mathbf{A}^{L}(\widetilde{A}\circ A)  \nonumber \\
&\leq\left\|B\right\| \: \mathbf{A}(\widetilde{B})\mathbf{A}^{L}(\widetilde{A}) \: Lip(A)   \nonumber \\
&\leq \left\|B\right\| \: \mathbf{A}\circ\mathbf{A}^{L}(T) \: Lip(A).   \nonumber 
\end{align}
 To show the completeness, let $T_{n}\in\mathfrak{A}\circ\textfrak{A}^{L}(X, F)$ such that $\sum\limits_{n=1}^{\infty}\mathbf{A}\circ\mathbf{A}^{L}(T_{n})^{r}<\infty$. Given $\epsilon > 0$, we can find factorizations $T=B_{n}\circ A_{n}$ such that the following conditions are satisfied:
$$B_{n}\in\mathfrak{A}\left(M_{n}, F\right) \ \ \  and \ \ \  \mathbf{A}(B_{n})\leq\left[(1+\epsilon)\:\mathbf{A}\circ\mathbf{A}^{L}(T_{n})\right]^{\frac{r}{p}},$$

$$A_{n}\in\textfrak{A}^{L}\left(X , M_{n}\right) \ \ \  and \ \ \  \mathbf{A}^{L}(A_{n})\leq\left[(1+\epsilon)\:\mathbf{A}\circ\mathbf{A}^{L}(T_{n})\right]^{\frac{r}{q}}.$$

Put $B:=\sum\limits_{n=1}^{\infty} B_{n} Q_{n}$, $A:=\sum\limits_{n=1}^{\infty} J_{n} A_{n}$, and $M:=\ell_{2}(M_{n})$. Then

$$\sum\limits_{n=1}^{\infty}\mathbf{A}(B_{n})^{p}\leq (1+\epsilon)^{r}\:\sum\limits_{n=1}^{\infty}\mathbf{A}\circ\mathbf{A}^{L}(T_{n})^{r},$$
and
$$\sum\limits_{n=1}^{\infty}\mathbf{A}^{L}(A_{n})^{q}\leq (1+\epsilon)^{r}\: \sum\limits_{n=1}^{\infty}\mathbf{A}\circ\mathbf{A}^{L}(T_{n})^{r}.$$
imply $B\in\mathfrak{A}\left(M, F\right)$ and $A\in\textfrak{A}^{L}\left(X , M\right)$. Moreover, we have 

$$\mathbf{A}(B)^{p}\leq (1+\epsilon)^{r}\:\sum\limits_{n=1}^{\infty}\mathbf{A}\circ\mathbf{A}^{L}(T_{n})^{r},$$
and
$$\mathbf{A}^{L}(A)^{q}\leq (1+\epsilon)^{r}\: \sum\limits_{n=1}^{\infty}\mathbf{A}\circ\mathbf{A}^{L}(T_{n})^{r}.$$

Since $T:=\sum\limits_{n=1}^{\infty} T_{n}$ has the factorization $T=B\circ A$ it follows that $T\in\mathfrak{A}\circ\textfrak{A}^{L}(X, F)$ and 

$$\mathbf{A}\circ\mathbf{A}^{L}(T)^{r}\leq (1+\epsilon)^{r}\: \sum\limits_{n=1}^{\infty}\mathbf{A}\circ\mathbf{A}^{L}(T_{n})^{r}.$$
Hence Proposition \ref{Auto12} yields the assertion. \\
\end{proof}

\subsection{Quotients of Lipschitz $r$--Normed Nonlinear Ideals}

Let $\left[\mathfrak{A}, \mathbf{A}\right]$ and $\left[\textfrak{A}^{L}, \mathbf{A}^{L}\right]$ be $p$--normed ideal and Lipschitz $q$--normed nonlinear ideal, respectively. For every Lipschitz operator $T\in Lip(X,F)$ belonging to the left--hand quotient $\mathfrak{A}^{-1}\circ\textfrak{A}^{L}$ we put

$$\mathbf{A^{-1}}\circ\mathbf{A}^{L}(T):=\sup\left\{\mathbf{A}^{L}(B\circ T) : B\in\mathfrak{A}(F,F_{0}), \mathbf{A}(B)\leq 1\right\},$$

where $F_{0}$ is an arbitrary Banach space and $\frac{1}{r}:= \frac{1}{p} + \frac{1}{q}$. 

\begin{rem}
The quotient  $\left[\mathfrak{A^{-1}}\circ\textfrak{A}^{L}, \mathbf{A^{-1}}\circ\mathbf{A}^{L}\right]$ will frequently be written as ${\left[\mathfrak{A}, \mathbf{A}\right]}^{-1}\circ\left[\textfrak{A}^{L}, \mathbf{A}^{L}\right]$. 
\end{rem} 

\begin{prop}
$\left[\mathfrak{A^{-1}}\circ\textfrak{A}^{L}, \mathbf{A^{-1}}\circ\mathbf{A}^{L}\right]$ is strongly Lipschitz $r$--Banach nonlinear ideal. 
\end{prop}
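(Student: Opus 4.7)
The plan is to verify the three hypotheses of Proposition \ref{Auto12} for the pair $\left[\mathfrak{A}^{-1} \circ \textfrak{A}^{L}, \mathbf{A}^{-1} \circ \mathbf{A}^{L}\right]$, since the underlying nonlinear ideal structure has already been established in Proposition \ref{Auto13}. I work throughout at the exponent $r$; because $r \leq q$, the scalar inequality $\left(\sum a_{n}\right)^{r/q} \leq \sum a_{n}^{r/q}$ (valid for $0 < r/q \leq 1$) converts the intrinsic $q$-estimates supplied by $\mathbf{A}^{L}$ into the announced $r$-estimates on the quotient.

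Hypotheses (1) and (3) of Proposition \ref{Auto12} reduce to unwinding definitions. For (1), fix $g \in X^{\#}$, $e \in F$ and any $B \in \mathfrak{A}(F, F_{0})$ with $\mathbf{A}(B) \leq 1$. Since $B \circ (g \boxtimes e) = g \boxtimes (Be)$, the norm bound $\bf (\widetilde{QNOI_0})$ for $\mathbf{A}^{L}$ together with the standard operator-ideal inequality $\|B\| \leq \mathbf{A}(B) \leq 1$ yields $\mathbf{A}^{L}(B \circ (g \boxtimes e)) \leq Lip(g)\|e\|$, so supremising over $B$ gives $\mathbf{A}^{-1} \circ \mathbf{A}^{L}(g \boxtimes e) \leq Lip(g)\|e\|$. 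For (3), given $A \in Lip(X_{0}, X)$, $T \in \mathfrak{A}^{-1} \circ \textfrak{A}^{L}(X, F)$, $B \in \mathfrak{L}(F, F_{0})$ and any admissible $\widetilde{B} \in \mathfrak{A}(F_{0}, G)$ with $\mathbf{A}(\widetilde{B}) \leq 1$, the composition $\widetilde{B}B$ lies in $\mathfrak{A}(F, G)$ with $\mathbf{A}(\widetilde{B}B) \leq \|B\|$. Normalising by $\|B\|$ (trivial if $B = 0$) and combining the definition of $\mathbf{A}^{-1} \circ \mathbf{A}^{L}(T)$ with $\bf (\widetilde{QNOI_2})$ for $\mathbf{A}^{L}$ yields $\mathbf{A}^{L}(\widetilde{B}BTA) \leq \|B\|\,\mathbf{A}^{-1} \circ \mathbf{A}^{L}(T)\,Lip(A)$, and the supremum over $\widetilde{B}$ closes the estimate.

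Hypothesis (2), the $r$-summability together with completeness, is the main step. Let $S_{1}, S_{2}, \ldots \in \mathfrak{A}^{-1} \circ \textfrak{A}^{L}(X, F)$ satisfy $\sigma := \sum_{n}\left[\mathbf{A}^{-1} \circ \mathbf{A}^{L}(S_{n})\right]^{r} < \infty$. Running the supremum argument with rank-one operators $y^{*} \in B_{F^{*}}$ viewed as elements of $\mathfrak{A}(F, \mathbb{K})$ with $\mathbf{A}(y^{*}) = \|y^{*}\| \leq 1$ gives $Lip(S_{n}) \leq \mathbf{A}^{-1} \circ \mathbf{A}^{L}(S_{n})$; since $r \leq 1$ forces $r$-summability of nonnegative sequences to dominate $1$-summability, $\sum_{n}Lip(S_{n}) < \infty$, and therefore $S := \sum_{n}S_{n}$ converges in the Banach space $Lip(X, F)$. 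For each $B \in \mathfrak{A}(F, F_{0})$ with $\mathbf{A}(B) \leq 1$, the embedding $\ell_{r} \hookrightarrow \ell_{q}$ furnishes $\sum_{n}\mathbf{A}^{L}(B \circ S_{n})^{q} \leq \sum_{n}\left[\mathbf{A}^{-1} \circ \mathbf{A}^{L}(S_{n})\right]^{q} \leq \sigma^{q/r}$, so the $q$-Banach property of $\left[\textfrak{A}^{L}, \mathbf{A}^{L}\right]$ provides a $\mathbf{A}^{L}$-limit of $\sum_{n}B \circ S_{n}$, which by continuity of $B \circ (\cdot)$ in Lipschitz norm coincides with $B \circ S$. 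Thus $B \circ S \in \textfrak{A}^{L}(X, F_{0})$ with $\mathbf{A}^{L}(B \circ S)^{q} \leq \sum_{n}\left[\mathbf{A}^{-1} \circ \mathbf{A}^{L}(S_{n})\right]^{q}$. Taking the supremum over $B$, raising to the $r/q$-th power and invoking scalar sub-additivity yields $\left[\mathbf{A}^{-1} \circ \mathbf{A}^{L}(S)\right]^{r} \leq \sigma$, as required.

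The main obstacle is the exponent bookkeeping: unlike the product construction, the quotient norm uses no Hölder pairing, so the promised $r$-norm structure must be wrung out of $\mathbf{A}^{L}$'s intrinsic $q$-norm purely through $\ell_{r} \hookrightarrow \ell_{q}$. A secondary subtlety is making sense of the limit $S$ before the quotient space is known to be complete; this is sidestepped by first establishing ordinary Lipschitz convergence of $\sum_{n}S_{n}$ via the inequality $Lip \leq \mathbf{A}^{-1} \circ \mathbf{A}^{L}$ (proved exactly as in the earlier proposition for $p$-normed ideals) and only afterwards identifying $B \circ S$ with the $\mathbf{A}^{L}$-limit of $\sum_{n}B \circ S_{n}$ for each admissible $B$.
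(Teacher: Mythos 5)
Your route through Proposition \ref{Auto12} is a sensible way to organize the proof, and your verifications of hypotheses (1), (3), and the exponent bookkeeping in (2) via $\ell_{r}\hookrightarrow\ell_{q}$ are essentially correct. However, the argument has a genuine gap: you never establish that $\mathbf{A^{-1}}\circ\mathbf{A}^{L}(T)$ is \emph{finite} for an arbitrary $T\in\mathfrak{A^{-1}}\circ\textfrak{A}^{L}(X,F)$. Proposition \ref{Auto12} is stated for an $\mathbb{R}^{+}$-valued function, but the quotient quantity is a supremum $\sup\left\{\mathbf{A}^{L}(B\circ T):\mathbf{A}(B)\leq 1\right\}$ over $B\in\mathfrak{A}(F,F_{0})$ with $F_{0}$ ranging over all Banach spaces, and nothing in the definition of the quotient class (namely $B\circ T\in\textfrak{A}^{L}$ for all $B\in\mathfrak{A}$) forces this supremum to be finite. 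Contrast the product: there the norm is an infimum over a nonempty set of nonnegative numbers and is automatically finite, so no such step is required. Proposition \ref{Auto13} is purely qualitative and does not supply the bound either, and your hypothesis (2) treats only series whose quotient norms are already assumed $r$-summable, so it cannot close this hole.

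This finiteness is precisely what the paper's proof is about; the paper explicitly calls it ``the main point'' and then dismisses the remaining verifications as ``easy to check.'' The paper argues by contradiction: if the supremum were infinite for some $T$, pick $B_{n}\in\mathfrak{A}(F,F_{n})$ with $\mathbf{A}(B_{n})\leq 2^{-n}$ and $\mathbf{A}^{L}(B_{n}\circ T)\geq n$, set $B:=\sum_{n}J_{n}B_{n}\in\mathfrak{A}\left(F,\ell_{2}(F_{n})\right)$ (the partial sums form an $\mathbf{A}$-Cauchy sequence, so the series converges once $\left[\mathfrak{A},\mathbf{A}\right]$ is taken to be $p$-Banach), and then read off $n\leq\mathbf{A}^{L}(B_{n}\circ T)=\mathbf{A}^{L}(Q_{n}BT)\leq\mathbf{A}^{L}(BT)$ for every $n$, which is absurd. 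You need to prepend this (or an equivalent uniform-boundedness argument) before invoking Proposition \ref{Auto12}; once that is in place, the rest of your verification goes through and usefully fleshes out what the paper leaves implicit.
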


\begin{proof}
The main point is to establish the existence of $\mathbf{A^{-1}}\circ\mathbf{A}^{L}$. Therefore let us suppose that the supremum is not finite for some Lipschitz operator $T\in\mathfrak{A^{-1}}\circ\textfrak{A}^{L}(X, F)$. Then we can find $B_{n}\in\mathfrak{A}(F,F_{n})$ such that $$\mathbf{A}(B_{n})\leq (2)^{-n}\ \ \  and \ \ \ \mathbf{A}^{L}(B_{n}\circ T)\geq n \ \ for \ \ \  n=1,2,3,\ldots$$

Put $F_{0}:=\ell_{2}(F_{n})$. Since $$\mathbf{A}\left(\sum\limits_{n=h+1}^{k} J_{n} B_{n}\right)\leq\sum\limits_{j=1}^{\infty} \mathbf{A}(B_{h+j})\leq (2)^{-h},$$ 
the partial sums $\sum\limits_{n=1}^{k} J_{n} B_{n}$ form an $\mathbf{A}-$Cauchy sequence. Consequently $B:=\sum\limits_{n=1}^{\infty} J_{n} B_{n}$ belongs to $\mathfrak{A}$, and we obtain $n\leq\mathbf{A}^{L}(B_{n}\circ T)=\mathbf{A}^{L}(Q_{n} B T)\leq \mathbf{A}^{L}(B T)$, which is a contradiction. 

Finally, it is easy to check  the  nonlinear ideal properties and the completeness, as well. \\
\end{proof}




\begin{thebibliography}{99}   
   
\setlength{\parsep }{-0.5ex}          
\setlength{\itemsep}{-0.5ex}          
          
\frenchspacing           
          
\bibitem {MJAM14}
M. G. Cabrera--Padilla, J. A. Ch{\'a}vez-Dom{\'{\i}}nguez, A. Jimenez--Vargas and M. Villegas--Vallecillos,
\emph{Lipschitz Tensor Product}, submitted.
          
\bibitem {P78}
A. Pietsch,
\emph{Operator Ideals},
Deutsch. Verlag Wiss., Berlin, 1978;
North--Holland, Amsterdam--London--New York--Tokyo, 1980.
\bibitem {P07}
A. Pietsch,
\emph{History of Banach Spaces and Linear Operators},
Birkha\''user Boston, 2007;
North--Holland, Amsterdam--London--New York--Tokyo, 1980.
\bibitem {JAJM14}
A. Jim{\'e}nez-Vargas, J. M. Sepulcre, and Mois{\'e}s Villegas-Vallecillos,
\emph{Lipschitz compact operators},
J. Math. Anal. Appl., 415 (2014), no. 2, 889--901.


\bibitem {P87}  
A. Pietsch,
\emph{ Eigenvalues and $s$-Numbers},
Geest\,\&\,Portig, Leipzig, and Cambridge Univ. Press, 1987.

\bibitem {Mass15}
M. A. S. Saleh,
\emph{ New types of Lipschitz summing maps between metric spaces}, Mathematische Nachrichten.
\end{thebibliography}
\end{document}